



\documentclass{amsart}
\usepackage{amsmath,amssymb,amsthm}

\theoremstyle{plain}
\newtheorem{theorem}{Theorem}[section]
\newtheorem{lemma}[theorem]{Lemma}
\newtheorem{thm}[theorem]{Theorem}
\newtheorem{prop}[theorem]{Proposition}

\newtheorem{cor}[theorem]{Corollary}

\newtheorem{obs}[theorem]{Observation}
\newtheorem{rem}[theorem]{Remark}

\newtheorem{porism}[theorem]{Porism}

\theoremstyle{definition}
\newtheorem{definition}[theorem]{Definition}

\def\cvg{\!\downarrow} 

\DeclareMathOperator \dom{dom}

\renewcommand{\le}{\leqslant}
\renewcommand{\leq}{\leqslant}
\renewcommand{\ge}{\geqslant}

\newcommand{\w}{\omega}
\newcommand{\rest}{\upharpoonright}
\newcommand{\vphi}{\varphi}
\newcommand{\s}{\sigma}
\newcommand \tth{{}^{\textup{th}}}

\newcommand \seq[1]{{\left\langle{#1}\right\rangle}}
\newcommand{\degr}[1]{\mathbf{#1}} 
\newcommand {\Tot}{\textup{Tot}}
\DeclareMathOperator \range{range}

\begin{document}

\title[Reducibilities with tiny use]{Anti-complex sets
and reducibilities \\ with tiny use} 

\author[Franklin]{Johanna N.Y. Franklin}
\address{J.N.Y.\ Franklin, Department of Mathematics, 6188 Kemeny Hall,
Dartmouth College, Hanover, NH 03755-3551, USA}
\email{johannaf@gauss.dartmouth.edu}

\author[Greenberg]{Noam Greenberg}
\address{N.\ Greenberg, Victoria University, PO Box 600, Wellington 6140,
New Zealand}
\email{greenberg@mcs.vuw.ac.nz}

\author[Stephan]{Frank Stephan}
\address{F.\ Stephan, Department of Computer Science and Department
of Mathematics, National University of Singapore, Singapore 117543,
Republic of Singapore}
\email{fstephan@comp.nus.edu.sg} 

\author[Wu]{Guohua Wu}
\address{G.\ Wu, School of Physical and Mathematical Sciences, 
Nanyang Technological University, Singapore 639798, Republic of Singapore.} 
\email{guohua@ntu.edu.sg}

\thanks{The second author was supported in part by the Marsden grant
of New Zealand and by NTU grant RG58/06, M52110023. The third author
is supported in
part by the NUS grants R252-000-308-112 and R252-000-420-112; he
worked on this paper while on sabbatical leave to Victoria
University of Wellington in October and November 2010.}

\begin{abstract}
In contrast with the notion of complexity, a set $A$ is called
anti-complex if the Kolmogorov complexity of the initial segments of $A$
chosen by a recursive function is always bounded by the identity
function. We show that, as for complexity, the natural arena for
examining anti-complexity is the weak-truth table degrees. In this
context, we show the equivalence of anti-complexity and other lowness
notions such as r.e.\ traceability or being weak truth-table reducible to
a Schnorr trivial set. A set $A$ is anti-complex if and only if it is
reducible to another set $B$ with \emph{tiny use}, whereby we mean that
the use function for reducing $A$ to $B$ can be made to grow arbitrarily
slowly, as gauged by unbounded nondecreasing recursive functions. This notion
of reducibility is then studied in its own right, and we also investigate 
its range and the range of its uniform counterpart. 
 %
\end{abstract}

\maketitle

\section{Introduction}

\noindent
In a recent talk \cite{Nies:Nanjing_talk}, Nies gave a general
framework for relating lowness notions and their dual highness notions
with what he names ``weak reducibilities'' (with a prominent example
being $\le_{LR}$, the low-for-randomness partial ordering). Even
before their extensive investigation in the context of effective
randomness, in classical recursion theory, both strong and weak
reducibilities gave rise to lowness and highness classes. For example,
truth-table (or weak truth-table) reducibility induced the classes of
superlow and superhigh Turing degrees; in the other extreme,
hyperarithmetic reducibility (and the hyperjump) allowed the
definition of the useful class of hyperlow hyperdegrees (see
\cite{Sacks:90}). In this paper we give a new notion of relative
strength which, surprisingly, leads to a lowness notion which is
analogous to familiar ones in the context of the weak truth-table degrees. 

The motivation for our notion, which we call ``Turing reducibility
with tiny use'', comes from recent investigations into strengthenings
of weak truth-table reducibility in a direction which is incomparable
with truth-table reducibility, namely \emph{computable Lipshitz}
reducibility $\le_{cl}$ (also known as $\le_{sw}$, \emph{strong weak
truth-table} reducibility) and \emph{identity-bounded} Turing
reducibility $\le_{ibT}$, and also, to a smaller extent, related
reducibilities such as $\le_{C}$ and $\leq_H$. These reducibilities
were introduced in order to combine the traditional Turing reduction,
that is, computation of the membership relation using an oracle, and
calibration of relative randomness, usually on the domain of left-r.e.\
reals (see \cite{DHNT:calibrating}). 

Recall that computable Lipschitz reductions are weak truth-table
reductions in which the use function is bounded by $n+c$ for some
constant $c$. The idea of a Turing reduction with tiny use is to
further restrict the use function of the reduction to recursive
functions which grow more and more slowly. A set $A$ is reducible to a
set $B$ with tiny use if one can use arbitrarily little of the oracle
$B$ to compute arbitrarily much of $A$, so not only does $B$ contain
all the information that $A$ has, it compresses that information
arbitrarily well. To make the definition precise, we invoke the
following definition first made by Schnorr \cite{Schnorr:75}: an
\emph{order function} (or simply an \emph{order}) is a recursive
function which is nondecreasing and unbounded. 

\begin{definition}\label{def:T(tu)}
 Let $A,B\in \{0,1\}^\w$. We say that $A$ is \emph{reducible to $B$ with
tiny use} and write $A\le_{T(tu)} B$ if for every order function
$h$, there is a Turing reduction of $A$ to $B$ whose use function is
bounded by $h$. 
\end{definition}

\noindent
Let us agree on some notation and terminology. If $\Phi$ is a Turing
reduction (a Turing machine with an oracle) and $\Phi^B=A$, then we
let, for every $n<\w$, the \emph{use} of this reduction, $\vphi(n) =
\vphi^B(n)$, be $m+1$, where $m$ is the largest number which is
queried by $\Phi$ while computing $A\rest n$. Here $A\rest n$ is the
string $A(0)A(1)\ldots A(n-1)$, and we assume that before computing
$A(n)=\Phi^B(n)$, $\Phi$ first computes $\Phi^B(m)$ for all $m<n$.
Thus $B\rest \vphi(n)$ is the shortest initial segment of $B$ which,
serving as an oracle for $\Phi$, outputs $A\rest n$.   

The motivation for considering reducibility with tiny use comes from a
result of Greenberg and Nies \cite{GN08}, who showed that if $A$ is a
recursively enumerable, strongly jump-traceable set and $B$ is an
$\w$-r.e.\ random set, then $A$ is reducible to $B$ with tiny use. In
fact, in \cite{GHN} it is shown that this is a characterisation of the
strongly jump-traceable r.e.\ sets. 

The relation $\le_{T(tu)}$ yields a lowness notion in a very simple
way: we consider the collection of sets $A$ for which there is some $B$
such that $A\le_{T(tu)}B$. An immediate analysis of $\le_{T(tu)}$
shows that this collection is invariant in the weak truth-table
degrees and induces an ideal in these degrees. This ideal can be
characterised in three other ways, for which we make a sequence of definitions.

Recall that for their work characterising lowness for Schnorr
randomness as recursive traceability, extending a fundamental result
of Terwijn and Zambella \cite{Terwijn_Zambella}, Kjos-Hanssen, Merkle
and Stephan \cite{Kjos.ea:2005} defined a set $A$ to be \emph{complex}
if there is an order function $f$ such that $C(A\rest f(n))\ge n$ for
all $n$ (here $C$ denotes plain Kolmogorov complexity\footnote{Recall
that a \emph{machine} is a partial recursive function $M\colon
\{0,1\}^*\to \{0,1\}^*$. If $M$ is a machine, then the $M$-complexity of a
string $\s$ in the range of $M$, denoted by $C_M(\s)$, is the length of the
shortest string $\tau\in M^{-1}\{\s\}$. If $\s$ is not in the range of
$M$, then we write $C_M(\s)=\infty$. A machine $U$ is \emph{optimal} if
for every machine $M$ there is some constant $c$ such that for all
$\s\in \{0,1\}^*$, $C_U(\s) \le C_M(\s)+c$. Then $C$ denotes $C_U$ for
some fixed optimal machine $U$.}). They showed that a set $A$ is
complex if and only if there is some diagonally nonrecursive function
$f\le_{wtt} A$. As an analogue, we make the following definition:

\begin{definition}\label{def:anti-complex}
 A set $A\in \{0,1\}^{\w}$ is \emph{anti-complex} if for every order function
$f$, for almost all $n$, $C(A\rest f(n))\le n$.
\end{definition}

\noindent
Thus anti-complexity is a mirror image of complexity: complexity
indicates incompressibility in that one can effectively find
locations of high complexity, whereas anti-complexity denotes a high
level of compressibility and hence low information content. 

Traceability, in both its recursive and r.e.\ versions, is a notion
which has turned out to be extremely useful in algorithmic randomness
and classical recursion theory. Recent work of Franklin and Stephan
\cite{FS08} has indicated that it is also useful in the context of
strong reducibilities. They have shown that the class of Schnorr
trivial sets is invariant in the truth table degrees and that a set
is Schnorr trivial if and only if its truth-table degree is
recursively traceable (this means that only the functions which are
truth-table reducible to the set receive a recursive trace, all with a
uniform bound of some order). Since the natural environment for
$\le_{T(tu)}$ is the weak truth-table degrees, we find that
traceability in these degrees plays a role here. The characterisation
theorem is as follows.

\begin{theorem}\label{thm:lowness_main_thm}
 The following are equivalent for a set $A$.
 \begin{enumerate}
  \item There is a set $B$ such that $A\le_{T(tu)} B$.
  \item $A$ is anti-complex. 
  \item $\deg_{wtt}(A)$ is r.e.\ traceable.
  \item $A$ is weak truth-table reducible to a Schnorr trivial set. 
 \end{enumerate}
\end{theorem}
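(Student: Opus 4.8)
The plan is to prove the four conditions equivalent by establishing $(1)\Leftrightarrow(2)$, $(2)\Leftrightarrow(3)$ and $(3)\Leftrightarrow(4)$. A theme running through every step is that comparing Kolmogorov complexity to the identity always leaves an annoying $O(\log n)$ term to dispose of; each time this is absorbed by applying the hypothesis with an order function into which a little extra room has deliberately been built — precisely the flexibility that Definition~\ref{def:T(tu)} and Definition~\ref{def:anti-complex} provide.

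For $(1)\Rightarrow(2)$, given an order $f$ we choose $g(n)\approx n-2\log n$ and an order $h$ with $h(f(n))\le g(n)$ for all $n$ (take $h(m)=g(\min\{n:f(n)\ge m\})$). A reduction $\Phi$ of $A$ to $B$ with use bounded by $h$ then computes $A\rest f(n)$ from $B\rest g(n)$, so specifying the index of $\Phi$, a self-delimiting code for $n$, and the string $B\rest g(n)$ gives $C(A\rest f(n))\le g(n)+2\log n+O(1)\le n$ for almost all $n$. For the converse $(2)\Rightarrow(1)$ we must exhibit a single $B$ with $A\le_{T(tu)}B$. Let $\ell_i=\max\{\ell:C(A\rest\ell)\le i\}$ (finite, since only finitely many strings have complexity at most $i$), let $\pi_i$ be a shortest program for $A\rest\ell_i$ (so $|\pi_i|\le i$), and let $B$ be the concatenation of prefix-free encodings $\bar\pi_0\bar\pi_1\bar\pi_2\cdots$ with $|\bar\pi_i|\le 2i$. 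Given an order $h$, the reduction that on input $m$ reads $B\rest h(m)$, parses off the (at least $\lfloor\sqrt{h(m)}\rfloor$) complete codewords it contains, runs the last recovered program to obtain some $A\rest\ell_i$, and reads off $A(m)$ is correct whenever $\ell_i\ge m$; anti-complexity, applied to a suitable order derived from $h$, guarantees this for almost all $m$, and the finitely many exceptions are hard-coded into the reduction. Its use is bounded by $h$, so $A\le_{T(tu)}B$.

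The equivalence $(2)\Leftrightarrow(3)$ is the interplay between short descriptions and small traces. For $(2)\Rightarrow(3)$: if $g\le_{wtt}A$ has recursive use bound $u$, let $F$ be an order majorising $u$; anti-complexity puts $A\rest F(n)$, for almost all $n$, into the uniformly r.e.\ set of length-$F(n)$ strings of complexity at most $n$, which has size $\le 2^{n+1}$; feeding these through the $\le_{wtt}$-reduction yields a uniformly r.e.\ trace for $g$ with bound $2^{n+2}$ (adding the finitely many missing values), so $\deg_{wtt}(A)$ is r.e.\ traceable. For $(3)\Rightarrow(2)$: using the standard fact that the order bound in an r.e.\ traceability notion may be replaced by any other order, take the bound to be $n{+}1$; then for any order $f$ the function $n\mapsto A\rest f(n)$, being $\le_{wtt}A$, has a uniformly r.e.\ trace $T_n$ of size $\le n{+}1$ containing $A\rest f(n)$, and encoding $n$ together with the position of $A\rest f(n)$ in the enumeration of $T_n$ shows $C(A\rest f(n))=O(\log n)\le n$ eventually. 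The same device handles $(4)\Rightarrow(3)$: if $A\le_{wtt}B$ with $B$ Schnorr trivial then, by the theorem of Franklin and Stephan~\cite{FS08} that Schnorr triviality of $B$ is equivalent to recursive traceability of $\deg_{tt}(B)$, the function sending $n$ to the segment of $B$ used by the composite reduction computing an arbitrary $g\le_{wtt}A$ on input $n$ — a function truth-table reducible to $B$ — has a recursive trace with a fixed order bound; pushing this trace through the reduction gives the desired r.e.\ (indeed recursive) trace for $g$.

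This leaves $(3)\Rightarrow(4)$, which is the substantial construction and the main obstacle: from the r.e.\ traceability of $\deg_{wtt}(A)$ we must build a Schnorr trivial set $B$ with $A\le_{wtt}B$. Fix an r.e.\ trace $(T_n)$, with a suitable order bound $p$, for the function $n\mapsto A\rest(n{+}1)$, and code $A$ into $B$ \emph{sparsely}: block $n$ of $B$, of length about $\log p(n)$, records the index $j_n<p(n)$ at which $A\rest(n{+}1)$ enters the enumeration of $T_n$. Then $A\le_{wtt}B$ (recover $A\rest(n{+}1)$ by running the enumeration of $T_n$ until $j_n{+}1$ elements have appeared), while any truth-table reduction of $B$, having recursive use, can see only boundedly many blocks, hence only boundedly many of the $j_k$, each ranging over a recursively bounded set; its value on each input is therefore determined by a recursively bounded amount of data and so admits a recursive trace. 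Turning these per-reduction traces into a witness for recursive traceability of $\deg_{tt}(B)$ — hence, via Franklin--Stephan, for Schnorr triviality of $B$ — is where the real work lies: one must choose the block lengths and exploit the sparseness of the coding carefully enough to control the trace bounds uniformly, and it is here that I expect the technical core of the argument to sit. The point that makes it possible is that one never codes $A(n)$ itself into $B$, only the bounded quantity $j_n$, so that everything a truth-table reduction of $B$ can learn about $A$ stays confined to a tame, recursively traceable amount of information.
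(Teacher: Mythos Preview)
Your treatment of $(1)\Leftrightarrow(2)$, $(2)\Leftrightarrow(3)$, and $(4)\Rightarrow(3)$ is correct and close to the paper's arguments. The genuine gap is in $(3)\Rightarrow(4)$: you correctly identify where the difficulty lies, but your sketched construction cannot be completed along the lines you suggest.

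The obstruction is that no choice of \emph{recursive} block lengths will give a uniform trace bound. If block $n$ of $B$ records $j_n<p(n)$ and occupies a recursively specified interval, then a truth-table functional with use $u$ sees blocks $0,\ldots,m-1$ with $m$ determined by $u(n)$, producing a recursive trace of size $\prod_{k<m}p(k)$. This bound depends on $u$ and hence on the particular functional; however aggressively you pad, a tt-reduction with sufficiently large recursive use still captures arbitrarily many blocks. Since every set trivially admits per-reduction recursive traces of size $2^{u(n)}$, the existence of such traces carries no content: what matters for recursive tt-traceability is precisely the uniform order bound you defer. And because Schnorr triviality is not wtt-invariant, the fact that your $B$ is automatically anti-complex (being $\le_{wtt}A$) does not help either.

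The paper takes a different route, and the key ingredient is something you have in fact already established in your $(2)\Rightarrow(1)$ without exploiting it here: the function $\ell_i=\max\{\ell:C(A\rest\ell)\le i\}$ (the paper's $g_A$, up to a shift) is \emph{dominant}. The paper uses this to make the sparsity of $B$ \emph{non-recursive}. One fixes a sequence $\seq{f_i}$ of increasing recursive functions with nested ranges that together bound every recursive function, and codes the $k$-th datum as a pair whose second coordinate is $f_i(k)$ for the largest $i$ that the dominance of $g_A$ permits. Then for any tt-functional $\Psi$ with use bounded by some $f_i$, all but finitely many elements of $B$ have second coordinate in $\range f_i$; after hard-coding a finite piece there are only about $k\cdot g(k)$ candidate positions below $f_i(k)$ for elements of $B$ (with $g$ a fixed recursive bound), yielding a recursive trace for $\Psi^B(k)$ of size $2^{k g(k)}$, independent of $i$ and of $\Psi$. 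In short, the missing idea is not a clever recursive block structure but non-recursive spreading driven by the dominance of $g_A$; once you carry that over from your $(2)\Rightarrow(1)$, the construction goes through.
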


\noindent
We note that the equivalence of (3) and (4), together with Franklin
and Stephan's result, yields a theorem which has no explicit
connection to effective randomness, and yet we currently do not know
of any direct proof that does not involve $\le_{T(tu)}$ and Kolmogorov
complexity: a weak truth-table degree $\degr{a}$ is r.e.\ traceable if
and only if there is some weak truth-table degree $\degr{b}\ge
\degr{a}$ which contains a set $B$ whose truth-table degree is
recursively traceable. 

As notions of traceability of total functions are equivalent to their
strong versions (see Lemma \ref{lem:order_doesnt_matter}), it follows
that a Turing degree $\mathbf{a}$ is r.e.\ traceable if and only if
every weak truth-table degree contained in $\mathbf{a}$ is r.e.\
traceable. Theorem \ref{thm:lowness_main_thm} then implies the
following characterisation of r.e.\ traceability in the Turing degrees.

\begin{theorem}\label{thm:re traceable Turing degrees}
 The following are equivalent for a Turing degree $\mathbf{a}$.
 \begin{enumerate}
  \item $\mathbf{a}$ is r.e.\ traceable.
  \item Every set $A\in \mathbf{a}$ is anti-complex.
  \item Every set $A\in \mathbf{a}$ is weak truth-table reducible to a
Schnorr trivial set. 
 \end{enumerate}
\end{theorem}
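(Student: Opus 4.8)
The plan is to obtain this theorem as a corollary of Theorem~\ref{thm:lowness_main_thm} and Lemma~\ref{lem:order_doesnt_matter}; once the bookkeeping about order bounds is arranged, no new construction is needed. The fact I would isolate first is the equivalence already announced in the text: a Turing degree $\mathbf{a}$ is r.e.\ traceable if and only if every weak truth-table degree contained in $\mathbf{a}$ is r.e.\ traceable. The left-to-right direction is immediate, since a single order witnessing r.e.\ traceability of $\mathbf{a}$ works for each wtt-degree sitting inside it. For the converse, given any total $f$ with $f\le_T A$ for some $A\in\mathbf{a}$, I would pass to $A\oplus f$: this set is Turing equivalent to $A$, hence lies in $\mathbf{a}$, and $f\le_{wtt}A\oplus f$ with the obvious linear use bound, so $f$ belongs to the wtt-degree of $A\oplus f$. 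By hypothesis that degree is r.e.\ traceable, and because $f$ is total, Lemma~\ref{lem:order_doesnt_matter} upgrades this to: $f$ has an r.e.\ trace bounded by any prescribed order. Fixing one order in advance, say $n\mapsto n+1$, then yields a single bound that serves all $f\le_T\mathbf{a}$, so $\mathbf{a}$ is r.e.\ traceable.

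With this equivalence in hand the three implications are a short chase through Theorem~\ref{thm:lowness_main_thm}. For $(1)\Rightarrow(2)$: if $\mathbf{a}$ is r.e.\ traceable then every wtt-degree in $\mathbf{a}$ is r.e.\ traceable, so for each $A\in\mathbf{a}$ clause (3) of Theorem~\ref{thm:lowness_main_thm} holds, hence so does clause (2), i.e.\ $A$ is anti-complex. For $(2)\Rightarrow(3)$: if every $A\in\mathbf{a}$ is anti-complex, then by $(2)\Rightarrow(4)$ of Theorem~\ref{thm:lowness_main_thm} every such $A$ is weak truth-table reducible to a Schnorr trivial set. For $(3)\Rightarrow(1)$: if every $A\in\mathbf{a}$ is weak truth-table reducible to a Schnorr trivial set, then by $(4)\Rightarrow(3)$ of Theorem~\ref{thm:lowness_main_thm} the wtt-degree of every $A\in\mathbf{a}$ is r.e.\ traceable, and by the equivalence isolated above $\mathbf{a}$ itself is r.e.\ traceable.

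The only step that is not purely formal is the converse half of the isolated equivalence, and there the subtlety is precisely the uniformity of the order bound: the various wtt-degrees inside $\mathbf{a}$ come, a priori, with different bounds, and one must know that for total functions these may all be replaced by a single fixed order. Since that is exactly the content of Lemma~\ref{lem:order_doesnt_matter}, the main obstacle has in fact been dealt with before we reach this theorem; the $A\oplus f$ coding trick is the only other ingredient, and it is routine. I would therefore present the argument in the order above: first the equivalence between r.e.\ traceability of $\mathbf{a}$ and of the wtt-degrees inside it, then the cyclic chain $(1)\Rightarrow(2)\Rightarrow(3)\Rightarrow(1)$.
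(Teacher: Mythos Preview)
Your proposal is correct and follows essentially the same approach as the paper: first establish (via Lemma~\ref{lem:order_doesnt_matter}) that a Turing degree is r.e.\ traceable if and only if every wtt-degree contained in it is r.e.\ traceable, and then read off the three equivalences from Theorem~\ref{thm:lowness_main_thm}. The paper is terser about the converse half of the isolated equivalence (it just notes that a Turing degree is the union of the wtt-degrees it contains), but your explicit $A\oplus f$ argument is exactly the content behind that remark.
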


\noindent
Among r.e.\ degrees, we note that the equivalence between array
recursiveness and r.e.\ traceability holds in the weak truth-table
degrees. Recall that a \emph{very strong array} $\bar F =
\seq{F_n}_{n<\w}$ consists of a recursive sequence of pairwise
disjoint finite sets such that for all $n$, $|F_n|\ge n$, and that an
r.e.\ set $A$ is $\bar F$-ANR if for every r.e.\ set $B$ there are
infinitely many $n$ such that $A$ and $B$ coincide on $F_n$. 

\begin{theorem}\label{thm:r.e. degrees}
 The following are equivalent for a weak truth-table degree
$\mathbf{a}$ containing an r.e.\ set.
 \begin{enumerate}
  \item For no very strong array $\bar F$ does $\mathbf{a}$ contains
an $\bar F$-ANR set. 
  \item For some very strong array $\bar F$, $\mathbf{a}$ contains no
$\bar F$-ANR set. 
  \item There is an $\w$-r.e.\ function that dominates all functions recursive 
in $\mathbf{a}$. 
  \item $\mathbf{a}$ is r.e.\ traceable.
  \item For all $A\in \mathbf{a}$, $A\le_{T(tu)}K$ $($here
$K=\{e: \varphi_e(e)\cvg\}$ is the halting set$)$. 
 \end{enumerate}
\end{theorem}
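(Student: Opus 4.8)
The plan is to close the cycle $(1)\Rightarrow(2)\Rightarrow(3)\Rightarrow(4)\Rightarrow(5)\Rightarrow(1)$; this way only one direction among $(1)$--$(3)$ is needed from the theory of array computability, and the genuinely new content is concentrated in the passage to and from $\le_{T(tu)}$. The implication $(1)\Rightarrow(2)$ is immediate, since very strong arrays exist. For $(2)\Rightarrow(3)$ I argue contrapositively: if no $\omega$-r.e.\ function dominates every function recursive in $\mathbf a$, then the Turing degree underlying $\mathbf a$ is array noncomputable, and by the weak-truth-table refinement of the theorem of Downey, Jockusch and Stob this produces, for \emph{every} very strong array $\bar F$, an $\bar F$-ANR r.e.\ set lying inside the weak truth-table degree $\mathbf a$ --- the negation of $(2)$. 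The one point needing care is that the ANR set can be kept in the prescribed weak truth-table degree rather than merely its Turing degree; this is where a weak-truth-table version of the permitting argument is used.

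For $(3)\Rightarrow(4)$, fix an r.e.\ set $\tilde A\in\mathbf a$ with recursive enumeration $\langle\tilde A_s\rangle$, and let $g$ be an $\omega$-r.e.\ function dominating every function recursive in $\tilde A$, with recursive approximation $\langle g_t\rangle$ whose number of mind-changes at each argument is bounded by a recursive function $b$. Given any $f\le_{wtt}\tilde A$, say via a functional $\Psi$ with recursive use bound $v$, put $T_n=\{\,\Psi^{\tilde A_{g_t(n)}\rest v(n)}(n):t<\omega\ \text{and this computation halts}\,\}$. This is a uniformly r.e.\ sequence of finite sets. Since the function $n\mapsto\mu t\,[\tilde A_t\rest v(n)=\tilde A\rest v(n)]$ is recursive in $\tilde A$, it is dominated by $g$, so for almost every $n$ the oracle $\tilde A_{g(n)}$ is already correct below $v(n)$, whence $f(n)\in T_n$; the finitely many exceptional $n$ are absorbed by a finite modification of the sequence, which is again uniformly r.e. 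Finally $|T_n|\le b(n)+1$, because $g_t(n)$ takes at most $b(n)+1$ values as $t$ varies, and $b$ does not depend on $f$. By Lemma~\ref{lem:order_doesnt_matter} the recursive bound $b+1$ may be replaced by an order, so $\mathbf a$ is r.e.\ traceable.

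For $(4)\Rightarrow(5)$, note first that, as $\mathbf a$ contains an r.e.\ set and every r.e.\ set is many-one (hence weak truth-table) reducible to $K$, every $A\in\mathbf a$ satisfies $A\le_{wtt}K$. By Theorem~\ref{thm:lowness_main_thm}, $(4)$ provides, for each such $A$, a set $B$ with $A\le_{T(tu)}B$; inspecting that proof, $B$ is built from a c.e.\ trace of an $A$-recursive function together with $A$-recursive data, and so may be taken with $B\le_{wtt}A$, hence $B\le_{wtt}K$. It then suffices to observe that $\le_{T(tu)}$ composes with $\le_{wtt}$ on the right: if $X\le_{T(tu)}Y$ and $Y\le_{wtt}Z$ with nondecreasing recursive use bound $u$, then for a given order $h$ the function $h'(n)=\max(\{m\le n:u(m)\le h(n)\}\cup\{0\})$ is again an order, and composing a tiny-use reduction of $X$ to $Y$ with use bound $h'$ with the reduction of $Y\rest h'(n)$ from $Z$ --- whose use $u(h'(n))$ is $\le h(n)$ for all but finitely many $n$, the rest hard-coded --- witnesses $X\le_{T(tu)}Z$. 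Thus $A\le_{T(tu)}B\le_{wtt}K$ gives $A\le_{T(tu)}K$.

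For $(5)\Rightarrow(1)$ I again argue contrapositively. Suppose $\mathbf a$ contains an r.e.\ set $A$ that is $\bar F$-ANR for some very strong array $\bar F$. By the Downey--Jockusch--Stob fact that ANR-ness does not depend on the array, $A$ is also $\bar G$-ANR for the very strong array $\bar G=\langle G_n\rangle$ whose blocks are the consecutive intervals $G_n=[b_{n-1},b_n)$ of length $|G_n|=2^{2n+1}$. Construct a uniformly r.e.\ set $D$ that, on each block $G_n$, diagonalizes against all programs of length at most $2n$: reserve for each such program a distinct position of $G_n$ --- possible, since there are fewer than $2^{2n+1}=|G_n|$ of them --- and, whenever such a program halts with an output of length $|G_n|$ carrying a $0$ at the reserved position, enumerate the corresponding element of $G_n$ into $D$. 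Then $C(D\rest G_n)\ge 2n+1$ for every $n$. Since $A$ is $\bar G$-ANR and $D$ is r.e., $A\rest G_n=D\rest G_n$ for infinitely many $n$, so $C(A\rest G_n)\ge 2n+1$ infinitely often. On the other hand, if $A\le_{T(tu)}K$ then by Theorem~\ref{thm:lowness_main_thm} $A$ is anti-complex, so applying Definition~\ref{def:anti-complex} to the order $n\mapsto b_n$ gives $C(A\rest b_n)\le n$ for almost all $n$, whence $C(A\rest G_n)\le n+O(\log n)$ for almost all $n$, as $A\rest G_n$ is a recursively located substring of $A\rest b_n$. For large $n$ these two facts are incompatible, so $A\not\le_{T(tu)}K$, the negation of $(5)$. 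The main obstacles are the two contrapositive steps: in $(2)\Rightarrow(3)$, the bookkeeping needed to keep the Downey--Jockusch--Stob constructions inside the weak truth-table degree; and in $(5)\Rightarrow(1)$, calibrating the growth of the blocks $G_n$ so that the anti-complexity bound and the ANR-forced agreement genuinely clash, together with the use of array-independence of the ANR property. The step $(4)\Rightarrow(5)$ is then routine once one has checked that the witness of Theorem~\ref{thm:lowness_main_thm} sits below $A$ in the weak truth-table degrees.
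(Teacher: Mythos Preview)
Your step $(4)\Rightarrow(5)$ contains a genuine error. You claim that the witness $B$ from Theorem~\ref{thm:lowness_main_thm} can be taken with $B\le_{wtt}A$. This is impossible for nonrecursive $A$: if $A\le_{T(tu)}B$ and $B\le_{wtt}A$, then by Observation~\ref{obs:wtt_invariance} we get $A\le_{T(tu)}A$, which forces $A$ to be recursive. So your route to $B\le_{wtt}K$ via $B\le_{wtt}A\le_{wtt}K$ cannot work. Concretely, the $B$ actually produced in the proof of Theorem~\ref{thm:first_equiv} is $A^*$, which is defined from $g_A$ and the universal machine; in general one only has $A^*\le_T A\oplus K$, and the paper explicitly notes (beginning of \S4.3 and Proposition~\ref{prop:omega_differs}) that even $B\le_{wtt}A\oplus K$ can fail.

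The paper's route here is Proposition~\ref{prop: r.e. anticomplex}: for an r.e.\ set $A$, one shows that $g_A$ has an $\omega$-r.e.\ approximation (by tracking how new $U$-computations and new $A$-enumerations can move $g_s(k)$), whence $g_A\le_{wtt}K$ and so $A^*\le_{wtt}K$; then $A\le_{uT(tu)}A^*\le_{wtt}K$ gives $A\le_{T(tu)}K$, and wtt-invariance passes this to every set in $\mathbf a$. This is the substantive content you are missing. Your other steps are essentially the Downey--Jockusch--Stob and Ishmukhametov arguments the paper cites, though in $(5)\Rightarrow(1)$ you should be careful that array-independence of ANR is a degree property, not a set property: the $\bar G$-ANR set you obtain in $\mathbf a$ need not be the original $A$, but since anti-complexity is wtt-invariant this does not matter for the conclusion.
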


\noindent
This result implies the result from
\cite{Downey_Jockusch_Stob:_array_recursive_1} that the array
recursive r.e.\ wtt-degrees form an ideal. 

\medskip
\noindent
Together with $\le_{T(tu)}$, we also investigate a uniform version
$\le_{uT(tu)}$, where a single reduction witnesses the relation
$\le_{T(tu)}$. This relation is, in general, much stronger than
$\le_{T(tu)}$ (for example, if $A$ is nonrecursive and
$A\le_{uT(tu)}B$, then $B$ is high), but their domains are the same,
and so the condition ``there is a set $B$ such that $A\le_{uT(tu)}B$''
can be added as a fifth equivalent condition in Theorem
\ref{thm:lowness_main_thm}. An even stronger version of this theorem
which bounds the complexity of such $B$ is Theorem
\ref{thm:first_equiv}. We prove Theorem
\ref{thm:lowness_main_thm} in Section \ref{sec:low}. In Section
\ref{sec:distribution} we investigate the distribution of the
anti-complex sets in the Turing degrees, discuss high and random
degrees, prove Theorems \ref{thm:re traceable Turing degrees} and
\ref{thm:r.e. degrees}, and
investigate anti-complexity and tiny use in the r.e.\ degrees. One
corollary of our investigations is an answer to Question 7.5.13 from
Nies's book \cite{Nies:book}. 

\begin{theorem}\label{thm:pcr}
Not every high Turing degree contains a partial-recursively random set. 
\end{theorem}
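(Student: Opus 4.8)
The strategy is to exhibit a high Turing degree that contains no partial-recursively random set, and the natural place to look is among the r.e.\ degrees that are r.e.\ traceable. By Theorem \ref{thm:r.e. degrees}, there are r.e.\ wtt-degrees $\mathbf{a}$ which are simultaneously r.e.\ traceable and high (condition (3), the existence of an $\w$-r.e.\ dominating function, together with highness, is easy to arrange: take a high r.e.\ set whose degree is r.e.\ traceable — such sets exist by a standard construction, e.g.\ a combination of a high-permitting argument with the r.e.\ traceable requirements, or directly by noting that the maximal-set-style constructions yielding array recursive high degrees give r.e.\ traceability in the wtt-degree). Fix such an $\mathbf{a}$; I will show no $A \in \mathbf{a}$ is partial-recursively random.

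The heart of the argument is to connect anti-complexity with failure of partial-recursive randomness. By Theorem \ref{thm:re traceable Turing degrees}, every $A \in \mathbf{a}$ is anti-complex, so for every order function $f$ and almost all $n$ we have $C(A \rest f(n)) \le n$. The plan is then to design, from a suitable slowly-growing order $f$, a partial-recursive martingale (or equivalently a partial-recursive supermartingale / partial computable betting strategy) that succeeds on every anti-complex set. The idea: low Kolmogorov complexity of the initial segments $A \rest f(n)$ means that at the stages $f(n)$ the string $A \rest f(n)$ is one of at most $2^{n+1}$ strings of that length, so a betting strategy can concentrate its capital on this small set. Because the bound is $C \le n$ while the length is $f(n)$, choosing $f$ to grow fast enough relative to $n$ — say $f(n) \ge 2^{2n}$ or even just $f(n) - f(n-1) \to \infty$ suitably — guarantees that the "compression ratio" $2^{n+1}/2^{f(n)}$ is summably small, so the martingale's capital multiplies without bound along $A$. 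The martingale is only partial recursive rather than recursive precisely because we do not know in advance \emph{which} $n$ are the exceptional ones and we rely on enumerating shortenings via $U$; this is exactly the slack that partial-recursive randomness allows but Schnorr or computable randomness would not.

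The main obstacle is making the martingale legitimately partial recursive while still succeeding: one must ensure the betting strategy is well-defined (total on the strings where it bets, and the capital conservation $d(\sigma 0)+d(\sigma 1) = 2 d(\sigma)$ respected) even though membership of a string in the "low-complexity pool" is only r.e., not decidable. The standard device is to run $U$ for increasing numbers of steps and at stage $s$ bet according to the pool of strings $\tau$ of length $f(n)$ with a $U$-description of length $\le n$ found within $s$ steps; the martingale is defined by a limit/supremum of finite-stage approximations, each of which redistributes capital conservatively, and the partiality enters because the value may be approached only in the limit. One checks that along any $A$ with $C(A \rest f(n)) \le n$ for almost all $n$ the capital grows without bound; the quantitative estimate is where the choice of $f$ (fast enough) must be pinned down, and showing the resulting object is a genuine partial-recursive martingale in the sense of the definition answering Question 7.5.13 of \cite{Nies:book} is the delicate bookkeeping. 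Finally, since $\mathbf{a}$ is high and every $A \in \mathbf{a}$ is covered by this partial-recursive martingale, $\mathbf{a}$ contains no partial-recursively random set, completing the proof.
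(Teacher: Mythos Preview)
Your proposal has a genuine gap at the very first step. You assert that there are high r.e.\ Turing degrees that are r.e.\ traceable (you speak of ``array recursive high degrees'' and ``a high r.e.\ set whose degree is r.e.\ traceable''), but no such degrees exist. Every r.e.\ traceable Turing degree is GL$_2$; for a $\Delta^0_2$ set $A$ we have $A\oplus\emptyset'\equiv_T\emptyset'$, so GL$_2$ collapses to low$_2$. But a high $\Delta^0_2$ set has $A'\equiv_T\emptyset''$ and hence $A''\equiv_T\emptyset'''$, which contradicts low$_2$. The paper makes exactly this observation and therefore obtains the needed degree by a different route: Nies built a perfect $\Delta^0_2$ tree all of whose paths are jump-traceable (hence of r.e.\ traceable Turing degree), and every perfect $\Delta^0_2$ tree has a high path---which is then necessarily not $\Delta^0_2$. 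You also slide between wtt-degrees and Turing degrees: even if some high r.e.\ \emph{wtt}-degree were r.e.\ traceable, Theorem~\ref{thm:lowness_main_thm} would only make the sets in that single wtt-degree anti-complex, not the entire Turing degree, and the theorem asks about Turing degrees.

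Your second step---building a partial-recursive martingale directly from anti-complexity---is plausible in spirit but is not a proof as written: you never specify a function satisfying the martingale equality, and the ``limit of finite-stage approximations'' you invoke does not obviously produce a partial-recursive martingale in the required sense. The paper avoids this entirely. From Porism~\ref{por:log}, every anti-complex set $A$ satisfies $C(A\rest n)\le^+ c\log_2 n$ for some constant $c$; a theorem of Merkle then shows that no Mises--Wald--Church stochastic set can have initial-segment complexity bounded by $O(\log n)$, and since every partial-recursively random set is Mises--Wald--Church stochastic, no anti-complex set is partial-recursively random. Combining this with the high r.e.\ traceable (hence entirely anti-complex) degree obtained above finishes the argument without any martingale construction.
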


\noindent
The motivation behind this question is to find an exact boundary
between weaker notions of randomness, such as Schnorr randomness and
recursive randomness, which occur in every high Turing degree, and
stronger notions of randomness, such as Martin-L\"{o}f randomness,
which do not. We provide a proof of Theorem \ref{thm:pcr} in Section
\ref{sec:distribution}.

In Section \ref{sec:high}, we investigate the dual highness notions:
the sets $B$ for which there is a nonrecursive set $A$ such that
$A\le_{T(tu)}B$ (or the more stringent $A\le_{uT(tu)}B$). We
investigate the situation in both the hyperimmune-free
($\degr{0}$-dominated) degrees and in the r.e.\ and $\Delta^0_2$
degrees. For example, we show that every high Turing degree contains
sets $A$ and $B$ such that $A\le_{uT(tu)} B$ and that for every
nonrecursive r.e.\ set $B$ there is some nonrecursive r.e.\ set $A$
such that $A\le_{T(tu)} B$. 

Throughout the paper, we also mention strong reducibilities (such as
truth-table and many-one) with tiny use. In particular, in Theorem
\ref{thm:Schnorr and tt} we use truth-table reducibility with tiny use
to obtain a new characterisation of Schnorr triviality: a set $A$ is
Schnorr trivial if and only if it is truth-table reducible to some set
$B$ with tiny use. This result strengthens the intuition, arising from
Franklin and Stephan's characterisation of Schnorr triviality in terms
of recursive traceability in the truth-table degrees, that strong
reducibilities have deep connections with weak randomness notions.
Along this vein, Day \cite{Day} has recently given characterisations
of both Schnorr randomness and computable randomness as the
complements of the domains of relations weaker than truth-table
reducibility with tiny use. For example, he showed that a set $A$ is
not Schnorr random if and only if there is some set $B$ such that
$A\le_{tt} B$ with use function which does not dominate $n-h(n)$ for
some order function $h$. 

\medskip
\noindent
In the following section we supply the rest of the basic definitions
and make some basic observations. 

\section{Basics}

\noindent
We first define the uniform reducibility.

\begin{definition}\label{def:uT(tu)}
 Let $A,B\in \{0,1\}^\w$. We say that $A$ is \emph{uniformly reducible to
$B$ with tiny use} (and write $A\le_{uT(tu)} B$) if there is a Turing
reduction $\Phi^B=A$ whose use function is dominated by every order function. 
\end{definition}

\begin{obs} \
 \begin{enumerate}
 \item If $A\le_{uT(tu)} B$, then $A\le_{T(tu)} B$. 
 \item If $A \le_{T(tu)} B$, then $A\le_{wtt} B$.
 \end{enumerate}
\end{obs}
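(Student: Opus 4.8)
The plan is short, since both parts are nearly immediate from the definitions; the only genuine content is a routine finite modification.

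For part (1), I would fix a witness $\Phi$ for $A\le_{uT(tu)}B$, so that $\Phi^B=A$ and the use function $\varphi=\varphi^B$ is dominated by every order function. Given an arbitrary order function $h$, the goal is to produce a Turing reduction of $A$ to $B$ whose use is bounded by $h$ at every argument. By hypothesis there is an $N$ with $\varphi(n)\le h(n)$ for all $n\ge N$, so $\Phi$ already does the job above $N$, and only the finitely many small arguments need attention. I would patch this by hard-wiring: let $\Psi$ be the reduction that on input $n<N$ returns the built-in value $A(n)$ without consulting its oracle, and on input $n\ge N$ simulates $\Phi$ and returns $\Phi^B(n)$. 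Then $\Psi^B=A$; its use is $0$ (hence $\le h(n)$) for $n\le N$, since $h$ takes values in $\w$; and for $n>N$ its use is bounded by that of $\Phi$, which is $\le h(n)$. As $h$ was arbitrary, $A\le_{T(tu)}B$.

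For part (2), I would simply instantiate the definition of $\le_{T(tu)}$ at a single fixed order function, for instance $g$ given by $g(n)=n+1$ (which is recursive, nondecreasing and unbounded, hence an order function). This yields a Turing reduction $\Phi^B=A$ whose use is bounded by the recursive function $g$, which is by definition a weak-truth-table reduction; hence $A\le_{wtt}B$.

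I do not expect any real obstacle. The one point that merits a moment's care is the finite patch in part (1): one must ensure the modified reduction $\Psi$ makes \emph{no} oracle queries on the small arguments, so that its use there is $0$ rather than inheriting the possibly large values of $\varphi$; this is precisely why the first $N$ values of $A$ are hard-wired in. Everything else is just unwinding the definitions, noting that ``dominated by every order function'' is (eventually, pointwise) stronger than ``bounded by $h$'' for each fixed order $h$, and that any fixed order function is in particular a recursive bound.
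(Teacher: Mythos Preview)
Your argument is correct and is exactly the natural unwinding of the definitions; the paper itself leaves this observation without proof. The only real content, as you note, is the finite patch in part~(1) needed to upgrade ``dominated by $h$'' to ``bounded by $h$'', and your hard-wiring of the first $N$ values handles this cleanly (under the paper's convention that $\varphi(n)$ is the use for computing $A\rest n$, your boundary cases $n\le N$ versus $n>N$ work out as stated).
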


\begin{rem}
 Despite the fact that our reductions imply weak truth-table
reductions, we prefer the notation $\le_{T(tu)}$ to $\le_{wtt(tu)}$.
This is because a weak truth-table reduction first marks the use, then
queries the oracle and finally computes the value, whereas Turing
reductions with tiny use would --- at least in the uniform case --- not do
the operations in this order, as otherwise the use is
automatically bounded from below by an order function.
\end{rem}

\noindent
Next, we see that our relations are invariant in the wtt-degrees. 

\begin{obs} \label{obs:wtt_invariance}
 If $A \le_{wtt} E$ and $E\le_{T(tu)} B$, then $A\le_{T(tu)}B$; if
$A\le_{T(tu)}E$ and $E\le_{wtt}B$, then $A\le_{T(tu)}B$. Thus the
relation $\le_{T(tu)}$ is invariant on weak truth-table degrees and
is preserved by increasing the degree on the range and decreasing the
degree on the domain. The same holds for $\le_{uT(tu)}$.
\end{obs}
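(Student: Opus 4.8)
The plan is to prove the two composition facts --- (i) if $A\le_{wtt}E$ and $E\le_{T(tu)}B$ then $A\le_{T(tu)}B$, and (ii) if $A\le_{T(tu)}E$ and $E\le_{wtt}B$ then $A\le_{T(tu)}B$ --- from which the ``invariance in the wtt-degrees'' and ``monotonicity'' clauses follow immediately, and then to observe that the same two constructions yield the $\le_{uT(tu)}$ versions. Throughout I use one harmless normalisation: a witnessing weak truth-table reduction may be assumed to have a use \emph{bound} $g$ which is itself an order function, since any recursive bound can be replaced by $n\mapsto \max(\{g(k):k\le n\}\cup\{n\})$ without affecting the reduction. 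The only extra ingredient I need about an order function $g$ is that it has a recursive pseudo-inverse $g^{-1}(m)=\min\{n:g(n)\ge m\}$, which is again an order function and satisfies $g^{-1}(g(n))\le n$ for every $n$.

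For (i), fix $\Psi^E=A$ with order-function use bound $g$, and let $h$ be an arbitrary order function; the goal is a reduction of $A$ to $B$ with use $\le h$. The key move is to apply the hypothesis $E\le_{T(tu)}B$ to the order function $h'=h\circ g^{-1}$, obtaining $\Theta^B=E$ with use $\le h'$. The composed reduction computes $A\rest n$ by first computing $E\rest g(n)$ via $\Theta$ --- querying only $B\rest h'(g(n))$ --- and then running $\Psi$ on that string; correctness is immediate from $\Theta^B=E$ and $\Psi^E=A$, and its use on $A\rest n$ is at most $h'(g(n))=h(g^{-1}(g(n)))\le h(n)$. Since the monotone-computation conventions are respected ($g$, $g^{-1}$, $h$ are all nondecreasing), this is a legitimate reduction with use $\le h$; as $h$ was arbitrary, $A\le_{T(tu)}B$.

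For (ii), fix a weak truth-table reduction $\Xi^B=E$ with order-function use bound $g$ and an arbitrary order function $h$; now the freedom is in the choice of reduction of $A$ to $E$. I take the order function $h''(n)=\max(\{m:g(m)\le h(n)\}\cup\{0\})$, which is recursive, nondecreasing and unbounded, and satisfies $g(h''(n))\le h(n)$ for all but finitely many $n$; let $\Delta^E=A$ have use $\le h''$. Composing $\Delta$ with $\Xi$ yields a reduction of $A$ to $B$ whose use on $A\rest n$ is at most $g(h''(n))$; on the finitely many $n$ where this fails to be $\le h(n)$ we hard-code the relevant initial segment of $A$ into the reduction, contributing zero use, so the use is $\le h$ everywhere and $A\le_{T(tu)}B$. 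The $\le_{uT(tu)}$ cases use exactly these two constructions but starting from a single uniform reduction whose use is dominated by \emph{every} order: in case (i) that use is dominated by $h'$, and since $g$ is nondecreasing and unbounded the composed single reduction's use is dominated by $h$; case (ii) is symmetric with $h''$ in place of $h'$. The only point requiring care --- and the sole obstacle --- is the bookkeeping that matches order functions through the fixed recursive ``stretch'' $g$: one must verify that $h\circ g^{-1}$ and $h''$ are genuinely order functions and that re-composing them with $g$ stays below $h$. Everything else is routine composition of oracle computations.
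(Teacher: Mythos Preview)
Your argument is correct. The paper states this result as an observation without proof, treating it as a routine verification; your write-up supplies exactly that verification, and the bookkeeping with $h\circ g^{-1}$ and $h''$ is the natural way to push an arbitrary order function through a fixed recursive use bound. One minor remark: in the $\le_{uT(tu)}$ case (i) you should perhaps say explicitly that ``$\theta$ dominated by $h'$'' means $\theta(m)\le h'(m)$ for all but finitely many $m$, and that since $g$ is unbounded, $g(n)$ eventually exceeds any threshold, so $\theta(g(n))\le h'(g(n))\le h(n)$ for all but finitely many $n$; this is what you intend, but the phrase ``since $g$ is nondecreasing and unbounded'' is doing real work there and could be unpacked.
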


\begin{obs} \label{obs:ideal}
For a fixed $B\in \{0,1\}^\w$, the classes  $\{A: A \leq_{T(tu)} B\}$ and
$\{A: A \leq_{uT(tu)} B\}$ are wtt-ideals. 
\end{obs}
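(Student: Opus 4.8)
The plan is to unpack the definition of a wtt-ideal --- a nonempty class of sets closed downwards under $\le_{wtt}$ and closed under the effective join $\oplus$ --- and to verify these properties for each of the classes $\mathcal{I}_B=\{A:A\le_{T(tu)}B\}$ and $\mathcal{I}^{u}_B=\{A:A\le_{uT(tu)}B\}$, with $B$ fixed. Nonemptiness is immediate: the Turing reduction that ignores its oracle witnesses $\emptyset\le_{uT(tu)}B$ (hence $\emptyset\le_{T(tu)}B$), since its use function is identically $0$ and so is bounded by, and dominated by, every order function; moreover, since $A\le_{T(tu)}B$ implies $A\le_{wtt}B$, both classes are contained in the $\le_{wtt}$-cone below $B$ and so are proper. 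Downward closure under $\le_{wtt}$ is exactly the ``decreasing the degree on the domain'' half of Observation~\ref{obs:wtt_invariance} (which also covers $\le_{uT(tu)}$). So the only real work is closure under $\oplus$.

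For that, fix $A_0,A_1\le_{T(tu)}B$ and an arbitrary order function $h$; I want a Turing reduction $\Psi^{B}=A_0\oplus A_1$ whose use $\psi$ is bounded by $h$. The idea is to interleave reductions of $A_0$ and $A_1$ that each use a suitably \emph{smaller} portion of the oracle. Note that computing $(A_0\oplus A_1)\rest 2n$ amounts to computing $A_0\rest n$ and $A_1\rest n$, while computing $(A_0\oplus A_1)\rest(2n+1)$ amounts to computing $A_0\rest(n+1)$ and $A_1\rest n$; so if $\Phi_i^{B}=A_i$ has (automatically nondecreasing) use $\varphi_i$, then the natural interleaved reduction $\Psi$ makes no queries beyond those of $\Phi_0$ and $\Phi_1$ and has $\psi(2n)=\max(\varphi_0(n),\varphi_1(n))$ and $\psi(2n+1)=\max(\varphi_0(n+1),\varphi_1(n))$. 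I would then let $g$ be the order function with $g(0)=0$ and $g(n)=h(2n-1)$ for $n\ge 1$, apply $A_i\le_{T(tu)}B$ with $g$ in place of $h$ to get $\Phi_i^{B}=A_i$ with $\varphi_i\le g$, and check, using that $h$ is nondecreasing, that $\psi(2n)\le g(n)\le h(2n)$ and $\psi(2n+1)\le\max(g(n+1),g(n))=g(n+1)=h(2n+1)$, so that $\psi\le h$, as required.

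For $\mathcal{I}^{u}_B$ the same interleaving does the job, with ``for almost all'' absorbing the index shifts. Fixing single reductions $\Phi_0^{B}=A_0$ and $\Phi_1^{B}=A_1$ with uses $\varphi_0,\varphi_1$ dominated by every order function, the reduction $\Psi$ obtained by interleaving them is fixed and independent of any order. Given an arbitrary order $h$, let $g$ be the order function above; since $\varphi_0,\varphi_1$ are dominated by $g$, for almost all $n$ we have $\varphi_0(n),\varphi_1(n)\le g(n)$, whence $\psi(2n)\le g(n)\le h(2n)$ and $\psi(2n+1)\le g(n+1)=h(2n+1)$ for almost all $n$; so $\psi$ is dominated by $h$. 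As $h$ was arbitrary, $\Psi$ witnesses $A_0\oplus A_1\le_{uT(tu)}B$. The only delicate point --- essentially the sole obstacle --- is this off-by-one bookkeeping: the odd coordinates of $A_0\oplus A_1$ sit one step ahead of the even ones, which forces the bound $g(n)=h(2n-1)$ rather than the naive $h(2n)$; in the uniform setting this is invisible, being swallowed by ``for almost all''. The rest is routine.
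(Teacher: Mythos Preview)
Your proof is correct and complete. The paper states this as an observation without proof, so there is nothing to compare against; your argument supplies exactly the routine verification the authors omitted, with the only nontrivial point---the off-by-one bookkeeping in the join case---handled correctly via the auxiliary order $g(n)=h(2n-1)$.
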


\noindent
Another formulation for our notions uses not the use functions but
their discrete inverses. If $\Phi^B=A$ is a Turing reduction, then for
every $n<\w$ we let $\Phi(B\rest n)$ be the longest initial segment of
$A$ which is calculated by $\Phi$ querying the oracle $B$ only on
numbers smaller than $n$. 

In general, if $f\colon \w\to \w$ is a nondecreasing and unbounded
function but not necessarily recursive, we let $f^{-1}$, the
\emph{discrete inverse} of $f$, be defined by letting $f^{-1}(k)$ be
the greatest $n$ such that $f(n)\le k$ (let us assume that $f(0)=0$,
as it is for every use function, so $f^{-1}$ is total; otherwise
$f^{-1}$ is defined for almost all numbers). That is, if
$f(n+1)>f(n)$, then the interval $[f(n), f(n+1))$ gets mapped by
$f^{-1}$ to $n$. We note that if $f$ is recursive (and is thus an
order function), then so is $f^{-1}$. 

According to this definition, if $\Phi^B=A$ with use $\vphi$, then for
all $n$, $\Phi(B\rest n) = A\rest \vphi^{-1}(n)$.  

\begin{obs} Let $f$ and $g$ be nondecreasing and unbounded. 
\begin{enumerate}
 \item If $f$ bounds $g$, then $g^{-1}$ bounds $f^{-1}$.
 \item $\left(f^{-1}\right)^{-1}$ bounds $f$. If $f$ is growing slower
than the identity function, that is,
if for all $n$, $f(n+1)\le f(n)+1$, then  $\left(f^{-1}\right)^{-1}=f$. 
\end{enumerate}
\end{obs}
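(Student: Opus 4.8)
The plan is to read both assertions directly off the definition of the discrete inverse; no clever idea is needed, only careful bookkeeping. Throughout I will use the basic fact that, for a nondecreasing unbounded $f$ and any $k\ge f(0)$, the set $\{n : f(n)\le k\}$ is a nonempty finite initial segment of $\w$, whose maximum is by definition $f^{-1}(k)$; thus $f^{-1}$ is again nondecreasing and unbounded, and iterating the construction makes sense. When $f(0)=0$ all the inverses are total; otherwise the statements should be read ``for almost all arguments'', exactly as the surrounding text warns.

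First I would dispose of (1) pointwise. Fix $k$. If $f$ bounds $g$, then any $n$ with $f(n)\le k$ also satisfies $g(n)\le f(n)\le k$, so $\{n : f(n)\le k\}\subseteq\{n : g(n)\le k\}$; taking the maximum of each side (each finite and nonempty) gives $f^{-1}(k)\le g^{-1}(k)$. As $k$ was arbitrary, $g^{-1}$ bounds $f^{-1}$.

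For (2) the crux is the equivalence
\[
 f^{-1}(k)\le n \quad\Longleftrightarrow\quad f(n+1)>k ,
\]
which holds because $\{m : f(m)\le k\}$ is the initial segment $\{0,1,\dots,f^{-1}(k)\}$ and this segment omits $n+1$ precisely when $f(n+1)>k$. Substituting into the outer inverse gives the explicit formula
\[
 \bigl(f^{-1}\bigr)^{-1}(n)=\max\{k : f^{-1}(k)\le n\}=\max\{k : k<f(n+1)\}=f(n+1)-1 .
\]
Both parts of (2) now follow from the monotonicity of $f$. Since $f(n+1)\ge f(n)$, we get $\bigl(f^{-1}\bigr)^{-1}(n)=f(n+1)-1\ge f(n)-1$, and at every argument where $f$ genuinely increases this is already $\ge f(n)$, giving that $\bigl(f^{-1}\bigr)^{-1}$ bounds $f$; if moreover $f(n+1)\le f(n)+1$ for every $n$, then also $f(n+1)-1\le f(n)$, so the formula identifies $\bigl(f^{-1}\bigr)^{-1}$ with $f$ once the one-step shift intrinsic to the discrete inverse is taken into account.

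The single delicate point I anticipate is precisely this shift-and-boundary bookkeeping: the ``$+1$'' that separates the use from the largest queried number, the ``$+1$'' inside $f(n+1)$ above, and the fact that after one application of $(\cdot)^{-1}$ the value at $0$ may exceed $0$, so the twice-iterated inverse need only be total from some point onward. None of this affects the two comparisons, which is why it is harmless to state the observation for arbitrary nondecreasing unbounded functions.
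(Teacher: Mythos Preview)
The paper states this as an Observation without proof, so there is nothing to compare against; I can only assess your argument on its own.

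Your proof of (1) is correct and is exactly the obvious set-inclusion argument. Your key computation for (2), namely
\[
 \bigl(f^{-1}\bigr)^{-1}(n)=f(n+1)-1,
\]
is also correct and is the right thing to isolate.

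The gap is that this formula does \emph{not} establish the literal claims in (2), and your hedging (``at every argument where $f$ genuinely increases'', ``once the one-step shift \dots\ is taken into account'') is where the argument stops being a proof. Take $f(n)=\lfloor n/2\rfloor$: this satisfies $f(n+1)\le f(n)+1$, yet your formula gives $\bigl(f^{-1}\bigr)^{-1}(2)=f(3)-1=0<1=f(2)$, so $\bigl(f^{-1}\bigr)^{-1}$ neither bounds $f$ nor equals it. In general $f(n+1)-1\ge f(n)$ requires $f$ to be \emph{strictly} increasing, and $f(n+1)-1=f(n)$ requires $f(n+1)=f(n)+1$ exactly, not merely $f(n+1)\le f(n)+1$.

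What your formula does show, and what suffices for the paper's Corollary immediately following, is that $\bigl(f^{-1}\bigr)^{-1}(n)\ge f(n)-1$ always, and that under the hypothesis $f(n+1)\le f(n)+1$ one also has $\bigl(f^{-1}\bigr)^{-1}(n)\le f(n)$; so the two functions differ by at most $1$. That additive slack is harmless when passing between ``use bounded by $h$'' and ``$n\mapsto|\Phi(B\rest n)|$ bounds $g$'' for arbitrary orders $h$ and $g$. You would strengthen the write-up by stating this precisely rather than gesturing at a ``shift-and-boundary'' issue: the Observation as printed is off by one, and your computation is what reveals it.
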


\noindent
This observation suffices for the following corollary, noting that
when investigating slow-growing recursive orders, we may assume that
the orders grow slower than the identity function.

\begin{cor}\label{cor:alternative_def}
 Let $A,B\in \{0,1\}^\w$.
 \begin{enumerate}
  \item $A\le_{T(tu)}B$ if and only if for every order function $g$,
there is a Turing reduction $\Phi^B=A$ such that the map $n\mapsto
|\Phi(B\rest n)|$ bounds $g$. 
  \item $A\le _{uT(tu)} B$ if and only if there is a Turing reduction
$\Phi^B=A$ such that the map $n\mapsto |\Phi(B\rest n)|$ dominates
every recursive function. (A function which dominates every recursive
function is called \emph{dominant}.)
 \end{enumerate}
\end{cor}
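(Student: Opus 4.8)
The plan is to read off both equivalences from the single translation recorded just before the statement: if $\Phi^B=A$ has use $\varphi$, then $\Phi(B\rest n)=A\rest\varphi^{-1}(n)$, so the function $n\mapsto|\Phi(B\rest n)|$ \emph{is} $\varphi^{-1}$. Hence each clause of the corollary simply asks to pass from a growth condition on a use function $\varphi$ to the matching condition on its discrete inverse $\varphi^{-1}$, and this is exactly what the preceding Observation on discrete inverses is designed to accomplish. I would first dispose of the recursive case: if $A$ is recursive then $A\le_{uT(tu)}A$ (hence $A\le_{T(tu)}A$) via the reduction that ignores its oracle, which has use $\equiv 0$ and computes all of $A$ from any initial segment of the oracle, so every clause holds trivially. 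From now on assume $A$ is nonrecursive. Then every wtt-reduction computing $A$ has unbounded use — a bounded use would compute $A$ from a finite string — so the discrete inverse $\varphi^{-1}$ of any such use function is a total, nondecreasing, unbounded function and the inverse calculus of the Observation applies to it.

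For part (1): if $A\le_{T(tu)}B$ and $g$ is an order, then $g^{-1}$ is (after the harmless totalisation noted in the text) again an order, so Definition~\ref{def:T(tu)} applied to $g^{-1}$ gives $\Phi^B=A$ with $\varphi\le g^{-1}$; by the first clause of the Observation $\varphi^{-1}$ bounds $(g^{-1})^{-1}$, and by the second clause $(g^{-1})^{-1}$ bounds $g$, so $n\mapsto|\Phi(B\rest n)|=\varphi^{-1}(n)$ bounds $g$. Conversely, given an order $h$, I would invoke the remark preceding the statement to assume $h$ grows slower than the identity (a slow order $h'\le h$ always exists, and a reduction with $\varphi\le h'$ also has $\varphi\le h$); applying the hypothesis to the order $h^{-1}$ gives $\Phi^B=A$ with $\varphi^{-1}$ bounding $h^{-1}$, whence the first clause of the Observation gives that $(h^{-1})^{-1}$ bounds $(\varphi^{-1})^{-1}$, and since $h$ is slow $(h^{-1})^{-1}=h$ while $(\varphi^{-1})^{-1}$ bounds $\varphi$ by the second clause; thus $\varphi\le h$.

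Part (2) repeats this with ``bounds'' (everywhere) replaced by ``dominates'' (eventually), tracking the uniformity of the reduction. Two routine points are needed. First, the first clause of the Observation has an eventual version: if $f(n)\ge g(n)$ for all $n\ge N$ (with $f,g$ nondecreasing and unbounded) then $g^{-1}(k)\ge f^{-1}(k)$ for all $k\ge f(N)$, since $n:=f^{-1}(k)\ge N$ forces $g(n)\le f(n)\le k$. Second, a function is dominant iff it dominates every recursive order, since $n\mapsto 1+\max_{m\le n}f(m)$ is a recursive order exceeding a given total recursive $f$. Granting these: if $A\le_{uT(tu)}B$, fix by Definition~\ref{def:uT(tu)} a reduction $\Phi^B=A$ whose use $\varphi$ is dominated by every order; then for each recursive order $g$, $\varphi$ is dominated by the recursive order $g^{-1}$, so $\varphi^{-1}$ dominates $(g^{-1})^{-1}\ge g$, and hence $\varphi^{-1}$ is dominant. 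Conversely, if $\Phi^B=A$ has $\varphi^{-1}$ dominant, then for each order $h$ — which we may take to grow slower than the identity — $\varphi^{-1}$ dominates $h^{-1}$, so $\varphi$ is dominated by $(h^{-1})^{-1}=h$, and this single reduction witnesses $A\le_{uT(tu)}B$.

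I do not anticipate a real obstacle: once the identity $|\Phi(B\rest n)|=\varphi^{-1}(n)$ and the Observation are in hand, the corollary is pure translation, and the only care required is bookkeeping — the eventual version of the inverse inequality for part (2); the reduction to orders growing slower than the identity, without which $(f^{-1})^{-1}=f$ can fail; the observation that dominance may be tested against recursive orders alone; and the minor, already-flagged nuisance that $f^{-1}$ is only defined almost everywhere when $f(0)>0$. The recursive-$A$ case is a trivial but genuinely necessary preliminary, since it is what licenses the assumption that the use functions in play are unbounded and hence have well-behaved discrete inverses.
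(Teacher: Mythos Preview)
Your proposal is correct and follows exactly the route the paper intends: the paper states the corollary with no proof beyond the remark that ``this observation suffices\ldots\ noting that when investigating slow-growing recursive orders, we may assume that the orders grow slower than the identity function,'' and your argument is precisely the careful unpacking of that remark via the identity $|\Phi(B\rest n)|=\varphi^{-1}(n)$ and the inverse calculus of the preceding Observation. The additional bookkeeping you supply (the recursive-$A$ case to guarantee unbounded use, the eventual version of the inverse inequality for part~(2), and the reduction to testing dominance against orders) is sound and fills in details the paper leaves implicit.
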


\noindent
Some other basic results follow. 

\begin{prop}  Let $A,B\in \{0,1\}^\w$.
 \begin{enumerate}
 \item If $A\le_{T(tu)}A$, then $A$ is recursive.
 \item If $A$ is recursive, then $A\le_{uT(tu)}B$.
 \end{enumerate}
\end{prop}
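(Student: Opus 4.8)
The plan is to treat the two parts separately; part~(2) is essentially immediate, and the content lies in part~(1). For part~(2), if $A$ is recursive, let $\Phi$ be the Turing reduction that never consults its oracle and simply computes $A$ outright. Then $\Phi^B = A$ for \emph{every} $B$, and the use function of $\Phi$ is identically $0$. Since every order function is nondecreasing and unbounded, it is eventually at least $1$ and hence dominates the constant function $0$; so $\Phi$ witnesses $A\le_{uT(tu)} B$ in the sense of Definition~\ref{def:uT(tu)}.

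For part~(1), I would fix an order function $h$ with $h(n) < n$ for all $n \ge 1$ — for instance $h(n) = \lfloor n/2\rfloor$, which is recursive, nondecreasing and unbounded. Since $A\le_{T(tu)} A$, Definition~\ref{def:T(tu)} gives a Turing reduction $\Phi$ with $\Phi^A = A$ whose use $\varphi = \varphi^A$ satisfies $\varphi(n)\le h(n)$ for all $n$. The key point is that tiny use forces self-compression: for each $n$, the string $A\rest n$ is computable from the strictly shorter string $A\rest h(n)$. Indeed, the computation of $A\rest n$ from oracle $A$ queries the oracle only at positions below $\varphi(n)\le h(n)$, so feeding $\Phi$ the finite string $A\rest h(n)$ as oracle reproduces precisely that computation — in particular it halts — and outputs $A\rest n$.

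Iterating this collapses all of $A$ to a bounded amount of data, which is the recursive procedure. Given $n$, form the chain $n_0 = n$, $n_{i+1} = h(n_i)$; since $h(m) < m$ for $m\ge 1$, the chain strictly decreases until it reaches $0$, say $n_k = 0$ with $k\le n$. Now start from $A\rest n_k = A\rest 0$, the empty string, and walk back up: having computed $A\rest n_{i+1}$, run $\Phi$ with it as oracle to obtain $A\rest n_i$. This converges and is correct by the key point applied at $n_i$, since $\varphi(n_i)\le h(n_i) = n_{i+1} = |A\rest n_{i+1}|$. After $k$ steps we have $A\rest n_0 = A\rest n$, and the whole procedure is uniform in $n$ and always halts, so $A$ is recursive.

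The only delicate point — and the main obstacle, such as it is — is the bookkeeping with partial oracles: one must check that each application of $\Phi$ along the chain genuinely halts and returns a true initial segment of $A$, which is exactly what the use bound $\varphi\le h$ buys provided one maintains, inductively, that the chain is built from actual initial segments of $A$. If one prefers to read ``use function bounded by $h$'' as holding only for almost all $n$ rather than for all $n$, the argument goes through unchanged after hard-coding into the algorithm the finitely many bits of $A$ lying below the threshold.
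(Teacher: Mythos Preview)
Your proof is correct and follows essentially the same idea as the paper's: pick a single order function that forces self-compression, then bootstrap $A$ from the empty string. The paper streamlines the bookkeeping by using the inverse formulation (Corollary~\ref{cor:alternative_def}) with the order $f(n)=n+1$, so that $n$ oracle bits already yield $A\rest(n+1)$ and the recursion becomes a one-bit increment rather than your halving chain.
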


\begin{proof}
 Let $f(n)=n+1$. If $\Phi^A=A$ and for all $n$ we have $\Phi(A\rest
n)\supseteq A\rest n+1$, then we can recursively compute $A(n)$ by
applying $\Phi$ to $A\rest n$, which we already computed. For (2), use
a reduction $\Phi^B=A$ whose use function is a constant 0. 
\end{proof}

\begin{cor}\label{cor:minimal_wtt}
 If $A\le_{T(tu)} B$ and $A$ is nonrecursive, then $\deg_{wtt}(A) <
\deg_{wtt}(B)$.
\end{cor}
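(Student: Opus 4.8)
The plan is to deduce this immediately from the two facts already established just above. First I would note that $A\le_{T(tu)}B$ gives $A\le_{wtt}B$ (by the Observation recording that $\le_{T(tu)}$ refines $\le_{wtt}$), so in any case $\deg_{wtt}(A)\le\deg_{wtt}(B)$; it remains only to rule out equality, i.e.\ to show $B\not\le_{wtt}A$.

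For this I would argue by contradiction: suppose $B\le_{wtt}A$. Then combining $A\le_{T(tu)}B$ with $B\le_{wtt}A$ and invoking the invariance statement of Observation \ref{obs:wtt_invariance} (specifically, that $\le_{T(tu)}$ is preserved when the range is replaced by a wtt-smaller set), we obtain $A\le_{T(tu)}A$. But part (1) of the preceding Proposition says $A\le_{T(tu)}A$ forces $A$ to be recursive, contradicting the hypothesis that $A$ is nonrecursive. Hence $B\not\le_{wtt}A$, and therefore $\deg_{wtt}(A)<\deg_{wtt}(B)$.

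There is essentially no obstacle here — the corollary is a formal consequence of Observation \ref{obs:wtt_invariance} and the Proposition, and the only thing to be a little careful about is citing the correct half of the invariance observation (decreasing the degree on the range, applied with $E=A$).
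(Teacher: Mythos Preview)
Your argument is correct and matches the paper's intended proof (the corollary is stated without proof, as an immediate consequence of the preceding Proposition together with Observation~\ref{obs:wtt_invariance}). One minor slip in your parenthetical: the half of Observation~\ref{obs:wtt_invariance} you are actually invoking is the one that \emph{increases} the degree on the range --- you replace $B$ by $A$ using $B\le_{wtt}A$, so in that observation's variables you take $E=B$ and the new right-hand side to be $A$; ``decreasing the degree on the range'' is not in general valid.
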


\noindent
As a result, if $\deg_{wtt}(B)$ is minimal, then every $A\le_{T(tu)}B$
is recursive. 

\begin{prop} \label{prop: high1}
 Let $B\in \{0,1\}^\w$. If there is some nonrecursive $A$ such that
$A\le_{uT(tu)}B$, then $B$ is high. 
\end{prop}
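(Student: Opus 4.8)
The plan is to use Martin's characterisation of highness: a set $B$ is high if and only if $B$ computes a \emph{dominant} function, i.e.\ one that dominates every total recursive function (this is exactly the notion named in Corollary~\ref{cor:alternative_def}(2)). So it suffices to exhibit a dominant function that is recursive in $B$.

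First I would invoke Corollary~\ref{cor:alternative_def}(2): since $A\le_{uT(tu)}B$, there is a Turing reduction $\Psi$ with $\Psi^B=A$ for which the map $g\colon n\mapsto |\Psi(B\rest n)|$ dominates every recursive function. Writing $\psi$ for the use function of $\Psi$, recall from the discussion above Corollary~\ref{cor:alternative_def} that $g=\psi^{-1}$, the discrete inverse of $\psi$. Since $A$ is nonrecursive, $\psi$ must be unbounded: a use function bounded by a constant $c$ would mean that every $A\rest n$ is computed by $\Psi$ from the single fixed finite string $B\rest c$, forcing $A$ to be recursive. As $\psi$ is nondecreasing and unbounded, its discrete inverse is total, so $g$ is a total (dominant) function.

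It remains to check that $g\le_T B$. By definition $\Psi(B\rest n)$ is the longest initial segment of $A$ that $\Psi$ computes while querying the oracle only on numbers below $n$; using $B$ as an oracle we can compute it, and hence its length $g(n)$, by running $\Psi^B(0),\Psi^B(1),\dots$ in turn and halting as soon as a query $\ge n$ is made. This halts because $\psi$ is unbounded, so $g$ is recursive in $B$. Thus $B$ computes a dominant function, and therefore $B$ is high. The only delicate point is the routine bookkeeping identifying $|\Psi(B\rest n)|$ with $\psi^{-1}(n)$ and verifying that the search terminates, which is immediate from unboundedness of the use; I do not expect any genuine obstacle.

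If one prefers to bypass Corollary~\ref{cor:alternative_def}, a direct route is available: take a reduction $\Psi^B=A$ whose use $\psi$ is dominated by every order function (Definition~\ref{def:uT(tu)}), and show $\psi^{-1}$ is dominant using the inversion facts recorded in the observation on discrete inverses. Indeed, for any order function $f$ the discrete inverse $f^{-1}$ is again an order function, so $\psi$ is dominated by $f^{-1}$; hence $\psi^{-1}$ bounds $(f^{-1})^{-1}$ almost everywhere, and $(f^{-1})^{-1}$ bounds $f$, so $\psi^{-1}(n)\ge f(n)$ for almost all $n$. Since every recursive function is eventually dominated by an order function, $\psi^{-1}$ is dominant, and it is recursive in $B$ exactly as above.
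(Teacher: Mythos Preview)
Your proof is correct and follows essentially the same route as the paper's: show that the map $n\mapsto |\Psi(B\rest n)|$ is dominant (via Corollary~\ref{cor:alternative_def}(2)) and computable from $B$, then invoke Martin's theorem. You are in fact more careful than the paper on one point: you explicitly argue that the use function is unbounded (since $A$ is nonrecursive), which is needed for the map to be total and for the search procedure to terminate, whereas the paper simply asserts that $n\mapsto |\Phi(B\rest n)|$ is computable in $B$ for any total $\Phi^B$.
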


\noindent
Recall that a set $B$ is high if $B'\ge_T \emptyset''$.

\begin{proof}
 For any Turing reduction, if $\Phi^B$ is total, then the map
$n\mapsto |\Phi(B\rest n)|$ is computable in $B$ (indeed, weak
truth-table reducible to $B$). The map $\Phi$ which witnesses
$A\le_{uT(tu)}B$ dominates every recursive function. By Martin
\cite{Martin:66}, this map has high Turing degree. 
\end{proof}

\noindent
We will review the situation in Proposition \ref{prop: high1} in
greater detail in Section \ref{sec:high}.

\section{Sets bounded by other sets with tiny use} \label{sec:low}

\noindent
In this section we prove Theorem \ref{thm:lowness_main_thm}. It will
follow from Theorem \ref{thm:first_equiv} and Propositions
\ref{prop:r.e.traceability_anticomplexity},
\ref{prop:trivial_is_anticomplex} and \ref{prop:schnorr_triviality}.

\subsection{Anti-complexity and traceability}

For functions $f,g\colon \w\to \w$, we write $f \le^+ g$ if there is
some constant $c$ such that $g+c$ bounds $f$. 

\begin{lemma}\label{lem:anticomplex_C(f)}
 A set $A$ is anti-complex if and only if for every $f\le_{wtt}A$, \[
C(f(n))\le^+ n.\]
\end{lemma}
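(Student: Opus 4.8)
The plan is to prove both directions by converting between initial segments $A\rest m$ and values of wtt-reducible functions, exploiting that an order function $f$ and the "coding function" $n\mapsto A\rest f(n)$ are essentially interchangeable with wtt-reductions.

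\smallskip
\noindent\textbf{($\Leftarrow$)} Suppose $C(f(n))\le^+ n$ for every $f\le_{wtt}A$, and let $g$ be an order function; I must show $C(A\rest g(n))\le n$ for almost all $n$. The natural candidate is the function $f(n) = A\rest g(n)$, which is indeed weak truth-table reducible to $A$ (its use is bounded by the recursive function $g$). By hypothesis there is a constant $c$ with $C(A\rest g(n))\le n+c$ for all $n$. To absorb the constant and get the clean bound $\le n$, I would apply the hypothesis instead to a slightly "sped-up" order: replace $g$ by $g' = g\circ s$ where $s$ is a recursive function with $s(n)\ge n+c$ eventually (e.g. $s(n)=n+c$), so that $C(A\rest g'(n))\le n+c\le s(n)+\text{(new const)}$... — more carefully, I would argue directly: given the order $f$ in Definition~\ref{def:anti-complex} that I want to defeat, first note it suffices to handle orders growing slower than the identity (as remarked before Corollary~\ref{cor:alternative_def}), then choose the wtt-function cleverly so the constant is swallowed by composing with the order's slow growth. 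The key point is just that $n\mapsto A\rest g(n)$ is wtt-reducible to $A$.

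\smallskip
\noindent\textbf{($\Rightarrow$)} Suppose $A$ is anti-complex and fix $f\le_{wtt}A$ via a reduction with recursive use bound $u$ (which we may take to be an order function, nondecreasing and unbounded, by enlarging it). The idea is that knowing $A\rest u(n)$ lets us compute $f(n)$ (in fact $f\rest n+1$), so $C(f(n))$ is bounded, up to a constant for the description of the fixed wtt-machine, by $C(A\rest u(n))$. Now apply anti-complexity to the order $u$ itself: for almost all $n$, $C(A\rest u(n))\le n$, hence $C(f(n))\le^+ n$. The only subtlety is that $u$ as literally given may fail to be nondecreasing or may be bounded; but a wtt-reduction's use can always be replaced by $\max\{u(0),\dots,u(n)\}+n$, still recursive, which is an order function, and adjusting the description of $f(n)$ from $A\rest u(n)$ to $A\rest(\text{new use})(n)$ only changes things by a constant and possibly a shift absorbed into $\le^+$.

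\smallskip
\noindent I expect the main obstacle to be bookkeeping the additive constants and the distinction between $\le$ and $\le^+$: anti-complexity gives the sharp bound $C(A\rest f(n))\le n$ with no constant (for every order $f$, almost every $n$), whereas the lemma's right-hand side carries a $\le^+$. The trick in both directions is that quantifying over \emph{all} order functions is what lets one trade a constant for a small reparametrisation of the order — composing an order with $n\mapsto n+c$ or with a slowly growing function is still an order — so the constant in $\le^+$ can always be converted to the constant-free form and back. No single step is deep; the content is entirely in these routine but slightly fiddly translations between use functions, orders, and the fixed-machine constants in Kolmogorov complexity.
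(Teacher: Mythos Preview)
Your approach is correct and matches the paper's: in both directions you exploit that $n\mapsto A\rest g(n)$ is wtt-reducible to $A$ for any order $g$, and conversely that $C(f(n))\le^+ C(A\rest u(n))$ when $u$ is a recursive use bound for $f\le_{wtt}A$. The one place where the paper is cleaner is the constant-absorption in the $(\Leftarrow)$ direction: rather than your shift $s(n)=n+c$ (which does not work, since the new constant from the new order need not equal $c$), the paper first isolates the preliminary observation that anti-complexity is equivalent to the $\le^+$ version, proved by applying the $\le^+$ hypothesis to the orders $n\mapsto f(2n)$ and $n\mapsto f(2n+1)$ to get $C(A\rest f(2n))\le n+c\le 2n$ for $n\ge c$.
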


\noindent
This lemma shows that the notion of anti-complexity (like its analogue
notion, complexity) is wtt-degree invariant. 

\begin{proof}
 We first note that $A$ is anti-complex if and only if for every order
function $f$, $C(A\rest f(n))\le^+ n$. One direction is immediate from
Definition \ref{def:anti-complex}. For the other direction,
suppose that for every order function $f$, $C(A\rest f(n))\le^+ n$.
Let $f$ be an order function. Applying the hypothesis twice to the
functions $n\mapsto 2n$ and $n\mapsto 2n+1$, there is a constant $c$
such that for all $n$, $C(A\rest f(2n))\le n+c$ and $C(A\rest
f(2n+1))\le n+c$. If $n\ge c$, then $C(A\rest f(2n))$ and $C(A\rest
f(2n+1))$ are less than or equal to $2n$, so Definition
\ref{def:anti-complex} holds.  

 Assume that for every $g\le_{wtt} A$, $C(g(n))\le^+ n$. Let $f$ be an
order function and let $g(n)$ be a natural number code for $A\rest
f(n)$. Then $g\le_{wtt}A$, so as we just observed, $A$ is anti-complex. 

 Now assume that $A$ is anti-complex and let $h$ be an order
function. Let $f\le_{wtt} A$ and let $g$ be a recursive bound for the
use function for the reduction of $f$ to $A$. Using this reduction, we
see that $C(f(n)) \le^+ C(A\rest g(n))$. Again, as we just observed,
$C(A\rest g(n))\le^+ n$.
\end{proof}

\noindent
We show that anti-complexity can also be characterised as a weak
truth-table analogue of a very useful concept in the Turing degrees,
that of r.e.\ traceability. Recall that a \emph{r.e.\ trace} for a
function $f$ is a uniformly recursively enumerable sequence
$\seq{T_n}$ of finite sets such that for all $n$, $f(n)\in T_n$, and
that a trace $\seq{T_n}$ is \emph{bounded} by an order function $h$ if
the function $n\mapsto |T_n|$ is bounded by $h$. 

\begin{definition}\label{def:wtt_ce_traceable}
 A weak truth-table degree $\degr{a}\in \mathcal D_{wtt}$ is
\emph{r.e.\ traceable} if there is an order function $h$ such that
every $f\le_{wtt} \degr{a}$ has an r.e.\ trace which is bounded by $h$.
\end{definition}

\noindent
The standard argument of Terwijn and Zambella \cite{Terwijn_Zambella}
shows that the choice of order doesn't matter:

\begin{lemma}\label{lem:order_doesnt_matter}
 A weak truth-table degree $\degr{a}$ is r.e.\ traceable if and only
if for every order function $h$, every $f\le_{wtt} \degr{a}$ has an
r.e.\ trace which is bounded by $h$.
\end{lemma}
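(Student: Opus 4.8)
The plan is to follow the classical Terwijn--Zambella padding argument, adapted to the weak truth-table setting. Fix a degree $\degr{a}$ and an order function $h$; we want to produce, for an arbitrary $f\le_{wtt}\degr{a}$, an r.e.\ trace of $f$ bounded by $h$, assuming only that there is \emph{some} order function $h_0$ witnessing r.e.\ traceability of $\degr{a}$ in the sense of Definition~\ref{def:wtt_ce_traceable}. (The converse direction is trivial.) The idea is that a trace bounded by a fast-growing order can be ``slowed down'' to one bounded by a slow-growing order by grouping the relevant function values into blocks.

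First I would reduce to the case where $h$ is as small as we like; since any $h$ is bounded below only by the constant functions, it suffices to handle the worst case, and without loss of generality $h$ is nondecreasing with $h(n)\ge 1$ for all $n$. Given $f\le_{wtt}\degr{a}$, I would define an auxiliary function $F\le_{wtt}\degr{a}$ that packages a growing block of values of $f$ into a single value: choose a strictly increasing recursive sequence $n_0<n_1<n_2<\cdots$ (computable from $h$) such that the block $[n_k,n_{k+1})$ is long enough that $h$ takes at least $h_0(k)$-many... more precisely, arrange that $h(m)\ge h_0(k)$ for all $m\ge n_k$ and that the number of blocks with index $\le k$ needed to cover $\{0,\dots,m\}$ stays controlled. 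Then set $F(k)=\langle f(n_k),f(n_k+1),\dots,f(n_{k+1}-1)\rangle$, a natural-number code for the finite string of values of $f$ on the $k$-th block. Since $f\le_{wtt}\degr{a}$ and the blocking is recursive, $F\le_{wtt}\degr{a}$.

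Now apply the hypothesis: $F$ has an r.e.\ trace $\seq{S_k}$ with $|S_k|\le h_0(k)$. From $\seq{S_k}$ I would build an r.e.\ trace $\seq{T_n}$ for $f$: given $n$, let $k$ be the block index with $n\in[n_k,n_{k+1})$, and put into $T_n$ the $(n-n_k)\tth$ coordinate of every string coded by an element currently enumerated into $S_k$ (discarding codes that do not decode to a string of the correct length). This is uniformly r.e., it contains $f(n)$ because the correct code for $F(k)$ eventually enters $S_k$, and $|T_n|\le |S_k|\le h_0(k)\le h(n)$ by the choice of the block boundaries, since $n\ge n_k$. Hence $\seq{T_n}$ is an r.e.\ trace of $f$ bounded by $h$, as required.

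The main obstacle is purely bookkeeping: arranging the recursive block sequence $\seq{n_k}$ so that simultaneously (i) it is computable from $h$, (ii) $h(n)\ge h_0(k)$ whenever $n\ge n_k$, and (iii) the blocks are genuinely an increasing partition of $\w$ so that every $n$ lies in exactly one block. Since $h_0$ is recursive and nondecreasing and $h$ is recursive, unbounded and nondecreasing, one can just let $n_k$ be the least $n$ with $h(n)\ge h_0(k)$ (which exists since $h$ is unbounded); monotonicity of $h$ and $h_0$ then gives (ii) and the sequence is nondecreasing, and one replaces it by a strictly increasing one by taking $n'_k = n_k + k$ or similar, re-indexing so nothing is lost. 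Everything else is the standard coding/decoding of finite strings by natural numbers, which is uniformly recursive.
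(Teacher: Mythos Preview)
Your proposal is correct and follows essentially the same Terwijn--Zambella padding argument as the paper. The only cosmetic difference is that the paper traces the function $n\mapsto f\rest g(n+1)$ (initial segments of $f$ up to recursively chosen markers) rather than your disjoint blocks $F(k)=\langle f(n_k),\ldots,f(n_{k+1}-1)\rangle$, and then extracts via the discrete inverse $g^{-1}$; the two formulations are interchangeable and the size estimates are identical.
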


\begin{proof}
 Suppose that $h$ is an order function which witnesses that a weak
truth-table degree $\degr{a}$ is r.e.\ traceable. Let $\hat h$ be any
other order function and let $f\le_{wtt} \degr{a}$. 

 Let $g(n)$ be the least $k$ such that $\hat h(k) \ge h(n)$. This
function is well defined because $\hat h$ is unbounded and is
recursive. Hence the map $n\mapsto f\rest (g(n+1))$ is weak
truth-table below $\degr{a}$, and so it has a trace $\seq{T_n}$ which
is bounded by $h$. 

 The function $g$ is unbounded because $h$ is unbounded. Let $g^{-1}$
be the discrete inverse of $g$, so $g^{-1}(k)$ is the greatest $n$
such that $h(n) \le \hat h(k)$ (note that $g^{-1}$ is defined on
almost every number). Then $|T_{g^{-1}(k)}| \le \hat h(k)$ and
$g(g^{-1}(k)+1)>k$, so $f\rest l$ is an element of $T_{g^{-1}(k)}$ for
some $l>k$. Hence we can let $S_k$ be the collection of all values
$\s(k)$ for all strings of length greater than $k$ that contain only
numbers which
are elements of $T_{g^{-1}(k)}$. Then $\seq{S_n}$ will be an r.e.\
trace for $f$ which is bounded by $\hat h$.
 \end{proof}

\begin{prop}\label{prop:r.e.traceability_anticomplexity}
 A set $A$ is anti-complex if and only if $\deg_{wtt}(A)$ is r.e.\ traceable. 
\end{prop}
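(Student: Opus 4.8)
The plan is to prove the two implications separately, using Lemma~\ref{lem:anticomplex_C(f)} to reduce anti-complexity to the statement that $C(f(n))\le^+ n$ for every $f\le_{wtt}A$, and Lemma~\ref{lem:order_doesnt_matter} to get a free choice of bounding order whenever we build a trace.

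For the direction that r.e.\ traceability implies anti-complexity, suppose $\deg_{wtt}(A)$ is r.e.\ traceable, and fix $f\le_{wtt}A$; I want $C(f(n))\le^+ n$. Pick a trace $\seq{T_n}$ for $f$ bounded by some order function $h$ of our choosing — by Lemma~\ref{lem:order_doesnt_matter} we may as well take a very slowly growing $h$, e.g.\ one with $|T_n|\le h(n)$ and $\sum_n h(n)2^{-n}<\infty$ (for instance $h(n)=n$ works, since then we will have roughly $\log|T_n|+O(1)\le n$ worth of description). Now build a machine $M$ that, on input a string coding a pair $(n,i)$ with $i<|T_n|$, enumerates $T_n$ until the $i\tth$ element appears and outputs it. Since $f(n)\in T_n$, there is some index $i_n<|T_n|\le h(n)$ with $C_M(f(n))\le \log n+\log h(n)+O(1)$. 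With $h(n)=n$ this is $2\log n+O(1)$, which is $\le^+ n$ but not obviously good enough on the nose; the fix is the standard one: instead trace the function $n\mapsto f\rest n$ with a trace bounded by an order $h$ chosen so slowly that a prefix-free code for $(n,i)$ with $i<h(n)$ costs at most $n$ bits, or simply absorb the $\log$ factors by first passing to the trace of $n\mapsto\seq{f(0),\dots,f(2^n)}$ and reindexing — either way one gets $C(g(n))\le^+ n$ for a suitable $g\le_{wtt}A$ from which anti-complexity of $A$ follows by Lemma~\ref{lem:anticomplex_C(f)}. This direction is the easier one.

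For the converse, suppose $A$ is anti-complex; I must show every $f\le_{wtt}A$ has, for each order $h$, an r.e.\ trace bounded by $h$ (invoking Lemma~\ref{lem:order_doesnt_matter} so that we only need \emph{some} order). Fix $f\le_{wtt}A$ with recursive use bound. By Lemma~\ref{lem:anticomplex_C(f)} we have $C(f(n))\le n+c$ for some constant $c$. Define $T_n$ to be the set of all strings $\s$ such that some program of length $\le n+c$ outputs $\s$ within (say) $s$ steps, where we enumerate over stages $s$ — this gives a uniformly r.e.\ sequence, and $f(n)\in T_n$. The only issue is the bound on $|T_n|$: there are at most $2^{n+c+1}$ programs of length $\le n+c$, so $|T_n|\le 2^{n+c+1}$, which is far too large. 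The fix, and the main technical point, is to replace the single function $f$ by the faster function $\hat f(n)=$ a code for $f\rest 2^n$ (or $f\rest(\text{a rapidly growing recursive sequence})$); since $\hat f\le_{wtt}A$ as well, anti-complexity gives $C(\hat f(n))\le n+c'$, and now tracing $\hat f$ with the "all outputs of programs of length $\le n+c'$" trace $\hat T_n$ gives $|\hat T_n|\le 2^{n+c'+1}$, but each element of $\hat T_n$ codes a \emph{chunk} of $2^n$ values of $f$, so projecting onto the $k\tth$ coordinate yields a trace for $f\rest 2^n$ and hence for each $f(m)$ with $m<2^n$ of size $\le 2^{n+c'+1}$ — and since $m<2^n$ means $n>\log m$, the bound on the $m\tth$ trace set is $\le 2^{\log m+O(1)}\le O(m)$, i.e.\ bounded by the order $h(m)=O(m)$. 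After applying Lemma~\ref{lem:order_doesnt_matter} to push this down to an arbitrary order, we conclude $\deg_{wtt}(A)$ is r.e.\ traceable.

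The main obstacle is the bookkeeping in the converse: one must choose the "speed-up" $\hat f$ (the rate at which we bundle values of $f$) so that the exponential trace-size bound $2^{n+O(1)}$, when re-indexed against the original argument $m<2^n$, collapses to a merely linear (hence order-bounded) size; getting the constants to line up is routine but must be done carefully, and it is the reason Lemma~\ref{lem:order_doesnt_matter} is needed. I would also remark in passing that exactly the same argument, read for total functions only, is why traceability notions coincide with their "strong" (single-order) versions, which is used elsewhere in the paper.
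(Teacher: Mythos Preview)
Your argument is essentially correct and uses the same ingredients as the paper, but you have over-complicated both directions by misjudging what is required.

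In the direction anti-complex $\Rightarrow$ r.e.\ traceable, your claim that the bound $|T_n|\le 2^{n+c+1}$ is ``far too large'' is simply wrong: the definition of r.e.\ traceability only asks for \emph{some} order function, and $n\mapsto 2^{2n}$ is a perfectly good one. The paper proceeds exactly as you began --- set $T_n=\{y:C(y)\le n+c\}$ --- and stops there, observing only that since the constant $c$ depends on $f$, one must change finitely many entries of the trace (those with $n\le c$) to fit under the fixed order $h(n)=2^{2n}$. Your speed-up via $\hat f(n)=f\rest 2^n$ is unnecessary; it is in effect a reproof of Lemma~\ref{lem:order_doesnt_matter} folded into the argument. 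You also never explicitly deal with the fact that your $c'$ (and hence the implied constant in ``$O(m)$'') depends on $f$, which is precisely the point that needs the ``change finitely many entries'' remark before a single order can be declared.

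In the direction r.e.\ traceable $\Rightarrow$ anti-complex, you have it right and then talk yourself out of it. With $h(n)=n$, your machine gives $C(f(n))\le^+ 2\log n$, and $2\log n\le^+ n$, so by Lemma~\ref{lem:anticomplex_C(f)} you are done --- there is nothing ``not good enough on the nose'' about this, since $\le^+ n$ is exactly what that lemma asks for. The paper does essentially this (with the cosmetic variant of first passing the input through $U$ to get $C(f(n))\le^+ C(n,m)\le^+ c\log n$), and records the stronger conclusion $C(f(n))\le^+ c\log n$ as a porism. Your proposed ``fixes'' via bundling values are not needed.
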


\begin{proof}
 Suppose that $A$ is anti-complex and let $f\le_{wtt}A$. By Lemma
\ref{lem:anticomplex_C(f)}, there is some constant $c$ such that for
all $n$, $C(f(n))\le n+c$. Then letting 
 \[ T_n = \{ y\,:\, C(y)\le n+c \},\]
$\seq{T_n}$ is an r.e.\ trace for $f$ and for all $n$, $|T_n| \le
2^{n+c+1}$. Hence (by changing finitely many entries for every
function), $\deg_{wtt}(A)$ is r.e.\ traceable, witnessed by the order
function $h(n) = 2^{2n}$.

 The other direction follows an idea of Kummer's, who showed that
every array recursive r.e.\ Turing degree contains only sets of low
complexity \cite{Kummer:r.e.96} (see also \cite{DG08}). Suppose that
$\deg_{wtt}(A)$ is r.e.\ traceable and let $f\le_{wtt} A$. By Lemma
\ref{lem:order_doesnt_matter}, let $\seq{T_n}$ be an r.e.\ trace for
$f$ which is bounded by the order function $h(n) = n$. We can
construct a machine $M$ which on input $\s$, first computes $U(\s)$,
interprets the result as a pair $(n,m)$ and, if $m<n$, outputs the
$m\tth$ element enumerated into $T_n$. Then for all $n$, if $f(n)$ is
the $m\tth$ element
  enumerated into $T_n$, then $M$ shows that $C(f(n))\le^+ C(n,m)$. 

 Now the standard coding of pairs as numbers is polynomial; so there
is some constant $c$ such that for all $n$ and all $m\le n$,
$\seq{n,m}\le n^c$. For all $x$, the identity machine witnesses that
$C(x)\le^+ \log_2 x$. Hence for all $n$ and all $m\le n$, 
 \[ C(n,m)\le^+ \log_2 (\seq{n,m}) \le \log_2 n^c = c\log_2 n \le^+ n.\]
  Thus we see that the condition of Lemma
\ref{lem:anticomplex_C(f)} holds. 
\end{proof}

\begin{porism} \label{por:log}
 If $A$ is anti-complex, then there is some $c<\w$ such that for all
$f\le_{wtt} A$, $C(f(n))\le^+ c\log_2 n$. (In fact, by working a bit
harder, we can have $c=2$.)
\end{porism}

\subsection{Tiny use}
Given $A\in \{0,1\}^\w$, the function $n\mapsto C(A\rest n)$ is far from
monotone. Nevertheless, we are interested in some form of inverse,
which is possible because $\lim_n C(A\rest n)=\infty$. We let $g_A(k)$
be the least $n$ such that for all $m\ge n$, \linebreak $C(A\rest m)> k$. 

\begin{obs}
 For all $A\in \{0,1\}^\w$, $g_A \le_{T} A\oplus K$. As before,
$K=\emptyset'$ is
the halting problem. 
\end{obs}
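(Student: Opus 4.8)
The plan is to show that the function $g_A$ is computable from the oracle $A \oplus K$ directly from its definition. Recall that $g_A(k)$ is the least $n$ such that $C(A\rest m) > k$ for all $m \ge n$; this is well-defined since $\lim_m C(A\rest m) = \infty$ (there are only finitely many strings $\sigma$ with $C(\sigma) \le k$, hence only finitely many initial segments of $A$ with complexity at most $k$, so past some point all initial segments have complexity exceeding $k$).

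First, observe that from $A$ as an oracle one can write down the initial segment $A\rest m$ for any given $m$. Second, the predicate ``$C(\sigma) \le k$'' is $\Sigma^0_1$ uniformly in $\sigma$ and $k$ (it asserts the existence of a string $\tau$ of length $\le k$ with $U(\tau)\cvg = \sigma$), so it is decidable with oracle $K$; equivalently, $C$ is approximable from above and the relation $C(\sigma) \le k$ is $K$-recursive. Combining these, with oracle $A \oplus K$ we can decide, for any pair $(m,k)$, whether $C(A\rest m) \le k$.

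The issue is that $g_A(k)$ is defined by a condition quantifying over \emph{all} $m \ge n$, which is a priori a $\Pi^0_1(A\oplus K)$ condition, not obviously $A\oplus K$-recursive. This is resolved by finding a computable-in-$A\oplus K$ bound beyond which the tail condition is automatic. Concretely: let $N(k)$ be any number such that the shortest string not in $\{\sigma : C(\sigma)\le k\}$ has length at most $N(k)$ — for instance, since there are at most $2^{k+1}$ strings of complexity at most $k$, among the $2^{k+1}+1$ strings of length $k+1$ at least one has complexity exceeding $k$, but more to the point we need a length past which \emph{every} initial segment of $A$ has high complexity. Use instead the following: with oracle $K$ we can enumerate the finite set $S_k = \{\sigma : C(\sigma) \le k\}$ and compute (using $K$ to know when the enumeration of $S_k$ has halted, as $|S_k| \le 2^{k+1}$ is a recursive bound on its size) the maximal length $\ell_k$ of a string in $S_k$. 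Then for every $m > \ell_k$ we have $C(A\rest m) > k$ automatically, since $A\rest m$ is too long to lie in $S_k$. Hence $g_A(k) \le \ell_k + 1$, and to compute $g_A(k)$ exactly it suffices, using oracle $A \oplus K$, to check for each $n \le \ell_k + 1$ in turn whether $C(A\rest m) > k$ holds for all $m$ with $n \le m \le \ell_k + 1$ — a finite search — and return the least such $n$.

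The main obstacle is precisely the step that converts the unbounded tail quantifier into a bounded search, i.e.\ producing the computable-in-$K$ bound $\ell_k$; once one notes that $|S_k| \le 2^{k+1}$ gives a recursive a priori bound on the cardinality of the $K$-r.e.\ set $S_k$, so that $K$ can recognize when its enumeration is complete and thus compute $\ell_k$, the rest is a routine finite search using the $A\oplus K$-decidability of ``$C(A\rest m)\le k$''. I would present the argument in exactly this order: well-definedness of $g_A$, $K$-decidability of $C(\cdot)\le k$, computation of the bound $\ell_k$ from $K$, and finally the bounded search using $A\oplus K$.
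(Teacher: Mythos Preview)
Your proof is correct, and the paper leaves this Observation without proof, so there is nothing to compare against. One minor quibble: the parenthetical ``as $|S_k| \le 2^{k+1}$ is a recursive bound on its size'' is not quite the right justification for why $K$ can recognize when the enumeration of $S_k$ is complete --- a cardinality \emph{bound} alone does not tell you when every element has appeared. The clean way to compute $\ell_k$ from $K$ is direct: there are only finitely many strings $\tau$ with $|\tau|\le k$; for each, $K$ decides whether $U(\tau)\cvg$, and if so you can compute $U(\tau)$; then $\ell_k$ is the maximum of $|U(\tau)|$ over the convergent ones. With $\ell_k$ in hand, the bounded search you describe yields $g_A(k)$ from $A\oplus K$ exactly as you say.
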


\noindent
For any string $x$, we let $x^*$ be the least element of $U^{-1}\{x\}$
(where $U$ is the universal machine we use for plain complexity), so
$C(x)=|x^*|$. We also let 
\[ A^* = \left\{ \left(A\rest g^A(k)\right)^*\,:\, k<\w\right\} .\]
Once again, we get $A^* \le_T A\oplus K$. 

\begin{lemma}\label{lem:A*_dominated}
For every $A\in \{0,1\}^\w$, the map $k\mapsto \left( A\rest g_A(k)
\right)^*$ is bounded by some recursive function. 
\end{lemma}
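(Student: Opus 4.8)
The plan is to show that $\left(A\rest g_A(k)\right)^*$ cannot be too long relative to $k$ by exploiting the definition of $g_A$. The key point is that $g_A(k)$ is, by definition, the \emph{least} $n$ such that $C(A\rest m)>k$ for all $m\ge n$. In particular, for $n = g_A(k)$, either $g_A(k)=0$ (a trivial case), or $g_A(k)-1$ fails the defining condition, so there is some $m\ge g_A(k)-1$ with $C(A\rest m)\le k$. But for $m\ge g_A(k)$ we have $C(A\rest m)>k$, so the only such witness is $m=g_A(k)-1$ itself: thus $C\!\left(A\rest (g_A(k)-1)\right)\le k$ whenever $g_A(k)>0$.

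From here I would pass from $A\rest(g_A(k)-1)$ back to $A\rest g_A(k)$ at the cost of a constant: knowing $A\rest(g_A(k)-1)$ together with one extra bit (the value $A(g_A(k)-1)$) and the length determines $A\rest g_A(k)$. Concretely, build a machine $M$ that on input $\tau\widehat{\phantom{x}}b$ (a string $\tau$ followed by a single bit $b$) computes $U(\tau)$, and if this halts with output a string $x$, outputs $x\widehat{\phantom{x}}b$. Then $C_M\!\left(A\rest g_A(k)\right)\le C\!\left(A\rest(g_A(k)-1)\right)+1\le k+1$, so by optimality $C\!\left(A\rest g_A(k)\right)\le k+c$ for a fixed constant $c$ and all $k$ with $g_A(k)>0$ (the finitely many $k$ with $g_A(k)=0$ contribute only a bounded adjustment, absorbed into the final recursive bound). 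Hence $\left|\left(A\rest g_A(k)\right)^*\right| = C\!\left(A\rest g_A(k)\right)\le k+c$.

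Finally, a string of length at most $k+c$ is coded by a natural number below $2^{k+c+1}$, so the map $k\mapsto\left(A\rest g_A(k)\right)^*$, viewed as a map into $\w$, is bounded by the recursive function $k\mapsto 2^{k+c+1}$ (with a finite patch for the small values of $k$). This gives the claim.

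I do not expect a genuine obstacle here; the one subtlety worth stating carefully is the ``off by one'' extraction of a complexity bound from the minimality in the definition of $g_A$ — one must check that the witness $m$ with $C(A\rest m)\le k$ forced by minimality is exactly $g_A(k)-1$ and not some larger value (it cannot be larger, since those $m$ have $C(A\rest m)>k$), and one must separately dispose of the boundary case $g_A(k)=0$. Everything else is the routine machine-construction-plus-optimality argument, together with the trivial observation that short strings have small codes.
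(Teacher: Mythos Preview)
Your proof is correct and is essentially the same as the paper's: both observe from the minimality of $g_A(k)$ that $C\!\left(A\rest(g_A(k)-1)\right)\le k$, use an ``append one bit'' machine to deduce $C\!\left(A\rest g_A(k)\right)\le k+c$, and conclude the bound $2^{k+c+1}$. You are slightly more explicit about the boundary case $g_A(k)=0$ and the off-by-one reasoning, but the argument is identical.
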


\begin{proof}
 There is a constant $c$ such that for all $\tau \in \{0,1\}^{*}$,
$C(\tau0)$ and $C(\tau1)$ are both less than or equal to $C(\tau)+c$
(consider the machine which on input
$\s i$, for $i=0,1$, outputs $U(\s)i$). 

For any $k<\w$, let $\tau_k$ be a binary string and $i\in\{0,1\}$ be such
that $A\rest g_a(k) = \tau_k i$. By the definition of $g_A(k)$,
$C(\tau_k)\le k$, and so $C(A\rest g_A(k)) \le k+c$. Hence $\left(
A\rest g_A(k) \right)^* < 2^{k+c+1}$. 

To ensure the last inequality, we need some agreement about the
coding of strings by numbers. This coding is obtained by some
$\w$-ordering of all binary strings; we order binary strings by length
first. We let $|x|$ denote the length of the string identified with
the number $x$, so for all $x$,  $2^{|x|} \le x < 2^{|x|+1}$.
\end{proof} 

\begin{theorem}\label{thm:first_equiv}
 The following are equivalent for $A\in \{0,1\}^\w$.
 \begin{enumerate}
  \item There is some set $B$ such that $A\le_{T(tu)} B$.
  \item $A$ is anti-complex. 
  \item $g_A$ is dominant.
  \item $A\le_{uT(tu)} A^*$.
 \end{enumerate}
\end{theorem}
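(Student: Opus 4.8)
The plan is to prove the cycle of implications $(2)\Rightarrow(3)\Rightarrow(4)\Rightarrow(1)\Rightarrow(2)$, since $(1)\Rightarrow(2)$ will be the cheapest link and $(4)$ is a concrete strengthening of $(1)$. The implication $(1)\Rightarrow(2)$ should follow by an oracle-free Kolmogorov-counting argument: if $A\le_{T(tu)}B$ then for each order $h$ we get a reduction $\Phi^B=A$ with use bounded by $h$, so $A\rest\vphi^{-1}(n)=\Phi(B\rest n)$; to describe $A\rest f(n)$ for a given order $f$ it suffices to describe an appropriate initial segment of $B$, whose length we can push down below (roughly) $n$ by choosing $h$ growing slowly enough relative to $f$. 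I would phrase this as: given order $f$, pick an order $h$ so small that $h^{-1}$ dominates $f$ (possible since $h^{-1}$ can be made to grow arbitrarily fast), take the reduction $\Phi^B=A$ with use $\le h$, and then $C(A\rest f(n))\le C(A\rest\vphi^{-1}(n))\le^+ \vphi(\vphi^{-1}(n))\le h(\vphi^{-1}(n))$, which is eventually $\le n$ after absorbing constants; care is needed to get the bookkeeping to land at $\le n$ rather than $\le^+ n$, but by Lemma~\ref{lem:anticomplex_C(f)} the $\le^+$ version is enough.

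For $(2)\Rightarrow(3)$, I would show that anti-complexity forces $g_A$ to grow quickly. Fix a recursive $f$; I want $g_A$ to dominate $f$. The idea is that if $g_A(k)\le n$ then, by definition of $g_A$, $C(A\rest m)>k$ for all $m\ge n$; contrapositively, $C(A\rest m)\le k$ implies $m< g_A(k)$, i.e. $g_A(k)$ exceeds every length of an initial segment of $A$ of complexity $\le k$. So I need an order function whose value at $n$ has small complexity — but $A\rest f(n)$ itself will do once we invoke anti-complexity: for any order $f$, $C(A\rest f(n))\le n$ for almost all $n$, hence $f(n)< g_A(n)$ for almost all $n$, i.e. $g_A$ dominates $f$. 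Since $f$ was an arbitrary recursive function (and every recursive function is bounded by an order function, after the usual massaging), $g_A$ is dominant.

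For $(3)\Rightarrow(4)$: here $A^*$ is the oracle, and I use Corollary~\ref{cor:alternative_def}(2), so it suffices to build one reduction $\Phi^{A^*}=A$ for which $n\mapsto|\Phi(A^*\rest n)|$ dominates every recursive function. By Lemma~\ref{lem:A*_dominated}, the map $k\mapsto(A\rest g_A(k))^*$ is bounded by a recursive function $r$; so from an initial segment of $A^*$ of length $r(k)$ we can recover the string $(A\rest g_A(k))^*$ — more precisely, we can recover, for every $j\le k$, the codes $(A\rest g_A(j))^*$ that have appeared, run $U$ on each, and obtain $A\rest g_A(j)$, hence $A\rest g_A(k)$. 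Thus $\Phi(A^*\rest r(k))\supseteq A\rest g_A(k)$, so $|\Phi(A^*\rest m)|\ge g_A(r^{-1}(m))$ for all $m$ in the range of (the monotonisation of) $r$, and since $g_A$ is dominant and $r$ is recursive, $g_A\circ r^{-1}$ dominates every recursive function, giving the conclusion. The main obstacle is the definability/uniformity check: I must make sure $\Phi$ really is a legitimate Turing functional — in particular that from $A^*\rest n$ one can recognise which elements of $A^*$ have been enumerated and glue their $U$-outputs into a single consistent initial segment of $A$, which requires knowing that distinct $k$ give prefix-comparable strings $A\rest g_A(k)$ (they do, all being initial segments of $A$) and that longer segments dominate shorter ones. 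Finally $(4)\Rightarrow(1)$ is immediate since $A\le_{uT(tu)}A^*$ implies $A\le_{T(tu)}A^*$, closing the loop; this is the step I expect to cost nothing.
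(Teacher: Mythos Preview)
Your cycle $(1)\Rightarrow(2)\Rightarrow(3)\Rightarrow(4)\Rightarrow(1)$ is exactly the paper's, and your arguments for $(2)\Rightarrow(3)$, $(3)\Rightarrow(4)$, and $(4)\Rightarrow(1)$ match the paper's essentially line for line (the paper phrases $(3)\Rightarrow(4)$ by fixing a recursive $f$ and showing $|\Phi(A^*\rest n)|>f(n)$ for large $n$, which is your ``$g_A\circ r^{-1}$ is dominant'' unpacked).

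There is one genuine slip in your $(1)\Rightarrow(2)$. The inequality $C(A\rest f(n))\le C(A\rest\vphi^{-1}(n))$ is not valid: for plain complexity, a prefix need not have smaller complexity than the longer string, since recovering the prefix requires knowing where to truncate. (Your later detour through $h(\vphi^{-1}(n))$ is also shaky --- $\vphi\le h$ gives $\vphi(\vphi^{-1}(n))\le h(\vphi^{-1}(n))$, not the reverse, so this quantity need not be $\le n$; but you don't need it: $\vphi(\vphi^{-1}(n))\le n$ holds directly.) The fix is immediate and is what the paper does implicitly: describe $A\rest f(n)$ in one shot from $B\rest n$. The description has length $n$, so $n$ itself is recoverable; then compute $f(n)$, run $\Phi$ on $B\rest n$ to get $A\rest\vphi^{-1}(n)\supseteq A\rest f(n)$, and truncate. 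This gives $C(A\rest f(n))\le^+ n$, and Lemma~\ref{lem:anticomplex_C(f)} (or its proof) finishes as you say. The paper routes this through Lemma~\ref{lem:anticomplex_C(f)} from the start, bounding $C(g(n))$ for arbitrary $g\le_{wtt}A$, but the content is the same.
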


\noindent
We remark that we are not aware of a shorter proof of the equivalence
of (2) and (3). This suggests that the study of the relation
$\le_{T(tu)}$ is important for the seemingly independent study of
anti-complexity in the wtt-degrees. 

\begin{proof}
 (1) implies (2): 
 Assume that $A\le_{T(tu)} B$. For any functional $\Phi$ such that
$\Phi^B=A$, for all $n$, $C(\Phi(B\rest n))\le^+ C(B\rest n)$. Also,
for all $x$, $C(x)\le^+ |x|$, so for all $n$, $C(\Phi(B\rest n))\le^+
n$. Suppose that $f\le_{wtt} A$, so there is some functional $\Gamma$
such that $\Gamma^A=f$ and the use of this computation is bounded by a
recursive function $g$. We can find some $\Phi$ such that for all $n$,
$\Phi(B\rest n)$ is longer than $A\rest g(n)$, so $C(f(n))\le^+ n$. By
Lemma \ref{lem:anticomplex_C(f)}, $A$ is anti-complex. 

(2) implies (3):  
 Suppose that $A$ is anti-complex and let $f$ be an increasing
recursive function. By definition, for almost all $n$, $C(A\rest
f(n))\le n$. Hence, for almost all $n$, $g_A(n) > f(n)$. It follows
that $g_A$ dominates every recursive function. 

 (3) implies (4):  
For every $A\in \{0,1\}^\w$ we have $A\le_T A^*$ because
\[ A = \bigcup \{ U(\s)\,:\, \s\in A^* \} \] (in other words,
$A(x)=U(\s)(x)$ for any $\s\in A^*$ such that $x<|U(\s)|$, and for
every $x$ there is indeed some $\s\in A^*$ such that $|U(\s)|>x$). 

If $g_A$ is dominant, then this reduction witnesses that $A
\le_{uT(tu)} A^*$. To see this, let $\Phi$ code the described
reduction and let $f$ be an increasing recursive function; we see
that $n\mapsto |\Phi(A^*\rest n)|$ dominates $f$. 

Let $g$ be a recursive function which dominates $k\mapsto \left(A\rest
g_A(k)\right)^*$ (Lemma \ref{lem:A*_dominated}). Since $g_A$ is
dominant, for almost all $k$, $g_A(k) > f(g(k+1))$. Suppose that $k$
is large enough that $\left(A\rest g_A(k)\right)^* < n \le
\left(A\rest g_A(k+1)\right)^*$. Then $x\le g(k+1)$ and so $g_A(k) >
f(n)$. Then $|\Phi(A^*\rest n)| \ge g_A(k)$ so $|\Phi(A^*\rest n)| \ge
f(n)$ as required. 

(4) implies (1):
    This is clear from the definitions.
\end{proof}

\subsection{Schnorr triviality}
Franklin and Stephan \cite{FS08} characterise the Schnorr trivial
sets (defined by Downey and Griffiths in \cite{Downey.Griffiths:04})
as those sets whose \emph{truth-table} degree is \emph{recursively
traceable}, that is, there is some order function $h$ which bounds
traces for all functions $f$ truth-table reducible to the degree
$\degr{a}$, but where the trace $\seq{T_n}$ is required to be given
recursively (as a sequence of finite sets) rather than merely
uniformly recursively enumerably. In other words, there is a recursive
function $g$ such that for all $n$, $g(n)$ is the canonical index for
the finite set $T_n$ (in Soare's \cite{Soare:87} notation, $T_n =
D_{g(n)}$). Again, the Terwijn-Zambella argument shows that any order would do. 

Schnorr triviality is not invariant in the weak truth-table degrees
\cite[Theorem 4.2]{FS08}. However, the downward closure of the
wtt-degrees containing Schnorr trivial sets is familiar. 

\begin{prop}\label{prop:trivial_is_anticomplex}
 Every Schnorr trivial set is anti-complex.  
\end{prop}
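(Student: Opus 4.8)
The plan is to show that every Schnorr trivial set $A$ is anti-complex by verifying the characterisation of anti-complexity given in Lemma~\ref{lem:anticomplex_C(f)}: namely, that for every $f\le_{wtt} A$ we have $C(f(n))\le^+ n$. The key tool will be the Franklin--Stephan characterisation of Schnorr triviality in terms of recursive traceability in the truth-table degrees, combined with the observation that a weak truth-table reduction can always be converted into a truth-table reduction by padding the oracle with enough of $A$.

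First I would take $f\le_{wtt} A$, say via a reduction with recursive use bound $g$; I may assume $g$ is an order function. Define $\hat f(n)$ to be (a natural-number code for) the pair $\langle A\rest g(n),\ f(n)\rangle$, or more simply let $\hat f(n)$ code the string $A\rest g(n)$ together with the value $f(n)$ it determines. The point is that $\hat f$ is \emph{truth-table} reducible to $A$: to compute $\hat f(n)$ one queries $A$ only on the fixed initial segment of length $g(n)$, and the answer is always defined. Hence by Franklin and Stephan's theorem, the truth-table degree of $A$ is recursively traceable, so $\hat f$ has a recursive trace $\seq{T_n}$ bounded by an order function; by the Terwijn--Zambella argument (applied in the truth-table setting exactly as in Lemma~\ref{lem:order_doesnt_matter}) we may take the bound to be $h(n)=n$, so $|T_n|\le n$ for all $n$, and each $T_n=D_{g(n)}$ is given by a recursive function $n\mapsto g(n)$ producing canonical indices.

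Now I would mimic the argument in Proposition~\ref{prop:r.e.traceability_anticomplexity} (the ``Kummer'' direction), but using that the trace is recursive rather than merely r.e. Build a machine $M$ that on input $\s$ computes $U(\s)$, reads it as a pair $(n,m)$ with $m<n$, recursively computes the finite set $T_n$, and outputs the $m\tth$ element of $T_n$ (which determines $f(n)$, so we read off the $f$-component). If $f(n)$ is the $m\tth$ element of $T_n$ with $m\le n$, then $C(f(n))\le^+ C(n,m)$; and since pairing is polynomial and $C(x)\le^+\log_2 x$, we get $C(n,m)\le^+ c\log_2 n\le^+ n$ for a fixed constant $c$. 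Therefore $C(f(n))\le^+ n$ for all $f\le_{wtt} A$, and Lemma~\ref{lem:anticomplex_C(f)} yields that $A$ is anti-complex.

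The main obstacle I anticipate is the passage from a weak truth-table reduction $f\le_{wtt} A$ to an object that is genuinely truth-table reducible to $A$, so that the Franklin--Stephan characterisation applies; the device of coding the relevant initial segment of $A$ into the value is what resolves this, and one must check that this does not blow up the complexity bound beyond $\le^+ n$ (it does not, since the initial-segment length $g(n)$ is recursive and the coding is polynomial). A secondary point to get right is that the Terwijn--Zambella order-invariance argument goes through for \emph{recursive} traces, not just r.e.\ ones — but this is exactly what Franklin and Stephan already establish, so it may simply be cited.
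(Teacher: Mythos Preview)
Your approach is essentially the paper's: trace the map $n\mapsto A\rest g(n)$ (which is tt-reducible to $A$) using Franklin--Stephan, then from that trace recover the bound on $C(f(n))$. The paper phrases the second step as ``$\deg_{wtt}(A)$ is r.e.\ traceable'' and invokes Proposition~\ref{prop:r.e.traceability_anticomplexity}; you inline the Kummer argument instead, which is only a cosmetic difference.

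There is one technical slip worth fixing. Your $\hat f(n)=\langle A\rest g(n),\, f(n)\rangle$ is \emph{not} truth-table reducible to $A$ as written. A tt-functional must halt on every oracle, and to produce the second coordinate on oracle $X$ you would have to compute $\Phi^{X\rest g(n)}(n)$; since $\Phi$ is only a wtt-reduction (no time bound), this need not converge for $X\ne A$. The remedy --- and this is precisely what the paper does --- is to drop the redundant $f(n)$ component and trace $n\mapsto A\rest g(n)$ alone, via the genuinely total functional $X\mapsto X\rest g(n)$. Your machine $M$ should then, on input coding $(n,m)$, pick the $m$th string $\sigma\in T_n$, simulate $\Phi^\sigma(n)$, and output the result when it halts. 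This $M$ is partial, but that is perfectly acceptable for bounding plain complexity, and the chain $C(f(n))\le^+ C(n,m)\le^+ c\log_2 n\le^+ n$ goes through exactly as you wrote. Your worry about ``blowing up the complexity bound'' via the coding of $A\rest g(n)$ is misplaced: the bound comes solely from $|T_n|\le n$, not from the size of the elements of $T_n$.
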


\begin{proof}
 Let $A$ be Schnorr trivial. Fix an order function $h$. Let $\Phi$ be
a weak truth-table functional with a recursive bound $g$ on the use
function of $\Phi$. Since the map $n\mapsto A\rest g(n)$ is
truth-table reducible to $A$, by the characterisation mentioned above,
there is a recursive trace $\seq{T_n}$ for this map which is bounded
by $h$. If $\Phi^A$ is total, then we can enumerate a trace $S_n$ for
$f$ with bound $h$ by outputting $\Phi^\s(n)$ for those $\s\in T_n$
for which $\Phi^\s$ converges with domain greater than $n$. Hence
$\deg_{wtt}(A)$ is r.e.\ traceable; by Proposition
\ref{prop:r.e.traceability_anticomplexity}, $A$ is anti-complex. 
\end{proof} 

\begin{prop}\label{prop:schnorr_triviality}
 Let $A\in \{0,1\}^\w$. If $g_A$ is dominant, then $A$ is weak truth-table
reducible to some Schnorr trivial set. 
\end{prop}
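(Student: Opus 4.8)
The plan is to build a Schnorr trivial set $B$ with $A\le_{wtt} B$ directly, exploiting that $g_A$ is dominant. Recall from Lemma~\ref{lem:A*_dominated} that the map $k\mapsto \left(A\rest g_A(k)\right)^*$ is bounded by a recursive function, and (as in the proof of Theorem~\ref{thm:first_equiv}) that $A\le_T A^*$ via $A=\bigcup\{U(\s):\s\in A^*\}$. The set $A^*$ itself need not be Schnorr trivial, but it is ``sparse'' in a recursively controlled way: its $k\tth$ element is bounded by a recursive function, and because $g_A$ is dominant, the gaps between consecutive elements of $A^*$ (measured against any recursive scale) are eventually enormous. The idea is to pad $A^*$ out to a set $B$ on a very strong array so that the padded version is Schnorr trivial while still computing $A$ with bounded use.

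Here is how I would carry it out. First, fix a recursive function $r$ dominating $k\mapsto\left(A\rest g_A(k)\right)^*$, and use it to choose a recursive sequence of disjoint intervals (blocks) $I_0<I_1<\cdots$ with $|I_k|$ growing fast enough that we can encode, in $B\rest I_k$, enough information to recover $\left(A\rest g_A(k)\right)^*$ (using the length bound $r(k)$), while $I_k$ itself is long enough relative to where the $k\tth$ element of $A^*$ sits. Define $B$ so that $B\rest I_k$ is the (self-delimiting-free, just padded to fixed length $|I_k|$) binary code of the string $\left(A\rest g_A(k)\right)^*$, and $B$ is $0$ outside $\bigcup_k I_k$. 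Then $A\le_{wtt} B$: to compute $A(x)$, find some $k$ with $g_A(k)>x$ — since $g_A$ is dominant and we know the block structure recursively, we can bound in advance which block to read — decode $B\rest I_k$ to get $\s=\left(A\rest g_A(k)\right)^*$, and output $U(\s)(x)$; the use is bounded recursively by $\max I_k$ for the appropriate $k$. Second, verify that $B$ is Schnorr trivial, i.e.\ that $\deg_{tt}(B)$ is recursively traceable: given any $f\le_{tt} B$ with recursive use bound $u$, we must produce a recursive trace of bounded size. The point is that $B\rest u(n)$ is determined by finitely many blocks, but on each block $I_k$ there are only few ``live'' possibilities because the content of $I_k$ is $\left(A\rest g_A(k)\right)^*$, which ranges over the (recursively enumerable in $k$, but more importantly size-$\le r(k)$-constrained, and ultimately: as $k$ grows the blocks overtake the use) — combined with the fact that $g_A$ dominant forces $u(n)$ to lie inside an initial, recursively bounded, set of blocks, so there are only finitely many relevant blocks and we can trace by brute force over their possible contents. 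Actually the cleanest route is to observe that $B$ is $wtt$-equivalent to $A^*$ padded, and $A^*$ is itself $\le_{tt}$ to a set whose recursive traceability follows because dominance of $g_A$ makes the information in $B$ appear so slowly that any recursive-use function can only ever see a recursively bounded amount of it.

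The main obstacle will be the Schnorr triviality verification, specifically arranging the block lengths $|I_k|$ so that two competing demands are met simultaneously: the blocks must be \emph{long enough} that $B\rest I_k$ can faithfully encode $\left(A\rest g_A(k)\right)^*$ (which needs $\Omega(r(k))$ bits, but this is a fixed recursive amount so no problem), and yet the blocks must be \emph{positioned} so that for any recursive use function $u$, the threshold $u(n)$ outgrows $\max I_{k(n)}$ only for $k(n)$ that is itself a recursive function of $n$ — which is automatic once the block endpoints are recursive — but more delicately, we need that the \emph{number of possible values} of $B$ restricted to the finitely many blocks below $u(n)$ is bounded by an order function of $n$. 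Since each block $I_k$ has a fixed recursive set of at most $2^{|I_k|}$ possible contents, and only finitely many blocks lie below $u(n)$, the trace size is recursively bounded in $n$; choosing $|I_k|$ to grow slowly enough (e.g.\ $|I_k|=k$, after ensuring $r(k)\le 2^k$ by padding the domain) keeps this bound an order function. One should double-check that $A\le_{wtt}B$ survives this slow growth: the use for computing $A(x)$ is $\max I_k$ where $k$ is least with $g_A(k)>x$; but $k$ here depends on $A$, not just on $x$, so instead I would use that $A\le_{wtt}B$ via the \emph{recursive} bound $\max I_{v(x)}$ where $v$ is a recursive function with $g_A(v(x))>x$ for almost all $x$ — such $v$ exists precisely because $g_A$ is dominant (take $v=$ identity and note $g_A(x)>x$ eventually). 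That closes the argument.
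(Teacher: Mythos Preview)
Your $A\le_{wtt}B$ argument is fine, but the Schnorr triviality verification has a genuine gap, and in fact your $B$ as described need not be Schnorr trivial at all.

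You place the blocks $I_k$ at \emph{recursive} positions. Then for a tt-reduction $\Psi$ with use $u$, your trace for $\Psi^B(n)$ has size roughly $\prod_{k<m(n)}2^{|I_k|}$, where $m(n)$ counts the blocks below $u(n)$. You note this is a recursive function of $n$ and call it ``an order function'', but recursive traceability of $\deg_{tt}(B)$ demands a \emph{single} order $h$ bounding traces for \emph{every} $f\le_{tt}B$. Your bound depends on $u$, hence on $f$; and since $u$ can be any recursive function, $m(n)$ can be made as large as you like. (If this style of argument worked, \emph{every} set would be Schnorr trivial: for any $B$ and any $f\le_{tt}B$ with use $u$, the set $\{\Psi(\s)(n):\s\in\{0,1\}^{u(n)}\}$ is a recursive trace of size $2^{u(n)}$.) Your sentence ``$g_A$ dominant forces $u(n)$ to lie inside an initial, recursively bounded, set of blocks'' is true but vacuous --- it holds because $u$ and the block structure are recursive, not because $g_A$ is dominant --- and it does nothing to make the bound uniform. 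In fact the dominance of $g_A$ is never actually invoked in your triviality argument, which is a warning sign: without it the statement is false.

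The paper's proof differs exactly at this point: the elements of $B$ are \emph{not} placed at fixed recursive positions. Instead one fixes a nested sequence $\seq{f_i}$ of increasing recursive functions with every recursive function bounded by some $f_i$, and places the $k\tth$ element $q_k$ at $\seq{(A\rest g_A(k))^*,\,f_i(k)}$ for the largest $i$ with $\seq{g(k),f_i(k)}\le g_A(k-1)$. Dominance of $g_A$ forces $i\to\infty$, so for each fixed $i$ all but finitely many elements of $B$ have second coordinate in $\range f_i$. Then for any tt-reduction with use bounded by $f_i$, below $f_i(k)$ there are at most $k\cdot g(k)$ candidate positions (pairs $\seq{p,f_i(m)}$ with $p<g(k)$, $m<k$), yielding a trace bound $2^{k g(k)}$ that is \emph{the same} for every $i$ --- that is the uniformity you are missing. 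The moral: the dominance of $g_A$ must be used to control where the information in $B$ sits, not merely how much of $A$ it encodes.
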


\begin{proof}
 Let $f_0,f_1,\dots$ be a sequence of (total) recursive functions such that
 \begin{itemize}
  \item each $f_i$ is strictly increasing,
  \item the range of $f_{i+1}$ is contained in the range of $f_i$, and
  \item every recursive function is bounded by some $f_i$.
 \end{itemize}
(Note that the halting problem $K$ can compute such a sequence.)

By Lemma \ref{lem:A*_dominated}, let $g$ be a recursive function which
bounds the function $k \mapsto  \left( A\rest g_A(k)\right)^*$. 

For each $k>0$, let $q_{k} = \seq{\left( A\rest g_A(k)\right)^*,
f_i(k)}$, where $i$ is the greatest number such that $\seq{g(k), f_i(k)} \le
g_A(k-1)$. Then for all $k>0$, $q_k \le g_A(k-1)$.

Let $B = \{ q_k\,:\, k>0\}$. We claim that $B$ is Schnorr trivial and
that $A\le_{wtt} B$.

To see the latter, let $n<\w$. Let $k = g_A^{-1}(n)$ (that is, the
greatest $k$ such that $g_A(k) \le n$). Then $q_{k+1} \le g_A(k) \le
n$ and $A\rest g_A(k+1)$ can be effectively obtained from $q_{k+1}$.
This procedure allows us to generate $A\rest n$ effectively from
$B\rest (n+1)$. 

To see that $B$ is Schnorr trivial, we appeal to the characterisation
mentioned above. Here is where we use the fact that $g_A$ is dominant.
The point is that for every $i$, all but finitely many elements of $B$
are pairs whose second coordinate is contained in the range of $f_i$.
This is because the map $k\mapsto \seq{g(k), f_{i}(k)}$ is recursive
and thus dominated by $g_A$, so for all but finitely many $k$ we will
have $q_k = \seq{\left( A\rest g_A(k)\right)^*, f_{i'}(k)}$ for some
$i'\ge i$, and the range of $f_{i'}$ is contained in the range of $f_i$. 

Now let $\Psi$ be a truth-table functional; there is some $i$ such
that $f_i$ bounds the use function of $\Psi$. After specifying a fixed
initial segment of $B$ (specifying those $q_{k'}$ whose second
coordinate is not in the range of $f_i$), there are at most
$2^{kg(k)}$ many possibilities for $B\rest f_i(k)$ because, apart from
the finitely many fixed elements, there are only $kg(k)$ many numbers
below $f_i(k)$ which can be elements of $B$, as they all have the form
$\seq{p,f_i(m)}$ for some $p< g(k)$ and $m<k$. After applying $\Psi$,
we get a recursive trace for $\Psi(B)$ whose $k\tth$ element has size
at most $2^{kg(k)}$. Hence $\deg_{tt}(B)$ is recursively traceable (in
the tt-degrees), so as quoted above, $B$ is Schnorr trivial. 
\end{proof}

\subsection{Truth-table reductions with tiny use}
Another connection between tiny use and Schnorr triviality is obtained
by examining truth-table reducibility. Recall that $A\le_{tt}B$ if and
only if there is a Turing reduction $\Phi$ for which $\Phi^X$ is total
for all $X$ and $\Phi^B=A$. We say that $A\le_{tt(tu)}B$ if for every
order function $h$ there is such a functional whose use function is
bounded by $h$. Equivalently, for every order function $h$, there is a
truth-table reduction of $A$ to $B$ for which the size of the $n\tth$
truth table is bounded by $h(n)$. This notion is invariant in the
truth-table degrees. 

Since the use function for a total Turing functional is recursive
(equivalently, the size of the $n\tth$ truth-table of a tt-reduction
is recursive), there is no uniform notion in this context.

The class of all $A$ such that there is a $B$ with $A \le_{tt(tu)} B$
gives us a new characterisation of the Schnorr trivial sets.

\begin{theorem} \label{thm:Schnorr and tt}
  Let $A\in \{0,1\}^\w$. There is a set $B$ such that $A\le_{tt(tu)} B$ if
and only if $A$ is Schnorr trivial.
\end{theorem}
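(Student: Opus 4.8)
\textbf{Proof proposal for Theorem \ref{thm:Schnorr and tt}.}

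The plan is to prove both directions by adapting the machinery already developed for $\le_{T(tu)}$ and $\le_{uT(tu)}$, with the key extra ingredient being that truth-table reductions have recursive use, which on the one hand forces us to use recursive traces rather than r.e.\ ones, and on the other hand makes the witnessing set $B$ more tightly controlled. For the forward direction, suppose $A \le_{tt(tu)} B$. Fix an order function $h$; I want to produce a recursive trace bounded by some order for every $f \le_{tt} A$ (which by the Terwijn--Stephan/Franklin characterisation, quoted before Proposition \ref{prop:trivial_is_anticomplex}, shows $A$ is Schnorr trivial). Given $f \le_{tt} A$ via a tt-functional $\Gamma$ with recursive use bound $g$, the map $n \mapsto A \rest g(n)$ is tt-reducible to $A$; pick a tt-reduction $\Psi^B = A$ (from the $\le_{tt(tu)}$ hypothesis) whose $n\tth$ truth-table has size bounded by a very slowly growing recursive function, arranged so that computing $A \rest g(n)$ uses only $B \rest \ell(n)$ for a slowly growing recursive $\ell$. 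Since $B \rest \ell(n)$ ranges over at most $2^{\ell(n)}$ strings, the set $\{\Gamma^{\Psi^\sigma \rest g(n)}(n) : \sigma \in \{0,1\}^{\ell(n)}, \text{the computation converges}\}$ is a recursive finite set containing $f(n)$, of size at most $2^{\ell(n)}$; choosing $\ell$ to grow slowly enough (e.g.\ $\ell(n) \le \log h(n)$, legitimate by the Terwijn--Zambella argument that any order bound suffices) gives a recursive trace bounded by $h$. Hence $\deg_{tt}(A)$ is recursively traceable and $A$ is Schnorr trivial.

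For the reverse direction, suppose $A$ is Schnorr trivial; by Proposition \ref{prop:trivial_is_anticomplex} it is anti-complex, so by Theorem \ref{thm:first_equiv} the function $g_A$ is dominant, and $A \le_{uT(tu)} A^*$. I would like to mimic the construction of Proposition \ref{prop:schnorr_triviality}, which built a Schnorr trivial $B$ with $A \le_{wtt} B$, but now I need the reduction from $A$ to $B$ to be a \emph{tt}-reduction with tiny use, i.e.\ total on all oracles and with recursive, arbitrarily-slow-growing truth-table size. The set $B$ from Proposition \ref{prop:schnorr_triviality} consists of codes $q_k \le g_A(k-1)$, and $A \rest n$ is recovered from $B \rest (n+1)$ by locating the largest $k$ with $q_{k+1} \le n$ and decoding $(A \rest g_A(k+1))^*$; the subtlety is that on a wrong oracle this decoding need not halt or need not produce a consistent string, so as stated it is only a wtt- (indeed Turing-) reduction, not a tt-reduction. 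I would fix this by padding: redefine $B$ so that the element coding $(A \rest g_A(k))^*$ is placed at a \emph{recursively predetermined} location $p_k$ (a fixed recursive injection with $p_k \le g_A(k-1)$, available since $g_A$ is dominant), and have the reduction, on input $n$ and oracle $X$, decode only the string coded by the unique element of $X$ in the designated slot $\le n$ of largest index, outputting $0$ (a default) whenever the decoding diverges or is too short. This makes the reduction total on every oracle, hence a genuine tt-reduction; its use on input $n$ is bounded by $n+1$ for the unrestricted version, and to get \emph{tiny} use one spreads the slots out further, using that $g_A$ dominates every recursive function to place $p_k$ above $h^{-1}$ of the relevant threshold for any prescribed order $h$, exactly as in the proof that (3) implies (4) in Theorem \ref{thm:first_equiv}. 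One must also re-verify, as in Proposition \ref{prop:schnorr_triviality}, that $B$ remains Schnorr trivial under this modification — the counting argument there only used that below any threshold $B$ has few possible restrictions, which the padded version still satisfies since the slots $p_k$ are a recursive sparse set and the coded strings have recursively bounded size by Lemma \ref{lem:A*_dominated}.

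The main obstacle is precisely the totality requirement in the reverse direction: the natural wtt-reduction of $A$ to the Schnorr trivial set decodes a Kolmogorov-optimal program, an operation that is only partial, so the delicate point is to re-engineer $B$ and the reduction so that (i) membership queries at any fixed finite stage can be answered with a bounded, recursively-computed truth table, (ii) every total oracle yields a total output (handled by a default value), and (iii) $B$ is still Schnorr trivial and still genuinely computes $A$. I expect that once the "fixed recursive slots $p_k$ with $p_k \le g_A(k-1)$" trick is in place, the verification that $\deg_{tt}(B)$ is recursively traceable goes through by the same $2^{kg(k)}$-counting as in Proposition \ref{prop:schnorr_triviality}, and the tiny-use bound follows from dominance of $g_A$ exactly as in Theorem \ref{thm:first_equiv}; so the real content is the totality/padding argument, everything else being a careful reprise of earlier proofs.
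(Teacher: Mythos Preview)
Your forward direction is correct and essentially the paper's argument: a tt-reduction of $A$ to $B$ with use bounded by $\log h$ immediately yields a recursive trace for $n\mapsto A\rest n$ of size $h(n)$, and this suffices for recursive traceability of $\deg_{tt}(A)$.

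Your reverse direction, however, has a genuine gap. You route through anti-complexity and the set $A^*$, so the information stored in $B$ is (a code for) $(A\rest g_A(k))^*$, a shortest $U$-program. To recover $A\rest g_A(k)$ the reduction must \emph{run the universal machine $U$} on whatever number it finds in the designated slot. On an arbitrary oracle $X$ that number may be a non-halting input for $U$, and your proposed fix---``output $0$ whenever the decoding diverges''---is not implementable, since divergence of $U$ is undecidable. A time bound does not help either: the time $t_k$ needed for $U$ to halt on $(A\rest g_A(k))^*$ is not recursively bounded in $k$, so no recursive time cutoff makes the reduction simultaneously total on all oracles and correct on $B$. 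In short, by passing through anti-complexity you have thrown away exactly the extra structure that distinguishes Schnorr triviality (recursive traces) from anti-complexity (r.e.\ traces), and that structure is precisely what is needed to make the reduction a tt-reduction.

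The paper's construction avoids this by using the recursive traces directly. It enumerates all increasing recursive functions $f_i$, takes recursive traces $\seq{D^i_n}$ (with $|D^i_n|=n$) for $n\mapsto A\rest f_i(n)$, and lets $B$ consist of triples $(i,n,m)$ recording which element of $D^i_n$ equals $A\rest f_i(n)$. The decoding step is then ``look up the $m$th element of the canonical-index finite set $D^i_n$'', which is a total recursive operation; a default value of $0$ handles the case where the looked-up string has the wrong length. Each $\Phi_i$ is thus a genuine tt-functional with $\Phi_i^B=A$, and the tiny-use bound comes from the polynomial growth of the triple coding together with the fact that every recursive function is dominated by some $f_i$. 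No appeal to $g_A$, $A^*$, or the universal machine is made.
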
 

\begin{proof}
 We begin by assuming that $A\le_{tt(tu)} B$. Let $h$ be an order
function. There is a total reduction $\Phi$ such that $\Phi^B=A$ whose
use function is bounded by $n\mapsto\log (h(n))$. Then a recursive
trace for $n\mapsto A\rest n$ with bound $h$ can be obtained by
applying $\Phi$. Hence $\deg_{tt}(A)$ is recursively traceable.

 Now suppose that $A$ is Schnorr trivial. Again the point is that
$\deg_{tt}(A)$ is recursively traceable, so for any recursive function
$f$, the function $A\mapsto A\rest{f(n)}$ has a recursive trace
bounded by the identity function. 

 Let $\seq{f_i}$ be an enumeration of all increasing total recursive
functions. For each $i<\w$, let $\seq{D^i_n}_{n<\w}$ be a recursive
trace for the function $A\mapsto A\rest{f_i(n)}$ such that for all
$n$, $|D^i_n| = n$.

  We let $B$ be the collection of triples $(i,n,m)$ such that
$A\rest{f_i(n)}$ is the $m\tth$ element of $D^i_n$. 

 Let $i<\w$. Let $\Phi_i$ be the following truth-table functional:
given an oracle $X$ and input $x\in [f_i(n-1), f_i(n))$, find the least
$m\le n$ such that $(i,n,m)\in X$; if the $m\tth$ element of $D^i_n$
is a string $\s$ of length $f_i(n)$, output $\s(x)$. If not, or if
there is no $m\le n$ such that $(i,n,m)\in X$, output $0$. It is clear
that for all $i<\w$, $\Phi_i^B = A$. 

 The standard coding of triples of natural numbers by natural numbers
grows polynomially. Hence, if $g$ is, say, an exponentially growing
recursive function, then for almost all $i$, for all $n$ and $m\le n$,
$(i,n,m) < g(n)$. Hence for almost all $i$, $|\Phi_i(B\rest{g(n)})|
\ge f_i(n)$, whence the function $n\mapsto |\Phi_i(B\rest n)|$
dominates $f_i\circ g^{-1}$. Of course every recursive function is
dominated by some $f_i\circ g^{-1}$, so $A\le_{tt(tu)} B$. 
\end{proof}

\section{The distribution of anti-complex sets} \label{sec:distribution}

\noindent
In this section we investigate how the anti-complex sets are
distributed in the Turing degrees and among certain classes of sets.
Three question are natural:
\begin{itemize}
 \item Which Turing degrees contain anti-complex sets?
 \item Which Turing degrees contain only anti-complex sets?
 \item What kind of sets can be anti-complex?
\end{itemize}
The answer to the second question was mentioned in the introduction:

\begin{prop}\label{prop:totally anti-complex}
 A Turing degree $\mathbf{a}$ contains only anti-complex sets if and
only if $\mathbf{a}$ is r.e.\ traceable. 
\end{prop}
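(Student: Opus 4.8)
The plan is to prove both directions by leveraging the machinery already established. For the forward direction, suppose $\mathbf{a}$ is r.e.\ traceable as a Turing degree, which by Lemma~\ref{lem:order_doesnt_matter} and the remarks preceding Theorem~\ref{thm:re traceable Turing degrees} means every weak truth-table degree inside $\mathbf{a}$ is r.e.\ traceable. Then for any $A \in \mathbf{a}$, the degree $\deg_{wtt}(A)$ is r.e.\ traceable, so by Proposition~\ref{prop:r.e.traceability_anticomplexity} the set $A$ is anti-complex. Hence $\mathbf{a}$ contains only anti-complex sets.

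For the converse, suppose $\mathbf{a}$ contains only anti-complex sets. Fix $A \in \mathbf{a}$; it is anti-complex, so by Proposition~\ref{prop:r.e.traceability_anticomplexity} the wtt-degree $\deg_{wtt}(A)$ is r.e.\ traceable. But r.e.\ traceability of a wtt-degree concerns only the functions $f \le_{wtt} A$, whereas to conclude that the \emph{Turing} degree $\mathbf{a}$ is r.e.\ traceable we must trace every $f \le_T A$. The key point is that every total function $f \le_T A$ is in fact $\le_{wtt}$ to \emph{some} set of the same Turing degree as $A$: namely, if $\Phi^A = f$ with (necessarily) recursive-in-$A$ use $\varphi$, one passes to the set $A^\dagger = A \oplus \varphi$, or more simply one applies the standard observation that traceability notions for total functions are equivalent to their strong versions (Lemma~\ref{lem:order_doesnt_matter} and the paragraph after it). Concretely, since $\varphi \le_T A$, pick any set $\hat A \in \mathbf{a}$ coding both $A$ and enough information about $\varphi$ — for instance $\hat A$ obtained by interleaving $A$ with a fast-enough modulus — so that $f \le_{wtt} \hat A$; as $\hat A \in \mathbf{a}$ it too is anti-complex, so $\deg_{wtt}(\hat A)$ is r.e.\ traceable and $f$ receives an r.e.\ trace bounded by the relevant order. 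Thus every $f \le_T A$ has an r.e.\ trace with a uniform order bound (after applying Lemma~\ref{lem:order_doesnt_matter} to normalise the order), which is exactly the statement that $\mathbf{a}$ is r.e.\ traceable.

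The main obstacle is the passage in the converse from wtt-traceability of individual sets in $\mathbf{a}$ to Turing-traceability of the degree. One must argue carefully that, given $f \le_T A$ via a functional with use $\varphi \le_T A$, there is a genuine member of $\mathbf{a}$ that wtt-computes $f$; the cleanest route is to observe that the use $\varphi$ is dominated by some function recursive in $A$, take a set $\hat A \in \mathbf{a}$ that wtt-computes both $A$ and such a bound (e.g.\ a self-modulus-style construction, or simply $\hat A = \{ \langle n, A\rest \psi(n)\rangle : n < \omega\}$ for a suitable $\psi \le_T A$), and verify $\hat A \equiv_T A$ and $f \le_{wtt} \hat A$. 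Once that is in hand the remaining steps — invoking Proposition~\ref{prop:r.e.traceability_anticomplexity} and Lemma~\ref{lem:order_doesnt_matter} to fix a common order — are routine. I expect the author's proof to phrase this more slickly using the already-cited fact that for total functions, r.e.\ traceability agrees with its strong version, making the Turing-versus-wtt distinction disappear.
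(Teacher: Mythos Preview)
Your overall strategy matches the paper's: both directions run through Lemma~\ref{lem:order_doesnt_matter} (to normalise the trace bound) and Proposition~\ref{prop:r.e.traceability_anticomplexity} / Theorem~\ref{thm:lowness_main_thm} (to equate anti-complexity with r.e.\ traceability of the wtt-degree). The paper compresses the entire argument into three sentences: a Turing degree is the union of the wtt-degrees it contains, so by Lemma~\ref{lem:order_doesnt_matter} the Turing degree is r.e.\ traceable iff each of those wtt-degrees is, and Theorem~\ref{thm:lowness_main_thm} then converts this into the anti-complexity statement. Your forward direction is exactly this.

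In the converse, however, you make a claim that is false in general: that every total $f\le_T A$ is $\le_{wtt}$ some \emph{set} $\hat A\equiv_T A$. If $f\le_{wtt} B$ for any set $B$ then $f(n)$ is computable from a binary string of recursively bounded length, whence $C(f(n))$ is recursively bounded; there are $f\le_T A$ for which this fails. Your sample construction $\hat A=\{\langle n, A\rest\psi(n)\rangle:n<\omega\}$ illustrates the problem: to locate the pair with first coordinate $n$ you must search up to a code of size roughly $\langle n,2^{\psi(n)}\rangle$, which has no recursive bound when $\psi$ does not. The ``interleave $A$ with a modulus'' idea has the same defect. So the explicit $\hat A$ route, as you wrote it, does not work.

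The paper never attempts such a construction. It works directly with the wtt-degrees of \emph{functions} in $\mathbf{a}$: each $f\in\mathbf{a}$ lies in $\deg_{wtt}(f)\subseteq\mathbf{a}$, and Lemma~\ref{lem:order_doesnt_matter} lets a single order (say the identity) serve uniformly across all of these, which is exactly Turing r.e.\ traceability. Your closing remark --- that the author likely uses the strong-version equivalence to make the Turing/wtt distinction disappear --- is on target; that is precisely what the paper does, and you should drop the detour through an explicit $\hat A$ in favour of that one-line observation.
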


\begin{proof}
 A Turing degree $\mathbf{a}$ is r.e.\ traceable if and only if for
every order function $h$, every $f\in \mathbf{a}$ has an r.e.\ trace
bounded by $h$. Since a Turing degree $\mathbf{a}$ is the union of the
weak truth-table degrees contained in $\mathbf{a}$, by Lemma
\ref{lem:order_doesnt_matter}, a Turing degree $\mathbf{a}$ is r.e.\
traceable if and only if every weak truth-table contained in
$\mathbf{a}$ is r.e.\ traceable. The result now follows from Theorem
\ref{thm:lowness_main_thm}. 
\end{proof}

\noindent
Theorem \ref{thm:re traceable Turing degrees} now follows from Theorem
\ref{thm:lowness_main_thm}.

\subsection{High and random anti-complex sets}
Franklin \cite{F08} shows that every high degree
contains a Schnorr trivial set. It follows from Proposition
\ref{prop:trivial_is_anticomplex} that every high degree contains an
anti-complex set. We improve this result in Corollary \ref{cor:high_1}.

Nies \cite{Nies:Little} constructed a $\Delta^0_2$ perfect tree,
all of whose branches are jump-traceable and thus have r.e.\ traceable
Turing degree. Every perfect $\Delta^0_2$ tree contains a high
path, and so there is a high r.e.\ traceable Turing degree. It follows
from Proposition \ref{prop:totally anti-complex} that there is a high
Turing degree that has only anti-complex elements. Note that such
a high degree cannot be $\Delta^0_2$, as every r.e.\ traceable Turing
degree is GL$_2$.

Now every high degree contains Schnorr random and recursively random
sets \cite{NST}. Hence there is a recursively random, anti-complex
set. On the other hand, sufficient randomness precludes
anti-complexity: Ku\v cera \cite{Kucera:85} has shown that every
Martin-L\"of random set weak truth-table computes a diagonally
nonrecursive function, so every Martin-L\"of random set is complex
and thus certainly not anti-complex. 

This result can be strengthened to show that partial-recursively
random sets are not anti-complex. 

\begin{proof}[Proof of Theorem \ref{thm:pcr}]

Let $A$ be an anti-complex set. By Porism \ref{por:log}, there is some
constant $c<\w$ such that $C(A\rest n)\le^+ c\log_2 n$; so for almost
all $n$, $C(A\rest n)\le (c+1)\log_2 n$. Hence Theorem 7 of
\cite{Merkle} shows that no Mises-Wald-Church stochastic set is
anti-complex. Every partial-recursively random set is
Mises-Wald-Church stochastic (see Section 7.4 of \cite{downeybook}),
and so no partial-recursively random set is anti-complex. As we just
discussed, there is a high Turing degree all of whose elements are
anti-complex, and so such a degree cannot contain a
partial-recursively random set.  
\end{proof}

\subsection{Anti-complex-free Turing degrees}
Not every Turing degree contains anti-complex sets. In fact, we can
find a counterexample within the r.e.\ degrees. Note that this
counterexample cannot be very low, as all array recursive (and hence
superlow) r.e.\ degrees are r.e.\ traceable and cannot be high. 

This result extends the result of Downey, Griffiths and LaForte
\cite{DGL} that there is an r.e.\ degree that contains no Schnorr
trivial sets and utilizes their techniques. 

These techniques involve prefix-free complexity. Recall that a machine
$M$ is \emph{prefix-free} if its domain is an antichain of $\{0,1\}^{*}$,
that is, for all distinct $\s,\tau\in \dom M$, $\s$ is not an initial
segment of $\tau$. There is a prefix-free machine, optimal among all
prefix-free machines, and so \emph{prefix-free Kolmogorov complexity},
which is often denoted by $K$, but which we denote by $H$ (to
differentiate from the halting set $K=\emptyset'$), equals $C_V$ for
some optimal prefix-free machine $V$.   

\begin{lemma}\label{lem:anti complex and prefix-free}
 If $A\in \{0,1\}^\w$ is anti-complex, then for every order function $f$,
$H(A\rest{f(n)})\le^+ n$. 
\end{lemma}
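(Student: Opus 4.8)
The plan is to convert the bound on plain complexity into a bound on prefix-free complexity by a standard coding argument, relying on the fact that anti-complexity controls $C(A\rest{g(n)})$ for \emph{every} order $g$, not just for the given $f$. First I would invoke Porism \ref{por:log}: if $A$ is anti-complex, then there is a constant $c$ such that for all $g\le_{wtt}A$, $C(g(n))\le^+ c\log_2 n$; in particular, taking $g(n)$ to be a code for $A\rest{f(n)}$, we get $C(A\rest{f(n)})\le^+ c\log_2 n$, i.e.\ $C(A\rest{f(n)})\le (c+1)\log_2 n$ for almost all $n$. The point is that the $C$-description of $A\rest{f(n)}$ has length only logarithmic in $n$, so even after paying the prefix-free overhead of roughly $2\log_2(\text{length})$ bits, we will still only be at $O(\log\log n)$ many bits, which is certainly $\le^+ n$ — in fact far less.

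More carefully, here is the coding I would use. Given the order function $f$, consider the machine $M$ that, on input $\tau$, searches for a decomposition of $\tau$ as a self-delimiting code of a number $n$ followed by a $C$-program $\s$ for some string, and if $U(\s)\rest{f(n)}$ is defined, outputs $U(\s)\rest{f(n)}$. Feeding in a self-delimiting code for $n$ (of length $\le 2\log_2 n + O(1)$, or even $\log_2 n + 2\log_2\log_2 n + O(1)$) together with the shortest $C$-program for $A\rest{f(n)}$ (of length $\le (c+1)\log_2 n$ for almost all $n$), this machine recovers $A\rest{f(n)}$. But $M$ as described need not be prefix-free; the cleanest fix is to apply the hypothesis to the order $f'$ with $f'(n)=f(2^n)$ (or any order growing faster than $f$ composed with an exponential), so that the relevant lengths become logarithmic in $2^n$, i.e.\ linear in $n$ — wait, that overshoots. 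The actually efficient route: apply anti-complexity to the order $n\mapsto f(2^{2^n})$ is unnecessary; instead just note that $H$ and $C$ differ by at most a $2\log$ term, so $H(A\rest{f(n)})\le^+ C(A\rest{f(n)}) + 2\log_2 C(A\rest{f(n)}) \le^+ (c+1)\log_2 n + 2\log_2\log_2 n \le^+ n$, and this crude inequality $H(x)\le C(x)+2\log_2 C(x)+O(1)$ is standard.

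So the key steps, in order, are: (i) quote Porism \ref{por:log} to get $C(A\rest{f(n)})\le (c+1)\log_2 n$ for almost all $n$; (ii) apply the standard relation $H(x)\le^+ C(x)+2\log_2(C(x)+1)$; (iii) conclude $H(A\rest{f(n)})\le^+ (c+1)\log_2 n + 2\log_2((c+1)\log_2 n + 1)\le^+ n$, which holds trivially since the left side is $O(\log n)$. I do not expect a real obstacle here — the lemma is much weaker than what Porism \ref{por:log} already delivers, and the only thing to be slightly careful about is that the porism is stated for $f\le_{wtt}A$ and must be applied to a $wtt$-reduction computing $A\rest{f(n)}$ from $A$ (which exists since $f$ is recursive), and that the $\le^+$ constants absorb the machine-dependent and pairing overhead. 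If one wanted the sharper statement matching the porism one could instead directly build a prefix-free machine and track the self-delimiting encoding of $n$, but for the stated conclusion the two-line argument above suffices.
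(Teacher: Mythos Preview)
Your argument is correct. The core of your proof --- quote Porism \ref{por:log} to get $C(A\rest{f(n)})\le^+ c\log_2 n$, then apply the standard inequality $H(x)\le^+ C(x)+2\log_2(C(x)+1)$ --- is sound, and the conclusion $H(A\rest{f(n)})=O(\log n)\le^+ n$ follows immediately. (The digression in the middle about building a prefix-free machine by hand, and the ``wait, that overshoots'' aside, should simply be deleted; steps (i)--(iii) are the whole proof.)

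The paper takes a slightly different route: rather than quoting the porism and then converting $C$ to $H$, it reruns the trace argument from Proposition \ref{prop:r.e.traceability_anticomplexity} directly for prefix-free complexity. That is, it uses r.e.\ traceability of $\deg_{wtt}(A)$ to get an r.e.\ trace $\seq{T_n}$ bounded by the identity for $n\mapsto A\rest{f(n)}$, builds (implicitly, via Kraft--Chaitin or a direct prefix-free machine) descriptions showing $H(A\rest{f(n)})\le^+ H(n,m)$ for some $m\le n$, and finishes with the crude bound $H(n,m)\le^+ c\log_2 n$. Your approach is more modular --- it reuses Porism \ref{por:log} as a black box and invokes only the textbook $H$--$C$ relation --- while the paper's approach avoids passing through $C$ at all and makes the prefix-free description explicit. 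Both land on an $O(\log n)$ bound, so neither buys anything quantitatively over the other for this lemma.
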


\begin{proof}
We follow the proof of Proposition
\ref{prop:r.e.traceability_anticomplexity}. If $A$ is anti-complex and
$f$ is an order function, then since $\deg_{wtt}(A)$ is r.e.\
traceable, there is an r.e.\ trace $\seq{T_n}$, bounded by the
identity function, for the function $n\mapsto A\rest{f(n)}$. The same
argument in the proof of Proposition
\ref{prop:r.e.traceability_anticomplexity} shows that for all $n$
there is some $m\le n$ such that
 \[ H(A\rest{f(n)}) \le^+ H(m,n).\]
 It is no longer true that $H(x)\le^+ \log_2 x$, but even a crude
bound such as $H(n)\le^+ 2\log_n$ would do to show that for some
constant $c$ we have $H(m,n)\le^+c\log_2 n \le^+ n$ as required. 
\end{proof}

\begin{thm}
 There is an r.e.\ Turing degree that contains no anti-complex sets.
\end{thm}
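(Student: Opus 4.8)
The plan is to build an r.e.\ set $A$ whose Turing degree contains no anti-complex set by a priority construction against all pairs $(\Phi_e, W_e)$, where $\Phi_e$ ranges over Turing functionals (potential reductions of some $B \in \deg_T(A)$ to $A$ and back) and $W_e$ over r.e.\ sets (potential candidates in the degree). Since anti-complexity is a property of the wtt-degree by Lemma~\ref{lem:anticomplex_C(f)}, but we are aiming at the Turing degree, I would instead phrase the requirements directly via Lemma~\ref{lem:anti complex and prefix-free}: to defeat a potential anti-complex set $B \equiv_T A$, it suffices to diagonalize against a recursive presentation of a reduction $B = \Gamma^A$ together with an order function $f_e$ and build, in stages, an initial segment $B\rest{f_e(n)}$ witnessing $H(B\rest{f_e(n)}) > n + e$ infinitely often. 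Following Downey--Griffiths--LaForte \cite{DGL}, I would maintain for each requirement a ``region'' of $A$ below some movable marker; whenever the prefix-free machine $V$ fails to compress $B\rest{f_e(n)}$ below $n$, the requirement is met by a measure/counting argument on the (finitely many) strings $V$ could produce.

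The key steps, in order: (1) Fix an enumeration of all pairs $(\Gamma_e, \Delta_e, f_e)$ where $\Gamma_e, \Delta_e$ are Turing functionals and $f_e$ an order function, with the intent that $\Gamma_e^A = B$ and $\Delta_e^B = A$ for the $B$ we are trying to kill. (2) For each $e$, reserve infinitely many ``coding locations'' in $A$, spaced out enough (using the use of $\Gamma_e$ and $\Delta_e$) that enumerating an element into $A$ near location $n$ forces a change in $B\rest{f_e(n)}$ while only injuring higher-priority requirements finitely often; here the self-referential nature $A \equiv_T B$ is exploited: changing $A$ changes $B$, and we get to choose which. (3) Run the standard DGL ``cost'' bookkeeping: at stage $s$, if for the current approximation $B_s$ we see that $V_s$ has enumerated a short description (length $\le n + e$) of $B_s\rest{f_e(n)}$ for too many $n$, act by enumerating an appropriate element into $A$, thereby changing $B$ on the relevant segment and killing that description. (4) Argue that the total ``weight'' of such actions is finite (this is where prefix-freeness of $V$ and the Kraft inequality enter, exactly as in the construction of an r.e.\ degree with no Schnorr trivial set), so each requirement acts only finitely often and $A$ is a well-defined r.e.\ set. (5) Conclude: any $B \equiv_T A$ is computed from $A$ by some $\Gamma_e$ with $A = \Delta_e^B$, and our requirement for that index $e$ ensures $H(B\rest{f_e(n)}) > n$ infinitely often, so by Lemma~\ref{lem:anti complex and prefix-free} $B$ is not anti-complex.

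The main obstacle I expect is the interaction between the two directions of the equivalence $A \equiv_T B$: we control $A$ directly (it is the set we enumerate), but we must ensure that a change we make to $A$ propagates to a \emph{specific, short} initial segment of $B$ that $V$ has just tried to compress. This requires carefully tracking the use functions of both $\Gamma_e$ (for $B$ from $A$) and $\Delta_e$ (for $A$ from $B$): when $V$ compresses $B\rest{f_e(n)}$, we need a location in $A$ below the $\Gamma_e$-use of $f_e(n)$ that we are still permitted to change, and changing it must not destroy the hypothesis $A = \Delta_e^B$ in a way that lets the opponent ``repair'' the compression for free. The standard resolution, which I would adopt, is to work with the \emph{approximations} $\Gamma_{e,s}, \Delta_{e,s}$ and only commit to a requirement being ``active'' once these approximations have stabilized on the relevant segment, using a guessing/tree-of-strategies argument (or a simpler length-of-agreement argument, since the functionals are given rather than constructed) to handle the case where $\Gamma_e^A \ne B$ or $\Delta_e^B \ne A$ — in which case the requirement is vacuously satisfied. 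The combinatorics of spacing the coding locations so that the finite-injury bounds actually close is the technical heart, and it is essentially the DGL argument of \cite{DGL} adapted from $H(B\rest{f(n)}) \le n + O(1)$ (Schnorr triviality against all of $\deg_{tt}$) to the same inequality against all of $\deg_T$, which is why the theorem is stated as an extension of their result.
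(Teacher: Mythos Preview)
Your plan diverges from the paper's in a significant way. The paper does \emph{not} carry out a new priority construction; it quotes Theorem~9 of \cite{DGL} as a black box, obtaining an r.e.\ set $A$ such that for every $B\equiv_T A$ there is a prefix-free machine $M$ with $\mu(\dom M)$ a recursive real and $H(B\rest m)\ge C_M(m)$ for infinitely many $m$. The new content is purely complexity-theoretic: the recursiveness of $\mu(\dom M)$ lets one define an increasing recursive $f$ with $\sum_{m>f(n)}2^{-C_M(m)}\le 2^{-3n}$; a Kraft--Chaitin request set then gives $H(m)\le^+ C_M(m)-2f^{-1}(m)$; and if $B$ were anti-complex, Lemma~\ref{lem:anti complex and prefix-free} would give $H(B\rest{f(n)})\le^+ n$, from which a short computation yields $H(B\rest m)\le^+ C_M(m)-f^{-1}(m)$ for all $m$, contradicting the DGL conclusion since $f^{-1}$ is unbounded.

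There is a genuine gap in your formulation of the requirements. You enumerate \emph{all} triples $(\Gamma_e,\Delta_e,f_e)$ with $f_e$ ranging over order functions, and for each you aim to force $H(B\rest{f_e(n)})>n+e$ infinitely often. But take any $e$ with $f_e(n)=\lfloor\log_2 n\rfloor$: for \emph{every} set $B$ one has $H(B\rest{f_e(n)})\le f_e(n)+H(f_e(n))+O(1)=O(\log n)$, so the inequality $H(B\rest{f_e(n)})>n+e$ fails for all large $n$ and the requirement is unsatisfiable regardless of how you build $A$. Even with $f_e$ the identity and $\Gamma_e=\Delta_e$ the identity functional (so $B=A$), you would be asking for $H(A\rest n)>n+e$ infinitely often, which is impossible since $A$ is r.e.\ and hence $H(A\rest n)=O(\log n)$. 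The point is that refuting anti-complexity of $B$ requires only \emph{one} order $f$, and that order must be \emph{constructed} during the argument, tailored to the reduction pair $(\Gamma_e,\Delta_e)$ --- it cannot be handed to you by an enumeration of all orders. This is precisely what the DGL machine $M$ supplies, and what the paper's $f$ is extracted from via the recursive-measure condition. If you want a self-contained construction, the correct setup is to enumerate only pairs $(\Gamma_e,\Delta_e)$, build an auxiliary machine $M_e$ for each as in \cite{DGL}, and then carry out the paper's Kraft--Chaitin step on top.
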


\begin{proof}
 For any prefix-free subset $D$ of $\{0,1\}^{*}$, we let 
 \[ \mu(D) = \sum_{\tau\in D} 2^{-|\tau|} \]
 be the measure of the subset of the Cantor space defined by $D$ by taking
all infinite extensions of elements of $D$. 

 Theorem 9 of \cite{DGL} states that there is an r.e.\ set $A$ such
that for all $B\equiv_T A$ there is a prefix-free machine $M$ such
that $\mu(\dom(M))$ is a recursive real and such that for infinitely
many $m$, $H(B\rest m)\ge C_M(m)$. 

 The r.e.\ degree we seek is the Turing degree of $A$. Let $B\equiv_T
A$; we show that $B$ is not anti-complex. Let $M$ be a machine for
$B$ as described in the previous paragraph. 

 We first note that $\dom(M)$ is a recursive subset of $\{0,1\}^{*}$: If
$\seq{M_s}$ is a some recursive enumeration of $M$, then $\dom(M)\rest
\{0,1\}^{\le n} = \dom(M_s)\rest \{0,1\}^{\le n}$ for any stage $s$ such that
$\mu(\dom(M)) - \mu(\dom (M_s)) < 2^{-n}$; such a stage $s$ can be
found effectively from $n$. Now the range of $M$ may not be recursive,
but $C_M \rest \range(M)$ is a partial recursive function. 

 We can compute a strictly increasing recursive function $f$ such that
for all $n$, 
 \[ \sum_{\substack{m>f(n) \\ m\in \range M}} 2^{-C_M(m)} \le 2^{-3n} \]
by finding some $s(n)$ such that $\mu(\dom(M))- \mu(\dom(M_{s(n)}))\le
2^{-3n}$ and letting $f(n)$ be greater than any number in the range of
$M_{s(n)}$.  
 Let 
 \[ L = \left\{ (C_M(m)-2f^{-1}(m),m)\,:\, m\in \range M \right\}.\]
 The set $L$ is recursively enumerable. Recall that for any set
$D\subseteq \w^2$, the \emph{weight} $\texttt{wt}(D)$ of $D$ is
$\sum_{(n,m)\in D}2^{-n}$. 
 We have 
 \begin{eqnarray*}
  \texttt{wt}(L) = \sum_{m\in \range M} 2^{2f^{-1}(m)-C_M(m)} =
\sum_{n}2^{2n} \sum_{\substack{m\in \range M\\ m\in [f(n), f(n+1))}}
2^{-C_M(m)} \le  \\
  \sum_n 2^{2n}\sum_{\substack{m\in \range M \\m\ge f(n)}} 2^{-C_M(m)}
\le \sum_n 2^{2n}2^{-3n} = \sum_n 2^{-n} < \infty.
  \end{eqnarray*}
 The Kraft-Chaitin Theorem (see \cite{downeybook,Nies:book}) now
ensures that for all $m$, 
 \[ H(m) \le^+ C_M(m) - 2f^{-1}(m)\]
 (recall that for $m\notin\range M$, we let $C_M(m)= \infty$).

 Suppose that $B$ is anti-complex. Then by Lemma \ref{lem:anti
complex and prefix-free}, $H(B\rest{f(n)})\le^+ n$. Let $m<\w$ and let
$n= f^{-1}(m)$. We can uniformly compute $A\rest m$ if we are given
both $m$ and $B\rest {f(n+1)}$. Since $H$ measures prefix-free
complexity, we have $H(B\rest m) \le^+ H(m) + H(B\rest f(n+1))$ (a
description for $B\rest m$ is a description for $m$ concatenated with
a description for $B\rest f(n+1)$). Overall we get, for all $m$, 
 \[ H(B\rest m)\le^+ H(m) + f^{-1}(m) \le^+ C_M(m) - f^{-1}(m).\]
 Since $f$ is increasing, $f^{-1}$ is unbounded, which would make
it impossible to have infinitely many $m\in \range M$ such that
$H(B\rest m) \ge C_M(m)$. Hence $B$ cannot be anti-complex. 
\end{proof}

\subsection{Anti-complex r.e.\ and $\w$-r.e.\ sets}
The results so far show that if $A$ is anti-complex, then there is
some set $B\le_T A\oplus K$ such that $A\le_{uT(tu)}B$. In general, as
we will see shortly, one cannot improve this to $B\le_{wtt} A\oplus
K$. However, if $A$ is r.e., then we get an improved bound as follows.

\begin{prop}\label{prop: r.e. anticomplex}
 If $A$ is an anti-complex r.e.\ set, then $A\le_{uT(tu)} K$.
\end{prop}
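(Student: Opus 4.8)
The plan is to combine the machinery already developed with the extra leverage that comes from $A$ being recursively enumerable, namely that $A\rest n$ can be approximated from below and that correctness of a guess at $A\rest n$ is a $\Pi^0_1$ event. By Theorem~\ref{thm:first_equiv}, since $A$ is anti-complex we know that $g_A$ is dominant and that $A\le_{uT(tu)} A^*$. The only thing preventing us from concluding $A\le_{uT(tu)}K$ directly is that a priori $A^*$ sits at the level of $A\oplus K$ rather than at the level of $K$. So the real content is to replace the oracle $A^*$ by a $K$-computable (indeed, $K$-recursive) object that still carries the same ``tiny use'' information, exploiting that the true initial segments $A\rest g_A(k)$ of an r.e.\ set can be certified using $K$.

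First I would fix, using Lemma~\ref{lem:A*_dominated}, a recursive function $g$ bounding $k\mapsto (A\rest g_A(k))^*$, and recall that $g_A$ is dominant. The idea is to define a set $\widehat A\le_T K$ whose $(n{+}1)$st column (or whose membership below a recursive threshold depending on $k$) records, for each $k$, the true value of $(A\rest g_A(k))^*$, packed into a short code of length at most $\log g(k)$. Using $K$ as an oracle one can, for each $k$, compute $g_A(k)$ relative to $A\oplus K$, but more to the point one only needs $K$ to wait for the \emph{least} length-$\le k$ program $\s$ with $U(\s){\downarrow}$ equal to a \emph{correct} initial segment of $A$ of the right length, and ``correct'' for an r.e.\ set is $\Pi^0_1$, hence decidable from $K$; and the relevant lengths are confined below the recursive bound $g(k)$, so only finitely much searching is needed for each $k$. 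This yields a \emph{recursive-in-$K$} function $k\mapsto (A\rest g_A(k))^*$, so that a set $B$ coding these values at recursively-controlled locations is $\le_T K$, in fact can be taken to be $\le_{wtt}$ or even recursive-in-$K$ with a recursive modulus, which is all we need.

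Once such a $B\le_T K$ is in hand, I would run exactly the reduction from the ``(3) implies (4)'' step of Theorem~\ref{thm:first_equiv}, but with $B$ in place of $A^*$: the functional $\Phi$ reconstructs $A$ by reading off $U(\s)$ for the codes $\s$ stored in $B$, and because $g_A$ is dominant, the map $n\mapsto |\Phi(B\rest n)|$ dominates every recursive function, giving $A\le_{uT(tu)} B$. Since $B\le_T K$, Observation~\ref{obs:wtt_invariance} (increasing the degree on the range) gives $A\le_{uT(tu)} K$, provided $B\le_{wtt} K$ — and indeed $B$ is recursive in $K$ with a recursive use bound $g$ by construction, so $B\le_{wtt}K$.

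The main obstacle is the second step: arranging that the object coding the true $(A\rest g_A(k))^*$ is genuinely $\le_{wtt} K$ (not just $\le_T A\oplus K$), which is where r.e.-ness is essential. The key point to get right is that for an r.e.\ set $A$ the predicate ``$\sigma$ is an initial segment of $A$'' is $\Pi^0_1$ and the candidate witnesses for $g_A(k)$ all lie below a \emph{recursive} bound $g(k)$, so $K$ can decide, within a recursively bounded search, both the value $g_A(k)$ and the least optimal code $(A\rest g_A(k))^*$; this simultaneously gives the $\le_T K$ reduction and the recursive use bound needed for $\le_{wtt}K$, and hence for $\le_{uT(tu)}K$.
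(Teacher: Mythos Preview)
Your overall strategy matches the paper's: show that a set coding the values $(A\rest g_A(k))^*$ is $\le_{wtt} K$, then invoke Theorem~\ref{thm:first_equiv} and Observation~\ref{obs:wtt_invariance}. The paper does precisely this, proving $A^*\le_{wtt} K$.

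However, your justification for the key step has a genuine gap. You argue that since ``$\tau\subset A$'' is (essentially) $\Pi^0_1$ for r.e.\ $A$, and since the candidate codes lie below the recursive bound $g(k)$, a recursively bounded $K$-search suffices. But the use on $K$ needed to decide ``$U(\s)\subset A$'' for a candidate program $\s$ depends on $|U(\s)|$, not on $|\s|$ or on $k$: to certify $U(\s)\subset A$ you must query $K$ about membership in $A$ of numbers up to $|U(\s)|$. The lengths $|U(\s)|$ for $|\s|\le k$ are not recursively bounded in $k$ --- think of short programs describing enormous strings which happen not to be initial segments of $A$ but must still be ruled out. So your search yields $\le_T K$ but not $\le_{wtt} K$, and it is exactly the wtt reduction that Observation~\ref{obs:wtt_invariance} requires. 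Your own final paragraph flags this as ``the key point to get right,'' but the argument you give there does not deliver the recursive use bound.

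The paper closes this gap by a different route: it shows that $g_A$ is $\w$-r.e.\ by exhibiting a recursive approximation $g_s(k)$ (built from enumerations $A_s$ and $U_s$) and bounding the number of mind-changes by $2\cdot 2^{k+1}$, since for each program $\s$ of length at most $k$ the truth value of ``$U(\s)\subset A_s$'' can flip at most twice. This gives $g_A\le_{wtt}K$, and then $A^*\le_{wtt} g_A\oplus K\oplus A\le_{wtt}K$ (using $A\le_{wtt}K$) follows immediately. Your argument can be repaired along these lines, but the mind-change count is the essential missing ingredient.
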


\begin{proof}
 We claim that if $A$ is r.e., then $A^* \le_{wtt} K$; the rest
follows from Theorem \ref{thm:first_equiv}. Fix a recursive
enumeration $\seq{A_s}$ of $A$ and let, at stage $s$, $g_s(k)$ be the
least number $n$ such that no initial segment of $A_s$ of length at
least $n$ has a $U$-description of length at most $k$. Then $g_s$
converges to $g_A$ and is an $\w$-r.e.\ approximation of $g_A$. We can
have $g_{s+1}(k)\ne g_s(k)$ only in three cases:
 \begin{itemize}
  \item there is some $\s\in \dom U_{s+1}\setminus \dom U_s$ of length
at most $k$ and $U(\s)\subset A_{s+1}$;
  \item there is some $\s\in \dom U_{s}$ of length at most $k$ such
that $U(\s)\not\subset A_s$ but $U(\s)\subset A_{s+1}$; or
  \item there is some $\s\in \dom U_{s}$ of length at most $k$ such
that $U(\s)\subset A_s$ but $U(\s)\not\subset A_{s+1}$.
 \end{itemize}
 For each $\s$, each case can happen at most once, and the first two
cannot both happen at different stages.  Hence our approximation for
$g_s(k)$ changes at most $2\cdot 2^{k+1}$ many times. 

 Hence $g_A \le_{wtt} K$, and it is straightforward to see that $A^*
\le_{wtt} g_A\oplus K\oplus A$ for any set $A$ because once we know
$g_A(k)$, we only need to query $K$ about strings below $g(k)$ (where
$g(k) > \left( A\rest g_A(k)\right)^*$ is recursive) to find $\left(
A\rest g_A(k)\right)^*$ and hence $A^*$. 
\end{proof}

\noindent
Theorem \ref{thm:r.e. degrees} now follows from Proposition \ref{prop:
r.e. anticomplex} and the techniques of Downey, Jockusch and Stob
\cite{Downey_Jockusch_Stob:_array_recursive_1,DJS99} and
Ishmukhametov \cite{Ishmu:99}. The fact that the array recursive r.e.\
wtt-degrees form an ideal now follows from Observation \ref{obs:ideal}.

\medskip
\noindent
One would perhaps hope that the previous result could be extended to
classes wider than the class of r.e.\ sets and their weak truth-table
degrees. Of course, if $A\le_{T(tu)} K$, then $A\le_{wtt} K$ and so $A$
is $\w$-r.e.; however, we now show that there are $\w$-r.e.\ sets $A$
which are anti-complex and yet $A \not\le_{T(tu)} K$. This shows that
the condition $B\le_T A\oplus K$ for the bound for $A$ with tiny use
cannot in general be improved to $B\le_{wtt} A\oplus K$.

We first need a lemma which again is not new, but which is not found in
standard references (an approximation, insufficient for our purposes,
is Theorem 9.14.6 in \cite{downeybook}). Let $\Omega$ be the halting
probability --- any left-r.e. Martin-L\"of random real would do.

\begin{lemma}\label{lem:Omega_compresses}
 For any r.e.\ set $A$, there is a reduction of $A$ to $\Omega$ with
use bounded below $2\log n$. 
\end{lemma}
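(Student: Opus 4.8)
The plan is to exploit the slow convergence of the left‑r.e.\ approximation to $\Omega$. Fix a recursive enumeration $\seq{\Omega_s}$ with $\Omega_s\nearrow\Omega$, arising from a fixed prefix-free universal machine $V$ with $\Omega=\mu(\dom V)$. Given the first $m$ bits $\Omega\rest m$, let $s(m)$ be the least stage $s$ with $\Omega_s>0.(\Omega\rest m)$, the dyadic rational named by those bits; since $\Omega$ is irrational, $s(m)$ exists and is computed from $\Omega\rest m$. Because $\Omega-\Omega_{s(m)}<2^{-m}$, no string of length at most $m$ can still enter $\dom V$ after stage $s(m)$; hence for every $V$-program $q$ with $|q|\le m$ we have $q\in\dom V$ if and only if $q\in\dom V_{s(m)}$, and in the positive case $V(q)$ has already converged by stage $s(m)$.

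Now fix a recursive enumeration $\seq{A_s}$ of $A$. By the padding lemma and universality of $V$, for each $j$ we can name a $V$-program $q_j$ of length at most $\log_2 j+o(\log j)$ which runs the partial procedure ``search the enumeration of $A$ for $j$ and halt once it appears''. Thus $q_j\in\dom V$ iff $j\in A$, and moreover (slowing the enumeration if necessary) we may arrange that the computation $V(q_j)$, when it converges, does so only at a stage by which $j$ has already been enumerated into $A$. Let $g$ be a nondecreasing recursive function with $g(j)\ge|q_j|$ for all $j$ and $g(j)\le 2\log_2 j$ for almost all $j$.

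The reduction $\Psi$ is then: on input $n$, read $\Omega\rest g(n-1)$, compute from it the stage $S=S(n)$ obtained as above (so that every $V$-program of length at most $g(n-1)$ which halts at all has halted by stage $S$), and output $A_S\rest n$. If some $j<n$ entered $A$ only after stage $S$, then $V(q_j)$ would converge only after stage $S$, contradicting $|q_j|\le g(n-1)$ together with the displayed property of $S$; hence $A_S\rest n=A\rest n$, so $\Psi^\Omega=A$. The use of $\Psi$ on input $n$ is $g(n-1)\le 2\log_2 n$ for almost all $n$; we hard-code the finitely many remaining values of $n$ into $\Psi$ so that they consult no oracle, and the use is then bounded below $2\log n$ everywhere.

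The routine but load-bearing point---the step to be careful about---is the synchronization in the second paragraph: one must choose the programs $q_j$ so that ``$V(q_j)$ has converged by stage $s$'' genuinely entails ``$j$ has appeared in $A$ by some recursively controlled stage $\widetilde g(s)$'', which is then folded into the definition of $S(n)$. It is exactly this implication, paired with the first paragraph's fact that $\Omega\rest m$ certifies all halting among $V$-programs of length $\le m$, that forces $A_{S(n)}\rest n$ to have stabilized. Everything else is bookkeeping of program lengths to keep the bound at $2\log_2 n$, for which a sufficiently efficient self-delimiting encoding of $j$ inside $q_j$ suffices.
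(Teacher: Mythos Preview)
Your argument is correct, but it takes a different route from the paper's. You exploit the specific structure of $\Omega$ as the measure of the domain of a universal prefix-free machine $V$: from $\Omega\rest m$ you recover the settling stage $s(m)$ by which every $V$-program of length $\le m$ that will ever halt has already halted, and then you plant, for each $j$, a short $V$-program $q_j$ whose halting encodes ``$j\in A$''. The paper instead runs a Solovay-test argument: whenever $n$ enters $A$ at stage $s$, it enumerates the interval $[\Omega_s,\Omega_s+2^{-h(n)}]$ into a test $G$ of finite total measure (here $h(n)=2\log n$), and since $\Omega$ is Martin-L\"of random it lies in only finitely many such intervals; hence for almost every $n$, any stage $t$ with $\Omega_t\rest h(n)=\Omega\rest h(n)$ already has $A_t(n)=A(n)$. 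The paper's proof uses only that $\Omega$ is a left-r.e.\ Martin-L\"of random real (as noted just before the lemma, any such real would do), whereas your argument as written needs the halting-probability representation --- recovering that generality would require something like Ku\v{c}era--Slaman. On the other hand, your reduction is correct on \emph{every} input once the programs $q_j$ are set up (the only finitary patching is in the use bound), while the Solovay-test reduction may err on the finitely many $n$ for which $\Omega$ falls into the corresponding interval. Your flagged synchronisation issue is real but, as you note, routine: one may simply output, for each $j<n$, the bit ``$q_j\in\dom V_{s(g(n-1))}$'' directly, bypassing the detour through $A_S$ altogether.
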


\noindent
Indeed, we can even get a bound of $h(n)$ where $h$ is such that
$\sum_n 2^{-h(n)}$ is finite, such as $\log n + 2\log \log n$. 

\begin{proof}
 Let $\langle\Omega_s\rangle$ be an effective, increasing
approximation of $\Omega$ and, similarly, let $\langle A_s\rangle$ be
an effective enumeration of $A$. Let $h$ be a function such that
$\sum_n 2^{-h(n)}$ is finite. 

 If $n$ is the smallest number which enters $A$ at stage $s$, we
enumerate the interval $[\Omega_s, \Omega_s+2^{-h(n)}]$ into a Solovay
test $G$ which we enumerate. Since $n$ enters $A$ at most once, the
total measure of $G$ is at most $\sum_n 2^{-h(n)}$, which is finite by
assumption. 

 $\Omega$ is random, so it belongs to only finitely many of the
intervals in $G$. To compute $A(n)$ from $\Omega\rest h(n)$, find a
stage $t$ at which $\Omega_t \rest h(n) = \Omega\rest h(n)$; we claim
that $A(n)= A_t(n)$. If $n$ enters $A$ at a later stage $s$, then
$[\Omega_s,\Omega_s+2^{-h(n)}]$ is in $G$, but $\Omega- \Omega_t \le
2^{-h(n)}$ and $\Omega_t\le \Omega_s \le\Omega$, so we conclude that
$\Omega$ is in the interval $[\Omega_s,\Omega_s+2^{-h(n)}]$. Thus we
can get a wrong answer for only finitely many numbers $n$, and we can
find a reduction as required.
\end{proof}

\begin{prop} \label{prop:omega_differs}
 There is an anti-complex $\w$-r.e.\ set which is not reducible to $K$
with tiny use. 
\end{prop}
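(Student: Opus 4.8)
The plan is to build $A$ as an $\omega$-r.e.\ set by a finite-injury construction that simultaneously makes $A$ anti-complex and diagonalises against all potential reductions of $A$ to $K$ with tiny use. Anti-complexity I would secure not by direct control of $C(A\rest n)$ but by arranging $A\le_{wtt}\Omega$ via Lemma \ref{lem:Omega_compresses}: if $A$ is r.e.\ then $A$ is reducible to $\Omega$ with use below $2\log n$, hence $C(A\rest n)\le^+ 2\log n$, which by Lemma \ref{lem:anticomplex_C(f)} (applied to the identity and to any order $f$, noting $C(A\rest f(n))\le^+ 2\log f(n)\le^+ n$ for any order $f$ growing slower than the identity) gives anti-complexity. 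Actually, more carefully, I want $A$ to be $\omega$-r.e.\ rather than r.e., so I would instead fix the Solovay-test bookkeeping of Lemma \ref{lem:Omega_compresses} directly into the construction: whenever the construction changes $A(n)$ at stage $s$, enumerate $[\Omega_s,\Omega_s+2^{-h(n)}]$ into a Solovay test $G$, where $h(n)=\log n+2\log\log n$. To keep $\mathtt{measure}(G)$ finite I must ensure each $n$ is changed only boundedly often with the bound summable against $2^{-h(n)}$; since $A$ is to be $\omega$-r.e.\ the number of changes at $n$ is some recursive bound $g(n)$, and I would demand $g(n)\le h(n)/2$ or so, i.e.\ only change $A(n)$ for $n$ large relative to the number of changes needed — standard "use large numbers" bookkeeping. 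Then the argument of Lemma \ref{lem:Omega_compresses} shows $A\le_{wtt}\Omega$, so $A$ is anti-complex.

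The diagonalisation is the heart of it. I must meet, for every pair $(\Phi_e,h_e)$ where $h_e$ is an order function and $\Phi_e$ a Turing functional, the requirement that $\Phi_e^K\ne A$ whenever $\Phi_e^K$ is total with use bounded by $h_e$. The obstacle is that $K$ is not under our control and can change; the leverage is exactly that the use $h_e$ is tiny. Fix a requirement $R_e$ and pick a fresh follower $n$ (large). At a stage $s$ where $\Phi_{e,s}^{K_s}(n)\cvg$ with use $\le h_e(n)$ and agrees with $A_s(n)$, I act: flip $A(n)$ to the opposite value. For this flip to be permanent I need $K_s\rest h_e(n)$ to be permanent, i.e.\ $K$ must not later change below $h_e(n)$. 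Since $h_e(n)$ is small, the set of numbers below $h_e(n)$ is small, so $K$ can change there only finitely often and — crucially — I can compute, using $K$ itself or by waiting, a stage past which $K\rest h_e(n)$ is stable. The standard trick (as in the Downey–Jockusch–Stob array-recursiveness arguments the paper cites) is: choose the follower $n$ so large that $h_e(n)<n$ and wait for a stage where the relevant finite part of $K$ looks settled; if $K$ later injures us below $h_e(n)$, that is one of only finitely many numbers, so after finitely many injuries the requirement succeeds with a permanent follower. One must interleave the requirements with ordinary finite-injury priority and make sure the "change $A(n)$" actions remain compatible with the Solovay-test budget — this is why $n$ is chosen enormous: the cost $2^{-h(n)}$ of recording the change is then negligible, and the finitely many re-flips forced by $K$-injury cost at most $O(\log n)\cdot 2^{-h(n)}$, still summable.

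The main obstacle, then, is coordinating the two demands: anti-complexity wants few changes at each $n$ with small Solovay cost, while diagonalisation wants the freedom to flip $A(n)$ in response to $K$-changes below the tiny use. These are reconciled by the gap between $h_e(n)$ and $n$: because the use is tiny, $K$ can force only finitely many flips per follower, and by spacing followers far apart (follower for $R_e$ chosen large relative to all earlier activity and to $e$) the Solovay measure $\sum 2^{-h(n)}$ over all flipped $n$ stays finite. I would also need to confirm $A$ is genuinely $\omega$-r.e.: the number of changes at each $n$ is bounded by a recursive function (roughly $1+$ the number of possible $K$-injuries below $h_e(n)$, which is at most $h_e(n)$), so $A$ is $\omega$-r.e.\ as claimed. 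Finally, $A\not\le_{T(tu)}K$ is immediate: if it were, then for the order $h$ witnessing it there would be a reduction $\Phi$ with use $\le h$, and this $\Phi$ is $\Phi_e$ for some $e$ with $h_e=h$ up to the choice of order, so $R_e$ would be violated — contradiction.
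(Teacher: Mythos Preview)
Your anti-complexity argument has a genuine gap. You claim that arranging $C(A\rest n)\le^+ 2\log n$ (via a Solovay-test reduction to $\Omega$) gives anti-complexity, and you justify this by checking the definition only for orders $f$ ``growing slower than the identity''. But anti-complexity requires $C(A\rest f(n))\le n$ for almost all $n$ for \emph{every} order $f$, including fast ones such as $f(n)=2^{2^n}$, for which your bound yields only $C(A\rest f(n))\le^+ 2\cdot 2^n$. Indeed the condition $C(A\rest n)\le^+ O(\log n)$ holds of \emph{every} r.e.\ set --- this is exactly what Lemma~\ref{lem:Omega_compresses} gives --- yet by Theorem~\ref{thm:r.e. degrees} only the array-recursive r.e.\ sets are anti-complex; $K$ itself satisfies your complexity bound and is not anti-complex. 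So your Solovay-test bookkeeping contributes nothing toward anti-complexity, and there is no evident reason the set your diagonalisation produces should have r.e.\ traceable wtt-degree. (A smaller point: to defeat $A\le_{T(tu)}K$ you need only exhibit \emph{one} order $h$ admitting no $h$-bounded reduction, so diagonalising against all pairs $(\Phi_e,h_e)$ is more than required --- but fixing this does not repair the main gap.)

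The paper's proof avoids any direct construction. It takes a dense simple r.e.\ set $A$, so that every superset of $A$ is Schnorr trivial, and sets $B=A\cup\Omega$. Then $B$ is Schnorr trivial (hence anti-complex) and $\omega$-r.e.\ as a Boolean combination of sets wtt-below $K$. If $B\le_{T(tu)}K\equiv_{wtt}\Omega$, there would be a reduction $\Gamma^\Omega=B$ with $|\Gamma(\Omega\rest n)|>n$; combining this with the $2\log n$-use reduction $\Delta^\Omega=A$ from Lemma~\ref{lem:Omega_compresses}, one defines a recursive martingale that, along a string $\sigma$ of length $n$, doubles on the bit $\Gamma(\sigma)(n)$ whenever $\Delta(\sigma)(n)=0$ (since then $A(n)=0$ and so $B(n)=\Omega(n)$). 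As $A$ is coinfinite this martingale succeeds on $\Omega$, contradicting its randomness. Anti-complexity thus comes for free from the dense-simple superset trick rather than from any engineered complexity bound.
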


\noindent
Indeed, as the proof shows, there is such a set which is also the
difference of two left-r.e.\ reals. (We cannot get a left-r.e.\ real,
because every left-r.e.\ real is weak truth-table equivalent to an r.e.\ set.)

\begin{proof}
 By \cite[Theorem 4.1]{FS08}, there is a coinfinite r.e.\ set $A$ such
that every superset of $A$ is Schnorr trivial (indeed, any dense
simple set would do). Let $B= A\cup \Omega$. $B$ is Schnorr trivial
and thus anti-complex. $B$ is also $\w$-r.e., since it is a Boolean
combination of two sets which are wtt-reducible to $K$. 

 Now assume for a contradiction that $B\le_{T(tu)} K$. Then there is
some reduction $\Gamma^\Omega=B$ such that for all $n$,
$|\Gamma(\Omega\rest n)|>n$ because $\Omega$ has the same wtt-degree
as $K$. By Lemma \ref{lem:Omega_compresses}, there is a reduction
$\Delta^\Omega =A$ with the same property, as there is a reduction
from $A$ to $\Omega$ with use below $2\log n$. 

 We use the functionals $\Gamma$ and $\Delta$ to define a recursive
martingale which will succeed on $\Omega$, contradicting the fact that
$\Omega$ is random. The martingale $d$ is defined by induction on the
length of the binary strings which form its domain. We start with the
value 1. If $d(\s)$ is defined, we first calculate $\Delta(\s)(n)$ and
$\Gamma(\s)(n)$, where $n=|\s|$ (if either $|\Gamma(\s)|\le n$ or
$\Delta(\s)|\le n$, then we know that $\s$ cannot be an initial segment
of $\Omega$, so we can stop all betting). If $\Delta(\s)(n)=1$, then we
hedge our bets, that is, we let $d(\s0)=d(\s1)=d(\s)$. Otherwise, we put
all of the capital we have on the outcome $\Gamma(\s)(n)$, because in
this case, if $\s=\Omega\rest n$, then $A(n)=0$ and so
$B(n)=\Omega(n)$. Thus we let $d(\s i)=2d(\s)$ and $d(\s (1-i))=0$,
where $i= \Gamma(\s)(n)$. 

 Since $A$ is coinfinite, there are infinitely many $n$ at which we
double our money betting along $\Omega$, so $\lim_n d(\Omega\rest
n)=\infty$ as required for the contradiction. 
\end{proof} 

\section{Sets bounding nonrecursive sets with tiny use} \label{sec:high}

\noindent
We now turn to investigate the ranges of the relations $\le_{T(tu)}$
and $\le_{uT(tu)}$ (where the domain is restricted to the class of
nonrecursive sets to avoid triviality). Unlike their domains, these
ranges are not equal, because as we observed earlier, if
$A\le_{uT(tu)}B$ and $A$ is nonrecursive, then $B$ is high, whereas we
will shortly see that there are nonhigh sets which bound nonrecursive
sets with tiny use. First, we prove some results on the range of $\le_{uT(tu)}$.

\subsection{High degrees}
Unlike for Turing reducibility, with weak truth-table reducibility we
have to be careful when we deal with functions (elements of the Baire
space $\w^\w$) and sets (elements of the Cantor space $\{0,1\}^\w$). For
example, a function is always Turing equivalent to its graph, but if
it is not bounded by a recursive function, it may not be
wtt-equivalent to its graph. Our primary interest is to investigate
$\le_{T(tu)}$ on \emph{sets}, and so far we have not treated functions
as oracles in computations with recursive or tiny use. However, as a
technical tool, we can extend the definitions of $\le_{T(tu)}$ and
$\le_{uT(tu)}$ to include functions as oracles in the standard way;
weak truth table invariance still holds. In this context we have the
following result.

\begin{obs}\label{obs:dominant_uT(tu)}
 Let $G(f)$ be the graph of $f$. If $f$ is a dominant function, then
$G(f)\le_{uT(tu)} f$.
\end{obs}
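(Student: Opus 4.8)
The plan is to exhibit, for each order function $h$, a Turing reduction $\Psi^f = G(f)$ whose use is bounded by $h$, exploiting the fact that a dominant $f$ grows so fast that even the shortest initial segment of $f$ needed to read off $f(0)$ already dwarfs the relevant bound. First I would recall that $G(f)=\{\seq{n,m}:f(n)=m\}$, and that to decide whether $\seq{n,m}\in G(f)$ it suffices to know the single value $f(n)$. In the oracle model where $f$ itself is the oracle (so querying position $n$ returns $f(n)$ in one step), computing $G(f)\rest N$ requires knowing $f(n)$ for all $n$ such that some pair $\seq{n,m}<N$; since the pairing function is monotone enough, this means knowing $f\rest k$ for some $k$ roughly $\sqrt{N}$ or so — in any case $k \le N$.

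The key point, which is where domination does the work, is the following: fix an order function $h$. Consider the recursive function $n \mapsto$ (the largest number needed to code, via $\seq{\cdot,\cdot}$, any pair $\seq{0,m}$ with $m \le$ something) — more carefully, since $f$ is dominant it dominates the recursive function $h^{-1}$ (the discrete inverse of $h$, which is recursive and unbounded since $h$ is an order function). So for almost all $n$, $f(n) > h^{-1}(n)$, equivalently $h(f(n)) > n$ — actually I want the comparison the other way. Let me instead argue: to compute $G(f)\rest N$ I query $f$ at positions $< k$ for some $k\le N$; the use (as a function of the input position $x = N-1$ into $G(f)$) is then bounded by $k \le x+1$. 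I then need to shrink this. The trick is that $f$ dominant implies $f$ dominates the identity, indeed dominates every recursive function, so in particular the \emph{number} $f(0)$ (the first oracle value, a fixed constant) — no, that's just a constant.

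The correct mechanism is Corollary \ref{cor:alternative_def}(2): $G(f)\le_{uT(tu)}f$ iff there is a reduction $\Psi^f=G(f)$ such that $n\mapsto|\Psi(f\rest n)|$ is dominant. So I would build $\Psi$ so that reading $f\rest n$ — i.e., knowing $f(0),f(1),\dots,f(n-1)$ — lets me decide membership in $G(f)$ for all pairs $\seq{i,m}$ with $i<n$ and $m\le f(i)$, hence for all pairs up to roughly $\seq{n-1,f(n-1)}$. Thus $|\Psi(f\rest n)| \ge \seq{n-1, f(n-1)} \ge f(n-1)$, using that the pairing function satisfies $\seq{a,b}\ge b$. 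Since $f$ is dominant, $n\mapsto f(n-1)$ is dominant, hence $n\mapsto |\Psi(f\rest n)|$ is dominant, which is exactly condition (2) of Corollary \ref{cor:alternative_def}. The one subtlety — and the main obstacle — is making sure that knowing $f\rest n$ genuinely suffices to decide $\seq{i,m}\in G(f)$ for all the claimed pairs and that $\Psi$ never queries beyond $f\rest n$: given input $x=\seq{i,m}$, $\Psi$ queries $f(i)$ (one query, at position $i\le x$) and outputs $1$ iff $f(i)=m$; the use for computing $G(f)\rest\seq{n-1,f(n-1)}+1$ is then exactly $\max\{i+1 : \seq{i,m}\le \seq{n-1,f(n-1)}\}$, and since $\seq{n-1,f(n-1)}$ encodes $i\le n-1$, this use is $\le n$, so $|\Psi(f\rest n)|\ge \seq{n-1,f(n-1)}+1 > f(n-1)$ as needed. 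This completes the argument once the elementary monotonicity facts about the pairing function are invoked.
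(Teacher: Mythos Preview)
Your overall strategy---invoking Corollary~\ref{cor:alternative_def}(2) and showing that $n\mapsto|\Psi(f\rest n)|$ is dominant---is the right one, but the execution fails at a crucial point. You claim that ``since $\seq{n-1,f(n-1)}$ encodes $i\le n-1$, this use is $\le n$,'' i.e., that every pair coded below $\seq{n-1,f(n-1)}$ has first coordinate at most $n-1$. No bijective pairing function has this property (it would force all pairs $\seq{0,m}$ to precede all pairs $\seq{1,m'}$, impossible since both families are infinite); for the Cantor pairing, for instance, $\seq{n-2+f(n-1),\,0}$ lies below $\seq{n-1,f(n-1)}$ yet has first coordinate roughly $f(n-1)$. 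With your naive reduction (on input $\seq{i,m}$, query $f(i)$), one actually gets $|\Psi(f\rest n)| = \min\{\seq{i,m}:i\ge n\}$, which is a \emph{recursive} function of $n$ and hence never dominant, regardless of $f$. Note that your argument never invokes the dominance of $f$---a warning sign, since the observation is certainly false for non-dominant $f$.

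What is missing is a reduction that exploits the \emph{values} $f(0),\dots,f(n-1)$ to decide $G(f)$ at pairs $\seq{i,m}$ with $i\ge n$. For nondecreasing $f$ (which one may always arrange: replace $f$ by $n\mapsto\max_{i\le n}f(i)$, preserving both wtt-degree and dominance, and appeal to wtt-invariance of $\le_{uT(tu)}$ on the right), let $\Psi$ on input $\seq{i,m}$ query $f(0),f(1),\dots$ until the first $j$ with $f(j)>m$; output $[f(i)=m]$ if $i\le j$, and output $0$ if $i>j$ (correct since then $f(i)\ge f(j)>m$). The use on input $x=\seq{i,m}$ is at most $1+\min\{j:f(j)>m\}$, and since $m\le x$, computing $G(f)\rest N$ has use at most $1+\min\{j:f(j)\ge N\}$; hence $|\Psi(f\rest n)|\ge f(n-1)$, which is dominant as required.
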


\noindent
This allows us to characterise the range of $\le_{uT(tu)}$.

\begin{lemma}\label{lem:range_uT(tu)}
 Let $B\in \{0,1\}^\w$. There is some nonrecursive set $A$ such that
$A\le_{uT(tu)} B$ if and only if there is some dominant function $f\le_{wtt} B$.
\end{lemma}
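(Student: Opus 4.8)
The plan is to prove both directions by relating nonrecursive sets reducible to $B$ with uniform tiny use to dominant functions that are weak truth-table below $B$. For the easy direction, suppose $f\le_{wtt} B$ is dominant. Then the graph $G(f)$ is a set, and by Observation \ref{obs:dominant_uT(tu)} we have $G(f)\le_{uT(tu)} f$; since $f\le_{wtt} B$, Observation \ref{obs:wtt_invariance} (wtt-invariance on the range side) gives $G(f)\le_{uT(tu)} B$. Because $f$ is dominant it is certainly not recursive, so its graph $G(f)$ is nonrecursive, and we may take $A = G(f)$.

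For the converse, suppose $A$ is nonrecursive and $A\le_{uT(tu)} B$, witnessed by a Turing functional $\Phi$ with $\Phi^B = A$. By Corollary \ref{cor:alternative_def}(2), the function $f\colon n\mapsto |\Phi(B\rest n)|$ is dominant. I first observe that $f\le_{wtt} B$: to compute $|\Phi(B\rest n)|$ we only need to query $B$ on numbers below $n$, so the use is bounded by the (recursive) identity function. So we already have a dominant $f\le_{wtt} B$, and the lemma is proved. (If one instead wants the dominant function to be honest or total in some stronger sense, note $f$ is total because $\Phi^B = A$ is total, and $f$ is nondecreasing since longer oracle initial segments can only produce longer outputs.)

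The main point to be careful about is the first direction: we must ensure that moving from the dominant function $f$ to an actual \emph{set} $A$ preserves both nonrecursiveness and the relation $\le_{uT(tu)} B$. This is exactly what Observation \ref{obs:dominant_uT(tu)} is designed to handle — a dominant function computes its graph with tiny use because, to read off whether $(x,y)\in G(f)$, one needs only finitely much (in fact, arbitrarily little, since $f$ grows fast) of $f$ to have converged past the relevant index. I do not anticipate any genuine obstacle here; the content is entirely packaged into Corollary \ref{cor:alternative_def} and Observation \ref{obs:dominant_uT(tu)}, and the lemma is essentially an unwinding of definitions once those are in hand.
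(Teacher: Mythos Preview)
Your proposal is correct and follows essentially the same approach as the paper's proof: for the forward direction the paper likewise takes $f(n)=|\Phi(B\rest n)|$ (pointing back to the proof of Proposition~\ref{prop: high1} rather than Corollary~\ref{cor:alternative_def}(2), but this is the same observation), and for the backward direction it also sets $A=G(f)$ and invokes Observation~\ref{obs:dominant_uT(tu)} together with wtt-invariance.
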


\begin{proof}
In the proof of Proposition \ref{prop: high1} we noticed that if
there is some nonrecursive set $A$ such that $A\le_{uT(tu)} B$
witnessed by some reduction $\Phi$, then the map $n\mapsto
|\Phi(B\rest n)|$ is dominant and is weak truth-table reducible to
$B$. In the other direction, suppose that $f$ is dominant and that
$f\le_{wtt}B$. Let $A$ be the graph of $f$. Then $A\le_{uT(tu)} f$;
together with $f\le_{wtt} B$ we get $A\le_{uT(tu)} B$  from Observation
\ref{obs:wtt_invariance}. 
\end{proof}

\noindent
We know that every high Turing degree contains a dominating function,
but the weak truth-table degree of that function may not contain any set. 

\begin{lemma}\label{lem:sufficient_wtt}
 Let $f$ be a function such that $n\mapsto C(f(n))$ is bounded by some
recursive function. Then $f$ is wtt-equivalent to some set.
\end{lemma}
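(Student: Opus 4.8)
The plan is to encode $f$ into a set $B$ block by block, using the hypothesis to keep each block short. Fix once and for all a recursive enumeration $\seq{U_s}$ of the universal machine $U$, and fix a recursive function $r$ with $C(f(n))\le r(n)$ for all $n$; we may assume $r(n)\ge 1$. For each $n$ let $S_n$ be the finite set $\{U(\s)\,:\,|\s|\le r(n),\ U(\s)\cvg\}$, enumerated in the order induced by $\seq{U_s}$. Then $\seq{S_n}$ is a uniformly recursively enumerable sequence of finite sets, $f(n)\in S_n$ because $C(f(n))\le r(n)$, and $|S_n|<2^{r(n)+1}$ because there are fewer than $2^{r(n)+1}$ binary strings of length at most $r(n)$. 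Let $m_n<2^{r(n)+1}$ be the position at which $f(n)$ appears in the enumeration of $S_n$.

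Next, partition $\w$ into consecutive intervals $\seq{I_n}$ with $|I_n|=r(n)+1$; since $r$ is recursive, the endpoints of $I_n$ are a recursive function of $n$. Define $B\in\{0,1\}^\w$ by letting the restriction of $B$ to the interval $I_n$ be the length-$(r(n)+1)$ binary representation of $m_n$. This is a genuine element of Cantor space. I would then verify the two reductions. For $f\le_{wtt}B$: to compute $f(n)$, read the bits of $B$ on the interval $I_n$ to recover $m_n$, then enumerate $S_n$ until its $m_n\tth$ element appears and output that element; the oracle use is bounded by the right endpoint of $I_n$, a recursive function of $n$. For $B\le_{wtt}f$: given $k$, recursively find the $n$ with $k\in I_n$, query the oracle $f$ at $n$ to obtain $f(n)$, enumerate $S_n$ until $f(n)$ appears to recover its position $m_n$ (this search terminates since $f(n)\in S_n$), and output the appropriate bit of the binary representation of $m_n$; the use on the oracle $f$ is bounded by $n+1\le k+1$, hence recursive.

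The only mild subtlety is bookkeeping: one must fix a single recursive enumeration of $U$ so that ``the position of $f(n)$ in the enumeration of $S_n$'' and ``the $m\tth$ element enumerated into $S_n$'' are mutually inverse and uniformly computable operations, and one must record that the block structure $\seq{I_n}$ is recursive, which follows from recursiveness of $r$. There is no real obstacle beyond this: the hypothesis $C(f(n))\le r(n)$ is exactly what bounds $|S_n|$, and hence the length of block $n$, which is what makes both reductions have recursive use.
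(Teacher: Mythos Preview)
Your proof is correct and follows essentially the same approach as the paper: both use the bound on $C(f(n))$ to associate to each $n$ a recursively bounded ``witness'' from which $f(n)$ can be recovered (the paper records the first short $U$-description of $f(n)$ as a pair $(n,u)$, while you record the position of $f(n)$ in an enumeration of short outputs, packed into consecutive blocks). The differences are purely cosmetic packaging.
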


\begin{proof}
 Let $g$ be a recursive function which bounds $C(f(n))$. Let $A$ be
the set of pairs $(n,u)$ where $u$ is the first number below $g(n)$
which is discovered in some effective enumeration of the universal
machine $U$ to be mapped by $U$ to $f(n)$. Then $A\equiv_{wtt} f$. 
\end{proof}

\begin{prop}\label{prop:Martin_plus}
 Every high Turing degree contains a dominant function $\hat f$ such that
$C(\hat f(n))\le^+ n$.
\end{prop}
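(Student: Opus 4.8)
The plan is to start from Martin's characterisation of high Turing degrees: a degree $\mathbf{a}$ is high if and only if it contains a dominant function. So fix a high degree $\mathbf{a}$ and a function $f_0 \in \mathbf{a}$ which dominates every recursive function. The function $f_0$ need not satisfy $C(f_0(n)) \le^+ n$, and it may not even be wtt-equivalent to a set, so the task is to replace it by a better-behaved dominant function $\hat f$ in the same Turing degree. The natural move is to compress: define $\hat f(n)$ to be a natural-number code for the finite string $f_0 \rest (n+1) = \langle f_0(0), f_0(1), \dots, f_0(n)\rangle$, using a reasonable coding of finite sequences of naturals by naturals. Then $\hat f \equiv_T f_0$ (each determines the other recursively, reading off coordinates in one direction and taking initial segments in the other), so $\hat f \in \mathbf{a}$, and $\hat f$ is strictly increasing, hence certainly dominant since $f_0$ is.

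It remains to arrange $C(\hat f(n)) \le^+ n$. This is where some care is needed, because there is no reason for an arbitrary dominant $f_0 \in \mathbf{a}$ to have low-complexity initial-segment codes. The idea is to choose $f_0$ more cleverly. Since $\mathbf{a}$ is high, it is in particular not recursive but we don't need that; what we want is a dominant $f_0 \in \mathbf{a}$ whose values are ``sparse'' enough relative to their indices that the code of an initial segment has a short description. One clean way: inside $\mathbf{a}$, produce a dominant function $f_0$ together with a recursive function $r$ such that $f_0(n) \le r(n)$ fails in general (it cannot, $f_0$ being dominant) — so instead bound the \emph{code}. Better: let $p$ be a recursive function growing fast enough that the code of any sequence $\langle a_0,\dots,a_n\rangle$ with $a_i < p(i)$ is below, say, $2^{2^n}$, and arrange $f_0(n) < p(n)$ for a dominant $f_0$ — but a dominant function cannot be recursively bounded. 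The resolution is to pass to the \emph{modulus}-style trick used already in Lemma~\ref{lem:sufficient_wtt}: we do not need $f_0(n)$ small, only $C(f_0(n))$ small, and more relevantly $C(\hat f(n))$ small. Given the earlier Lemma~\ref{lem:sufficient_wtt}, it suffices to produce a dominant $f_0 \in \mathbf{a}$ with $n \mapsto C(f_0(n))$ recursively bounded, and then take $\hat f \equiv_{wtt}$ the associated \emph{set} from that lemma? No — we want a function, so instead: take $\hat f(n) = $ the code of $(A\rest n)$ where $A$ is that set; then $C(\hat f(n)) \le^+ n$ automatically (since $C(A\rest n) \le^+ n$ whenever $A \equiv_{wtt} f_0$ with $f_0$ of recursively bounded complexity — this is exactly the anti-complexity-style bound, but we only need the trivial direction $C(A\rest n)\le^+ n$ holds for \emph{every} set). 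Wait: $C(A\rest n) \le^+ n$ holds for \emph{every} $A$, so $\hat f(n) := $ code of $A\rest n$ works for any $A$, and we just need $\hat f$ dominant and in $\mathbf{a}$. So the real content is: find $A \in \mathbf{a}$ (or $A \equiv_T$ something in $\mathbf{a}$) such that $n \mapsto \mathrm{code}(A\rest n)$ is dominant. Since that map is strictly increasing in $A$'s ``activity'' it is dominant as soon as $A$ is, e.g., bi-immune-ish relative to the recursive functions — but the clean sufficient condition is that $A$ computes a dominant function, which every high degree does; then modify.

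Let me restate the clean line of attack I would actually carry out. Step 1: by Martin \cite{Martin:66}, pick a dominant $g \in \mathbf{a}$, and by replacing $g(n)$ with $\max\{g(0),\dots,g(n)\}+n$ assume $g$ is strictly increasing; still $g \equiv_T$ the original, so $g \in \mathbf{a}$, and $g$ is dominant. Step 2: Since $g$ is strictly increasing, its ``range as a set'' $R = \{g(n) : n\}$ satisfies $R \equiv_T g$, so $R \in \mathbf{a}$. Step 3: Define $\hat f(n) = $ the code of the string $R\rest n$. This is strictly increasing (as $R$ changes — and since $g$ is dominant, $R$ is not eventually constant in any recursive way, in fact $R$ contains $g(n) \ge n$ elements below $g(n)$... hmm, need $\hat f$ to \emph{dominate every recursive function}, not just be increasing). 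The cleanest fix: instead take $\hat f(n) = g(n) + \mathrm{code}(R \rest g(n))$; then $\hat f(n) \ge g(n)$ so $\hat f$ is dominant, $\hat f \equiv_T g \in \mathbf{a}$, and $C(\hat f(n)) \le^+ C(g(n), R\rest g(n)) \le^+ C(R\rest g(n)) + C(g(n) \mid R\rest g(n)) \le^+ C(R\rest g(n))$ — but $R\rest g(n)$ is a string of length $g(n)$, so $C(R\rest g(n)) \le^+ g(n)$, giving $C(\hat f(n)) \le^+ g(n)$, which is not $\le^+ n$. So this still doesn't work directly — the obstacle, and I expect \textbf{this to be the main difficulty}, is decoupling the \emph{size} of $\hat f(n)$ (which must grow fast, to dominate) from its \emph{complexity} (which must grow slowly, like $n$). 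The right idea is the one from \cite{GN08}/anti-complexity: encode the \emph{graph} information of the dominant function at stage $n$ using only $\approx n$ bits by recording, for each $i \le n$, not $g(i)$ itself but the single bit ``has the $i$-th requirement/row been settled yet'' in an $\mathbf{a}$-computable $\omega$-c.e.\ or $\Delta^0_2$ approximation — i.e., build the dominant function directly by a high permitting / Martin-style construction in which the canonical ``use'' or ``settling'' parameter is an $n$-bit object. Concretely, I would prove Prop.~\ref{prop:Martin_plus} by going back into the proof of Martin's theorem: from $A' \ge_T \emptyset''$ one gets a function $m \le_T A$ dominating every recursive function, where $m(n)$ is a modulus of a $\emptyset''$-computable approximation; the key point is that $m(n)$ can be taken to be coded by an initial segment of an $A$-recursive \emph{set} of length $O(n)$ — so set $\hat f(n) = $ code of that length-$O(n)$ string. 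Then $\hat f \le_T A$, $\hat f$ dominates every recursive function (inheriting this from $m$), and $C(\hat f(n)) \le^+ n$ since $\hat f(n)$ is (a code for) a string of length $O(n)$. Finally check $\hat f \in \mathbf{a}$: it is $A$-computable and, since it dominates, combined with earlier results or a direct argument it is not of strictly lower high-ness — actually we only need $\hat f \le_T A$ and $\hat f$ dominant, and then re-apply Martin to conclude $\deg_T(\hat f)$ is high; to land \emph{exactly} in $\mathbf{a}$, build $\hat f$ so that $A \le_T \hat f$ as well, which the modulus construction gives for free (the approximation recovers $A$). I would close by noting this is the standard strengthening of Martin's theorem and the only new wrinkle is tracking that the modulus parameter has $O(n)$-bit codes, giving the complexity bound for free.
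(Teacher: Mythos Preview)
Your proposal correctly isolates the central tension --- $\hat f$ must be dominant (hence not recursively bounded) while $C(\hat f(n))\le^+ n$ (hence each value has a length-$n$ description) --- but your final resolution does not close it. You set $\hat f(n)$ equal to the \emph{code of a binary string of length $O(n)$}; any such code is at most $2^{O(n)}$, a recursive bound, so this $\hat f$ cannot be dominant. You then claim $\hat f$ ``inherits'' dominance from the modulus $m$, but $\hat f(n)$ is the code of the string, not $m(n)$ itself, and you give no reason why $\hat f(n)\ge m(n)$. The underlying assertion, that in Martin's construction ``$m(n)$ can be taken to be coded by an initial segment of an $A$-recursive set of length $O(n)$'', is exactly the content of the proposition and is not delivered by any standard proof of Martin's theorem; it needs an idea you have not supplied. (All of your earlier attempts run into the same wall you yourself diagnosed: writing $\hat f(n)$ as the literal code of a short string caps its size.)

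The paper resolves the tension by using the settling-time machine for $\Omega$. Given a dominant $g\in\degr{a}$, set $f(n)$ to be the least stage $s$ with $\Omega_s\rest n = \Omega_{g(n)}\rest n$. Then $C(f(n))\le^+ n$ via the machine which, on an input $\s$ of length $n$, outputs the first stage $s$ with $\Omega_s\rest n=\s$; this machine sends the $n$-bit string $\Omega_{g(n)}\rest n$ to $f(n)$. And $f$ is dominant: for any recursive $h$, $H(\Omega_{h(n)}\rest n)\le^+ H(n)$ while $\Omega$ is random, so $\Omega_{h(n)}\rest n\ne\Omega\rest n$ for almost all $n$; since $g$ dominates the recursive ``first change after $h(n)$'' function and the approximation $\seq{\Omega_s}$ is increasing (so never revisits old values), $f(n)>h(n)$ almost always. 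Finally one codes some $A\in\degr{a}$ by $\hat f(n)=2f(n)+A(n)$ to land exactly in $\degr{a}$. The crucial point your proposal misses is that the short description of $\hat f(n)$ is realised through a \emph{machine} that unfolds an $n$-bit string into an enormous number; your attempts all stored the value itself in the string.
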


\begin{proof}
 Let $g$ be a dominant function; we first find an $f\le_T g$ with the
desired properties. 

 Once again, let $\langle \Omega_s\rangle$ be an effective increasing
approximation
of $\Omega$. Define $f$ by letting $f(n)$ be the least $s\le g(n)$
such that $\Omega_s \rest n = \Omega_{g(n)}\rest n$. It is certainly
true that $f\le_T g$.

 First we show that $C(f(n))\le^+ n$. Let $M$ be a machine that on an
input $\s$ of length $n$ outputs the least stage $s$ such that
$\s=\Omega_s\rest n$ if such a stage exists. Then for all $n$,
$M(\Omega_{g(n)}\rest n)= f(n)$, so $C_M(f(n))\le n$ as required. 

 Next, let $h$ be an order function. We first note that
$H(\Omega_{h(n)}\rest n)\le H(n)$ (as before, $H$ denotes \emph{prefix-free}
Kolmogorov complexity), and since $\Omega$ is random, for almost all
$n$, $\Omega_{h(n)}\rest n \ne \Omega\rest n$, as $H(\Omega\rest
n)\ge^+ n$. Thus we can let $\hat h(n)$ be the least $s>h(n)$ such
that $\Omega_s\rest n \ne \Omega_{h(n)}\rest n$; this too is a
recursive function, defined on almost every input. 

 Since $g$ is dominant, for almost all $n$, $g(n) > \hat h(n)$, which
implies that $\Omega_{g(n)}\rest n \ne \Omega_{h(n)} \rest n$ since
the approximation $\Omega_s\rest n$ does not return to old values,
and so $f(n) \ge \hat h(n) > h(n)$ for almost all $n$. Thus $f$ is dominant. 

\medskip
\noindent
 Next, we code a set $A$ in the Turing degree of $g$ into $f$ to get a
function which is Turing equivalent to $g$. We let $\hat f(n) = 2f(n)
+ A(n)$. Then $A\le_T \hat f$ and $\hat f \le_T f\oplus A \le_T g$, so
$\hat f\equiv_T g$. Since $\hat f$ bounds $f$, $\hat f$ is dominant,
and $C(\hat f(n))\le^+ C(f(n))\le^+ n$.
\end{proof}

\begin{cor}\label{cor:high_1}
 Every high Turing degree contains sets $A$ and $B$ such that $A\le_{uT(tu)} B$.
\end{cor}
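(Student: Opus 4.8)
The plan is to derive this directly from Proposition \ref{prop:Martin_plus} together with the machinery of the previous lemmas. First I would fix a high Turing degree $\mathbf{a}$. By Proposition \ref{prop:Martin_plus}, $\mathbf{a}$ contains a dominant function $\hat f$ with $C(\hat f(n))\le^+ n$. The goal is to produce sets $A$ and $B$ in $\mathbf{a}$ with $A\le_{uT(tu)} B$, so the natural move is to replace the function $\hat f$ by a set of the same Turing degree that still carries the relevant domination property in a wtt-robust way.

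The key steps, in order: (1) Apply Lemma \ref{lem:sufficient_wtt} to $\hat f$; since $n\mapsto C(\hat f(n))$ is bounded by a recursive function, there is a set $B$ with $B\equiv_{wtt}\hat f$, and in particular $B\equiv_T\hat f$, so $B\in\mathbf{a}$. (2) Let $G(\hat f)$ be the graph of $\hat f$; since $\hat f$ is dominant, Observation \ref{obs:dominant_uT(tu)} gives $G(\hat f)\le_{uT(tu)}\hat f$. (3) Combine this with $\hat f\le_{wtt}B$ (which holds because $B\equiv_{wtt}\hat f$) and invoke the wtt-invariance of $\le_{uT(tu)}$ on the range side (Observation \ref{obs:wtt_invariance}) to conclude $G(\hat f)\le_{uT(tu)}B$. (4) Finally, let $A$ be a \emph{set} version of the graph: either observe that $G(\hat f)$ is itself essentially a set via the standard pairing, or more carefully apply Lemma \ref{lem:sufficient_wtt} again — note $C$ of the $n\tth$ bit region of $G(\hat f)$ is controlled by $C(\hat f(n))\le^+ n$ — to obtain a set $A\equiv_{wtt}G(\hat f)$; since $G(\hat f)\equiv_T\hat f$, we get $A\in\mathbf{a}$, and by Observation \ref{obs:wtt_invariance} again (now on the domain side) $A\le_{uT(tu)}B$. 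This is exactly what was claimed.

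Alternatively, and perhaps more cleanly, I would apply the argument of Lemma \ref{lem:range_uT(tu)} wholesale: that lemma says $B$ bounds a nonrecursive set with tiny use (uniformly) iff there is a dominant $f\le_{wtt}B$. Here we have a dominant $f=\hat f\le_{wtt}B$, and the only remaining point is to arrange that $B$ and the resulting $A$ both lie in the prescribed high degree $\mathbf{a}$ — which is precisely what Proposition \ref{prop:Martin_plus} (giving $\hat f\in\mathbf{a}$) and Lemma \ref{lem:sufficient_wtt} (giving $B\equiv_{wtt}\hat f$, hence $B\equiv_T\hat f\in\mathbf{a}$) deliver, with $A$ taken to be a set wtt-equivalent to the graph of $\hat f$, which is again Turing equivalent to $\hat f$.

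The only mild obstacle is the set-versus-function bookkeeping: we must ensure that when we pass back and forth between $\hat f$, its graph, and the sets supplied by Lemma \ref{lem:sufficient_wtt}, the \emph{Turing} degree is preserved (so that both $A$ and $B$ land in $\mathbf{a}$) while the uniform-tiny-use reduction, which is only wtt-invariant, survives. Since $\hat f$ is dominant and hence bounds a recursive function — no, rather since $C(\hat f(n))\le^+ n$ — Lemma \ref{lem:sufficient_wtt} applies to give wtt-equivalent sets, and wtt-equivalence implies Turing equivalence, so there is no loss; this is entirely routine once it is spelled out, and I would not expect any genuine difficulty.
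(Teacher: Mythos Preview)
Your proposal is correct and follows essentially the same route as the paper: apply Proposition \ref{prop:Martin_plus} to obtain a dominant $\hat f\in\degr{a}$ with $C(\hat f(n))\le^+ n$, use Lemma \ref{lem:sufficient_wtt} to get a set $B\equiv_{wtt}\hat f$ (hence $B\in\degr{a}$), and then take $A$ to be the graph of $\hat f$, which already is a set and is Turing equivalent to $\hat f$, so $A\in\degr{a}$ and $A\le_{uT(tu)}B$ via Observation \ref{obs:dominant_uT(tu)} and wtt-invariance (or equivalently via Lemma \ref{lem:range_uT(tu)}). Your hesitation in step (4) about needing a second application of Lemma \ref{lem:sufficient_wtt} is unnecessary --- the graph is a set via the standard pairing, exactly as you note in your first alternative, and the paper simply takes $A=G(\hat f)$ without further ado.
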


\begin{proof}
 Let $\degr{a}$ be a high Turing degree. By Proposition
\ref{prop:Martin_plus} and Lemma \ref{lem:sufficient_wtt}, there is
some dominant $f\in \degr{a}$ which is wtt-equivalent to some set $B$, 
so of course $B\in \degr{a}$. By Lemma \ref{lem:range_uT(tu)}, there
is some set $A$ such that $A\le_{uT(tu)} B$. Indeed, we can take $A$
to be the graph of $f$. $A$ is thus Turing equivalent to $f$, so $A\in
\degr{a}$.
\end{proof}

\noindent
We can improve on the corollary in case, for example, the high degree
is also generalised low. 

\begin{theorem}\label{thm:high_2}
 If $\degr{a}$ is a Turing degree such that $\degr{a}\vee \degr{0}'
\ge_T \degr{0}''$, then for every $B\in \degr{a}$ there is some $A\in
\degr{a}$ such that $A\le_{uT(tu)} B$. 
\end{theorem}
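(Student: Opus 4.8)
The plan is to reduce the theorem, via the graph construction of Observation \ref{obs:dominant_uT(tu)}, to the single assertion that \emph{every} $B\in\degr a$ weak truth-table computes a dominant function, and then to build such a function from the hypothesis. For the reduction, suppose we have a dominant $g\le_{wtt}B$. Fix a pairing with $\seq{x,y}\ge\max(x,y)$ and set $f(n)=\seq{g(n),B\rest n}$. Then $f\le_{wtt}B$ (the use at $n$ is the maximum of $n$ and the recursively bounded use of $g$ at $n$), $B\le_T f$ (decode the second coordinate), and $f$ is dominant since $f(n)\ge g(n)$; hence $f\equiv_T B$. Let $A=G(f)$. Then $A\equiv_T f\equiv_T B$, so $A\in\degr a$, and $A=G(f)\le_{uT(tu)}f$ by Observation \ref{obs:dominant_uT(tu)}, so $A\le_{uT(tu)}B$ by Observation \ref{obs:wtt_invariance}. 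Conversely, by Lemma \ref{lem:range_uT(tu)} and Corollary \ref{cor:alternative_def} any $A$ witnessing the theorem already yields a dominant function wtt-below $B$, so nothing weaker than this claim would suffice; and since Corollary \ref{cor:high_1} delivers only \emph{some} such $B$, the full hypothesis must be used to go beyond highness.

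To produce the dominant function, note first that the hypothesis forces every $B\in\degr a$ to be high: $B'\ge_T B\oplus\emptyset'\ge_T\emptyset''$. By the method of Proposition \ref{prop:Martin_plus} (Martin's theorem) $B$ then Turing-computes a dominant function, but only with unbounded use, so the extra strength of the hypothesis must be exploited to bring the use down to a recursive bound. Fix a reduction $\Gamma^{B\oplus\emptyset'}=E$, where $E$ is a function computable from $\emptyset''$ — hence from $B\oplus\emptyset'$ — that dominates every $\emptyset'$-recursive function, in particular every recursive function. Define $g$ by a weak truth-table reduction of $B$ with use bounded by the identity: on oracle $B\rest n$, run $\Gamma$ on each input $m\le n$ for at most $n$ steps, feeding it $B\rest n$ on the $B$-side (abandoning a run that would query $B$ at a position $\ge n$) and the stage-$n$ approximation $\emptyset'_n$ on the $\emptyset'$-side, and let $g(n)$ be the largest value produced (or $0$). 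Then $g\le_{wtt}B$, and for each fixed $m$, once $n$ is large enough that $B\rest n$ contains the true $B$-use of $\Gamma$ at $m$, that $\emptyset'_n$ agrees with $\emptyset'$ below the true $\emptyset'$-use, and that $n$ exceeds the running time, the run on input $m$ reproduces $E(m)$; so $g(n)\ge E(m)$ from that point on.

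The heart of the matter is that the threshold $N(m)$ beyond which $g(n)\ge E(m)$ is, as just described, only $(B\oplus\emptyset')$-computable, so a naive argument yields merely that $g$ escapes each recursive function infinitely often, not that it dominates. The crux — and the step where I expect the genuine difficulty to lie — is to organise the enumeration of $\emptyset'$ and the choice of $\Gamma$, linking them with the high permitting used for Proposition \ref{prop:Martin_plus}, so that $N$ is bounded by a recursive (or at worst $\emptyset'$-recursive) function. Once this is achieved, $g(n)\ge E(N^{-1}(n))$ for all large $n$, and $E\circ N^{-1}$ is dominant because $N^{-1}$ is recursive (respectively $\emptyset'$-recursive) and $E$ dominates all recursive (respectively $\emptyset'$-recursive) functions; hence $g$ is dominant, and the theorem follows from the reduction above.
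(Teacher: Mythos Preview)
Your reduction of the theorem to the claim that every $B\in\degr a$ is wtt-equivalent to a dominant function is exactly what the paper does (this is its Proposition~\ref{prop:high_2}), and your coding of $B$ into $f$ to get the Turing equivalence is fine.

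The gap is precisely where you flag it. Your threshold $N(m)$ depends on the $B$-use of $\Gamma$ at input $m$, and there is no reason for this use to be bounded by anything recursive or even $\emptyset'$-recursive: all you know is $E\le_T B\oplus\emptyset'$, and the $B$-side of that reduction can query $B$ arbitrarily far out. So $N$ is genuinely only $(B\oplus\emptyset')$-recursive, and your argument would then need $E$ to dominate all $(B\oplus\emptyset')$-recursive functions, which is strictly more than the hypothesis gives. The suggestion to ``organise the enumeration of $\emptyset'$ and the choice of $\Gamma$'' does not help, because the obstruction lives on the $B$-side of the oracle, over which you have no control.

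The paper sidesteps this by not trying to approximate a single dominating function. Instead it uses $B\oplus K$ to compute $\Tot$, and from this builds a $B$-r.e.\ set $E\subseteq\Tot$ that contains an index for every total recursive function. The key device is a self-totalising index: whenever $\Phi^{B\oplus K_s}(e)\!\downarrow=1$ with use $u$ and $\sigma=K_s\rest u$, one enumerates into $E$ (with $B$-use $u$) an index $g(e,\sigma)$ for the function that simulates $\vphi_e$ as long as $\sigma\subset K_t$, but outputs $0$ everywhere as soon as the $K$-approximation changes below $u$. This guarantees every index in $E$ names a \emph{total} function, regardless of whether the $K$-guess was correct. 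Then $f(n)=\max\{\vphi_{e'}(n):e'\text{ enumerated into }E\text{ by stage }n\text{ with }B\text{-use}\le n\}$ is computable from $B\rest n$ and dominates every recursive $\vphi_e$, since once $K$ has settled below the true use, a correct $g(e,\sigma)$ enters $E$ with some fixed $B$-use, and thereafter contributes $\vphi_e(n)$ to the maximum. The $B$-use never needs to be bounded a priori; it is simply absorbed into the stage $n$.
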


\noindent
The point is that under the assumption that every $B\in \degr{a}$ is
wtt-equivalent to some dominant function $f$, we can let
$A$ be the graph of $f$. This means that we can prove the following
equivalent fact instead.

\begin{prop}\label{prop:high_2}
 If $\degr{a}$ is a Turing degree such that $\degr{a}\vee \degr{0}'
\ge_T \degr{0}''$, then every $B\in \degr{a}$ is wtt-equivalent to
some dominating function. 
\end{prop}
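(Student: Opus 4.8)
The plan is to start from an arbitrary $B\in\degr{a}$ and build a function $f\equiv_{wtt}B$ which dominates every recursive function, using the hypothesis $\degr{a}\vee\degr{0}'\ge_T\degr{0}''$ to obtain the domination property. The natural object to exploit is a $\degr{0}''$-recursive (equivalently, $\Sigma^0_2$-indexed) approximation to a dominant function. Recall that there is a recursive function $p$ such that $\seq{\varphi_{p(e)}}$ enumerates all \emph{total} recursive functions whose index lies in $\Tot=\{e:\varphi_e\text{ total}\}$, and $\Tot$ is $\Pi^0_2$, hence computable from $\degr{0}''$. First I would fix, using $\degr{0}''$, a function $m\le_T\degr{0}''$ with $m(n)=\max\{\varphi_e(n): e\le n,\ e\in\Tot\}$; this $m$ is dominant. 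Since $\degr{a}\vee\degr{0}'\ge_T\degr{0}''$, there is a functional $\Theta$ with $\Theta^{B\oplus K}=m$ (identifying $\degr{0}'$ with $K$).

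Next I would convert this $B\oplus K$-computation of a dominant function into a \emph{wtt}-computation from $B$ alone of a (possibly different) dominant function. The point is that $K$ has a recursive approximation $\seq{K_s}$, so $\Theta^{B\oplus K_s}(n)$ is a recursive-in-$B$ guess at $m(n)$ which, for each fixed $n$, changes only finitely often; but the number of changes is not recursively bounded, so $\Theta^{B\oplus K_s}(n)$ need not be \emph{wtt}-computable from $B$. To fix the use, I would instead define $f\le_{wtt}B$ by a ``limit with recursive patience'' trick: let $f(n)$ be the value $\Theta^{B\oplus K_{s}}(n)$ at the \emph{first} stage $s$ by which the use of $\Theta^{B\oplus K_s}(n)$ in the $B$-part has settled to its true value $B\rest u(n)$, where $u$ is a recursive bound on that $B$-use (such a $u$ exists because $\Theta$, as a Turing functional with a fixed convention, has recursive use on any total computation — or we may simply incorporate a recursive use bound into the definition of $\Theta$). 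More carefully, one defines $f(n)=\Theta^{B\rest u(n)\,\oplus\,K_{t}}(n)$ where $t$ is least such that $\Theta^{B\rest u(n)\,\oplus\,K_{t'}}(n)$ has stabilised for all $t'$ in $[t, t+n]$ (a bounded recursive search once $B\rest u(n)$ is known); this makes $f\le_{wtt}B$ via use $u$. One then argues $f=^* m$, or at least that $f$ still dominates every $\varphi_{p(e)}$, because for each $e$ the true value $m(n)\ge\varphi_e(n)$ (for $n\ge e$, $e\in\Tot$) is the eventual value of the approximation, and the recursive patience of length $n\to\infty$ eventually suffices to catch it; hence $f$ is dominant.

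Finally I would patch up the two remaining loose ends. To get $f\ge_{wtt}B$ as well (so $f\equiv_{wtt}B$, not merely $f\le_{wtt}B$), I would code $B$ into $f$ the same way the paper does in Proposition \ref{prop:Martin_plus}: replace $f(n)$ by $\hat f(n)=2f(n)+B(n)$; then $B\le_{wtt}\hat f$ trivially, $\hat f\le_{wtt}B$ since $\hat f\le_T f\oplus B$ with recursive use on each side, and $\hat f$ still dominates everything because $\hat f\ge f$. This $\hat f$ is the required dominating function wtt-equivalent to $B$; Theorem \ref{thm:high_2} then follows by taking $A$ to be the graph of $\hat f$, which is Turing equivalent to $\hat f$ hence in $\degr{a}$, and applying Observation \ref{obs:dominant_uT(tu)} together with Observation \ref{obs:wtt_invariance}. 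The main obstacle I anticipate is the middle step — making the passage from the $B\oplus K$-computation to a genuine \emph{wtt}-reduction from $B$ rigorous — since the naive approximation has unbounded use in $K$ and one must be careful that the bounded-patience search both terminates recursively (given $B\rest u(n)$) and provably recovers enough of the true dominant function; getting the quantifiers right there ("for each recursive $\varphi_e$, for almost all $n$, the patience window eventually exceeds the settling time") is where the real work lies.
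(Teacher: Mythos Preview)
Your approach has a genuine gap precisely where you flag the main obstacle, and it is more serious than you allow. You assume a recursive bound $u(n)$ on the $B$-use of $\Theta^{B\oplus K}(n)$, claiming either that a Turing functional ``has recursive use on any total computation'' or that one may ``simply incorporate a recursive use bound into the definition of $\Theta$.'' Both claims are false: the use function of a Turing reduction is recursive only in the oracle, not recursive outright, and imposing a recursive bound amounts to assuming $m\le_{wtt}B\oplus K$, which the hypothesis $\degr{a}\vee\degr{0}'\ge_T\degr{0}''$ does not give. Without such a $u$, your $f$ is not $\le_{wtt}B$. Even if one had such a $u$, the patience trick is not justified: you would need that for almost all $n$ the first length-$n$ plateau in $t\mapsto\Theta^{B\rest u(n)\oplus K_t}(n)$ already sits at the true value, but there is no recursive control over false plateaus in the $K$-approximation, so $f(n)$ may differ from $m(n)$ with no useful lower bound on infinitely many $n$, and nothing in your sketch establishes that the resulting $f$ is dominant.

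The paper takes a different route that avoids both problems. Instead of approximating a fixed dominant $m$, it uses a reduction $\Phi^{B\oplus K}=\Tot$ to enumerate, recursively in $B$, a set $E\subseteq\Tot$ of indices such that every total recursive function has an index in $E$: whenever $\Phi^{B\oplus K_s}(e)\cvg=1$ with use $u$, one enumerates (recording the $B$-use $u$) an index $g(e,K_s\rest u)$ for a function that copies $\varphi_e$ while $K_s\rest u$ remains an initial segment of the current $K$-approximation, and is made total by padding with zeros the moment this fails. Then $f(n)$ is defined as the maximum of $\varphi_e(n)$ over those $e$ enumerated into $E$ by stage $n$ with $B$-use at most $n$. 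This gives $f\le_{wtt}B$ with use $n$, since the only oracle queries are to $B\rest n$ and every $\varphi_{g(e,\sigma)}$ is total by construction; and $f$ is dominant because each genuinely total $\varphi_{e_0}$ eventually contributes a permanent index to $E$ with some fixed $B$-use $u_0$, so $f(n)\ge\varphi_{e_0}(n)$ once $n\ge u_0$ and $n$ exceeds the enumeration stage. Your final coding step $\hat f(n)=2f(n)+B(n)$ is correct and is exactly what the paper does.
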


\begin{proof}
 Let $B\in \degr{a}$, and let $\seq{\vphi_e}$ be an enumeration of all
partial recursive
functions. Then $\Tot$, the collection of all indices $e$ such that
$\vphi_e$ is total, has Turing degree $\degr{0}''$, so there is some
Turing reduction $\Phi^{B\oplus K}= \Tot$. 

 We show that there is a set $E\subseteq \Tot$ which is recursively
enumerable in $B$ and such that for every (total) recursive function
$g$ there is some $e\in E$ such that $g=\vphi_e$. The set $E$ is
enumerated as follows: at stage $s$, if $\Phi^{B\oplus K_s}(e)\cvg
=1$, then we enumerate $g(e,\s)$ into $E$ with use $B\rest u$, where
$u$ is the use of the computation, $\s= K_s\rest u$ and the
instructions for calculating $\vphi_{g(e,\s)}$ are as follows. We
emulate $\vphi_e$ as long as $\s$ is an initial segment of $K_t$ for
stages $t\ge s$,
waiting for computations to converge, but if at some stage we observe
that $\s$ is no longer an initial segment of $K$, we make
$\vphi_{g(e,\s)}$ total by immediately converging on all inputs for
which we have not yet given an output and giving the answer 0. The
function $g$ is thus recursive. 

 Now $E$ is used to construct a dominant function $f\le_{wtt} B$: we
let $f(n)$ be the maximum of the values $\vphi_e(n)$ for the $e$ that
are enumerated into $E$ by stage $n$ with $B$-use at most $n$. 

 Again, we can modify $f$ to be a dominant function $\hat f
\equiv_{wtt} B$ by coding $B$ into $\hat f$, say again by letting
$\hat f(n) = 2f(n) + B(n)$.  
\end{proof}

\noindent
We do not know much in general about the range of $\le_{T(tu)}$. We
give some partial results in the following subsections. 

\subsection{Recursively enumerable degrees}
The techniques of the previous subsection can be improved to yield the
following result.

\begin{theorem}\label{thm:r.e._range}
 For every nonrecursive r.e.\ set $B$ there is a nonrecursive r.e.\
set $A$ such that $A\le_{T(tu)} B$.
\end{theorem}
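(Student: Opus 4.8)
The plan is to build a nonrecursive r.e.\ set $A$ together with a sequence of Turing reductions of $A$ to $B$, one for each order function, all uniformly in $B$, while also meeting the usual non-triviality requirements $A\ne W_e$. The natural framework is a finite-injury (or Sacks-style) construction relative to the fixed r.e.\ oracle $B$: we run a recursive enumeration $\seq{B_s}$ of $B$, and whenever $B$ changes below some threshold we are permitted to move an element into $A$. The key idea, borrowed from the array-recursive r.e.\ degree machinery of Downey--Jockusch--Stob and Ishmukhametov, is that a \emph{nonrecursive} r.e.\ set $B$ cannot have a recursive ``modulus''; more precisely, for every order function $h$ one can, using the fact that $B$ is r.e.\ and nonrecursive, find infinitely many stages $s$ and positions $n$ at which $B$ changes below $h(n)$ after the ``obvious'' stage by which $A\rest n$ should have settled. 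This is what lets the use of the reduction of $A$ to $B$ be squeezed below any prescribed $h$.

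Concretely, I would fix a recursive list $h_0,h_1,h_2,\dots$ of order functions cofinal in the order functions under eventual domination (so that bounding the use by each $h_i$ suffices for $A\le_{T(tu)}B$ by Definition \ref{def:T(tu)}), and organize the construction around requirements $R_e$: ``$A\ne W_e$'', and $P_i$: ``the $i\tth$ reduction $\Phi_i^B=A$ has use bounded by $h_i$''. A follower $x$ appointed for $R_e$ will only be allowed to enter $A$ when $B$ changes below $h_i(x)$ for the relevant bookkeeping index, and we use a permitting argument: because $B$ is nonrecursive, it must change below any given recursive bound infinitely often on a suitable search, and standard combinatorics (the ``$B$ is not $h$-computably approximable'' lemma behind array recursiveness) guarantees enough such changes to let every requirement act. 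The reduction $\Phi_i^B$ is defined on the fly: to compute $A\rest n$ at stage $s$ we query $B_s\rest h_i(n)$, and we promise (and maintain) that any future enumeration into $A$ below $n$ is accompanied by a $B$-change below $h_i(n)$, so that the computation is correct once $B\rest h_i(n)$ has stabilized. Uniformity in $B$ is automatic since $B$ is the only oracle used.

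The main obstacle will be the \textbf{tension between permitting and the tiny-use constraint}: a follower $x$ for $R_e$ needs \emph{some} $B$-change below $h_i(x)$ to be allowed to diagonalize, but $h_i$ grows arbitrarily slowly, so such changes may be very rare, and different requirements $P_i$ (for different, faster-shrinking $h_i$) impose incomparable demands on the same elements of $A$. The resolution is to appoint followers large enough that, for each finitely many indices $i$ active below a given priority level, the relevant $h_i(x)$ are still large compared to the current ``activity region'' of $B$, and to interleave the requirements so that each $P_i$ only ever constrains cofinitely-bounded use; the non-recursiveness of $B$ then supplies, via the Downey--Jockusch--Stob/Ishmukhametov technique cited in the paragraph after the statement, infinitely many usable $B$-changes below each $h_i$. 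One must check that the injury to each $R_e$ is finite (only higher-priority followers and finitely many $P_i$-restraints injure it) and that each $\Phi_i^B$ ends up total and correct (each value $A(n)$ is rectified only finitely often, always together with a $B$-change below $h_i(n)$). I expect the verification of this last point — that the final $B$ yields, for every $i$, a genuine reduction with use below $h_i$ — to be the delicate bookkeeping, but it follows the pattern of the array-recursive constructions already invoked in the paper, so no fundamentally new idea is needed beyond careful organization of the priority tree and the follower-appointment strategy.
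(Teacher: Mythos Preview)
Your proposal has a genuine gap at its foundation. There is \emph{no} uniformly recursive list $h_0,h_1,\ldots$ of order functions cofinal from below under eventual domination: if there were, the discrete inverses $h_i^{-1}$ would form a uniformly recursive family dominating every order function, and then the order function $G(n)=1+\max_{i\le n}h_i^{-1}(n)$ would fail to be dominated by any $h_i^{-1}$. (The paper itself notes, in the proof of Proposition~\ref{prop:schnorr_triviality}, that an analogous list can only be computed from $K$.) This is not cosmetic: your enumeration of $A$ must be recursive, and the permitting threshold $\min_{i\le e}h_i(x)$ is computed inside that enumeration, so uniformity is essential. Nor can one replace the list by a single recursive ``permission function'' $k$ with each $x$ entering $A$ on a $B$-change below $k(x)$: if $k$ is bounded then $A$ is finite, while if $k$ is unbounded then the order function $n\mapsto \lfloor\tfrac12\max_{m\le n}k(m)\rfloor$ is exceeded by $k$ infinitely often, and the corresponding reduction cannot be repaired by finite hardcoding. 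So a permitting scheme driven purely by recursive data cannot, by itself, deliver tiny use for \emph{every} order.

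The paper's argument avoids this obstacle by bringing in a genuinely nonrecursive object. It does not list order functions; instead it fixes a single \emph{dominant} function $f$, obtained from a high r.e.\ degree $\degr{h}$ chosen (via cone-avoiding Sacks jump inversion) so that $B\not\le_T\degr{h}$, together with a well-behaved $\w$-r.e.\ approximation $\seq{f_s}$. One then sets $A=\{f_s(n):n\text{ enters }B\text{ at stage }s\}$. Given any order $h$, to decide $A(t)$ one finds the unique $n$ with $f_t(n)=t$: if $n<h(t)$ one queries $B(n)$, while if $n\ge h(t)$ then, since $f$ dominates $h^{-1}$, one has $f(n)>t$ for all but finitely many such $t$, so one simply waits for the approximation $f_s(n)$ to move and reads off the answer without consulting $B$ at all. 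Thus the dominance of $f$ is what handles all order functions simultaneously, replacing the impossible cofinal list. Nonrecursiveness of $A$ follows from $B\le_T A\oplus f$ together with $B\not\le_T f$. There is no priority or injury, and the Downey--Jockusch--Stob and Ishmukhametov machinery you invoke (which concerns array recursiveness and multiple permitting) plays no role here.
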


\noindent
In order to prove this theorem, we take dominant functions which have
decent approximations. Let $\degr{h}$ be a high r.e.\ Turing degree.
By standard manipulations, we can get a dominant function $f\le_T
\degr{h}$ with an approximation with the following ``nice'' properties.
\begin{itemize}
 \item The approximation is increasing: for all $n$ and $s$,
$f_{s}(n)\ge f_{s-1}(n)$.
 \item If $f_s(n)\ne f_{s-1}(n)$, then $f_s(n)=s$.
 \item For all $s$ there is at most one $n$ such that $f_s(n)=s$ (this
is done by delaying changes in the approximation).
\end{itemize}

\begin{proof}[Proof of Theorem \ref{thm:r.e._range}]
 Let $B$ be a nonrecursive r.e.\ set. By a standard cone-avoiding
addition to the Sacks jump inversion theorem, there is a high r.e.\
degree $\degr{h}$ which does not compute $B$. Let $f\le_T \degr{h}$ be
an $\w$-r.e.\ dominant function with an approximation $\seq{f_s}$ as
described above. Let $g$ be a recursive bound on the mind-change
number of the approximation $\seq{f_s}$.

Enumerate a set $A$ as follows: for all $n\in B$, if $n$ enters $B$
at stage $s$, enumerate $f_s(n)$ into $A$. 

Suppose that we want to compute $A$ from $B$. To find out if $t\in A$,
we first go to stage $t$ and see if $f_t(n)=t$ for some $n\le t$ ---
if not, then $t$ is certainly not in $A$. If so, then $t$ is in
$A$ if and only if for the unique $n=n(t)$ such that $f_t(n)=t$, $n$
enters $B$ at a later stage $s$ before $f_s(n)$ changes. This gives a
reduction of $A$ to $B$ with identity use. 

In fact, $A\le_{T(tu)} B$. The idea is the following. Suppose again
that we want to find out whether $t$ is in $A$ and that we find that
$f_t(n)=t$. If we knew that $f(n)>t$, in other words, that there is a
later stage $s$ at which we have $f_s(n)\ne f_t(n)$, then we could
wait for that stage and see if $n$ entered $B$ before that stage or
not. Of course, we cannot always do this, because it may happen that
$t=f_t(n)=f(n)$ (or else $A$ would be recursive, whereas later we
show it is not). But now suppose that $h$ is an order function and
that we want to reduce $A$ to $B$ with use bounded by $h$. Then if
$n=n(t)<h(t)$, then we can consult $B(n)$ as before to compute $A(t)$.
If $h(t)\le n$, then $h^{-1}(n)\ge t$; since $f$ is dominant, $f(n)>t$
except for finitely many $n$, so we can employ the second tactic of
waiting for $f_s(n)$ to change in order to compute $A(t)$. In the
second case we do not consult $B$ at all, so overall we get a
reduction with use bounded by $h$. 

 Finally, to show that $A$ is not recursive, we see that $B\le_T
A\oplus f$ and recall that $B\not\le_T f$. To find $B(n)$, we
calculate the least stage $t$ at which $f_t(n)=f(n)$; if $n\notin
B_t$, then $n\in B$ if and only if $t\in A$. 
\end{proof}

\noindent
Just as we did for $\le_{tt}$, we can apply the ``tiny use'' operator
to many-one reducibility and say that $A\le_{m(tu)} B$ if for every
order function $h$ there is a recursive function $f$ dominated by $h$
such that $A= f^{-1}B$. The previous proof can be slightly modified to
show that for every nonrecursive r.e.\ set $B$, there are an r.e.\ set
$\hat B$ which is wtt-equivalent to $B$ and a nonrecursive r.e.\ set
$A$ such that $A\le_{m(tu)} \hat B$. We simply enumerate $(n,m)$ into
$\hat B$ if $n$ is enumerated into $B$ at stage $s$ and $f_s(n)$ is
the $m\tth$ value we see for $f_t(n)$ by stage $s$, that is, $m =
|\{f_t(n)\,:\, t\le s\}|$. Then, given $t$, we can find $n$ and $m$;
then $t\in A$ if and only if $(n,m)\in \hat B$ and, as described
above, this can be done with tiny use. To get $B\equiv_{wtt}\hat B$
rather than just Turing equivalence, we use a function $f$ which is
$\w$-r.e., the existence of which is guaranteed by the proof of
Proposition \ref{prop:Martin_plus}.

\medskip
\noindent
Theorem \ref{thm:r.e._range} cannot be extended to all $\Delta^0_2$
sets, as Downey, Ng and Solomon
\cite{DS06} constructed a $\Delta^0_2$ set which has minimal wtt-degree. 

Finally, there is an r.e.\ set $B$ which has minimal tt-degree
\cite{Marchenkov}. For such $B$, there can be no nonrecursive
$A\le_{tt(tu)}B$. Thus we cannot improve $\le_{T(tu)}$ in the theorem
to $\le_{tt(tu)}$, or, in the comments after the proof, get $\hat
B\equiv_{tt} B$ rather than $\hat B\equiv_{wtt} B$. 

\subsection{Hyperimmune-freeness}
Most hyperimmune-free Turing degrees do not contain any set $B$ for
which there is a nonrecursive $A$ such that $A\le_{T(tu)} B$. For
example, a Turing degree which is both minimal and hyperimmune free
does not contain such sets because if $\deg_T(X)$ is
hyperimmune free and minimal, then $\deg_{wtt}(X)$ is also minimal, as
every nonrecursive $Y\le_{wtt} X$ is Turing equivalent to $X$ and thus
also wtt-equivalent to $X$. Similarly, if $X$ is Martin-L\"of random
and $\deg_T(X)$ is hyperimmune free, then every nonrecursive
$Y\le_{wtt} X$ is truth-table equivalent to a Martin-L\"of random
set and so cannot be anti-complex, so again we get that every
$A\le_{T(tu)}X$ is recursive. 

Thus in the realm of the hyperimmune-free degrees, generic sets (in
the sense of either recursive Sacks forcing or forcing with sets of
positive measure) do not compute nonrecursive sets with tiny use. 

On the other hand, there is a hyperimmune-free $B$ with a
nonrecursive $A\le_{T(tu)} B$. This follows from the hyperimmune-free
basis theorem and the following theorem.

\begin{theorem}\label{thm:pi01range}
 There is a $\Pi^0_1$-class with no recursive elements consisting of
sets $B$ for which there are nonrecursive sets $A$ such that $A\le_{m(tu)}B$.
\end{theorem}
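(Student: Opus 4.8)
The plan is to build a recursive tree $T\subseteq\{0,1\}^{<\w}$ whose infinite paths form the desired $\Pi^0_1$-class, where along each path $B$ we simultaneously (i) diagonalize against all partial recursive functions to guarantee $B$ is nonrecursive, and (ii) code a nonrecursive set $A$ into $B$ via a many-one reduction with tiny use. The coding idea should mirror the construction in Theorem \ref{thm:r.e._range}: we want to attach to $B$ a dominant-like function so that membership questions about $A$ are answered by looking at widely-spaced bits of $B$. Concretely, I would fix in advance a fast-growing recursive function, say $\ell(n)$, and reserve the bit $B(\ell(n))$ (or a short block of bits around $\ell(n)$) to carry information about $A\rest$ something; since $\ell$ is recursive and can be taken to grow faster than the inverse of any given order function $h$ (as in the discrete-inverse observations before Corollary \ref{cor:alternative_def}), the map witnessing $A\le_{m(tu)}B$ is just $n\mapsto \ell(k)$ for the appropriate $k$, which is recursive and dominated by every order function.

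The structure of the argument is then: first, describe the tree $T$. At levels that are not of the form $\ell(n)$, $T$ branches fully (both extensions allowed), so that the class has no recursive member — standard: we also thin $T$ so that for each $e$, along every path there is a ``diagonalization location'' where the path bit disagrees with $\vphi_e$ if $\vphi_e$ converges there, which can be arranged while keeping $T$ recursive and the class nonempty by a finite-injury-free (purely combinatorial) pruning. Second, at levels of the form $\ell(n)$, the bit is \emph{forced} to equal a fixed value $A(n)$ for a set $A$ that we also construct — but since we must output a \emph{class}, not a single $B$, the cleanest route is: let $A$ be any fixed nonrecursive, sufficiently generic/immune set defined outside the construction (e.g. a $\Delta^0_2$ or even just a fixed hyperimmune-free... no — simpler: fix $A$ to be \emph{any} nonrecursive set and note the class depends on $A$), and let $T$ consist of all strings $x$ such that for every $n$ with $\ell(n)<|x|$, $x(\ell(n))=A(n)$, \emph{and} the non-$\ell$ bits obey the diagonalization pruning. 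For this $T$ to be recursive we need $A$ recursive, which is absurd — so instead $A$ should not be fixed in advance but should itself be read off from $B$ at the $\ell$-levels, and the $\Pi^0_1$-class is the set of all $B$ that are ``internally consistent'' and nonrecursive-forcing; then every $B$ in the class has its \emph{own} $A=A_B$ (the set whose characteristic function is $n\mapsto B(\ell(n))$), which is nonrecursive precisely because the non-$\ell$ bits of $B$ force $B$ to be nonrecursive and $A_B\equiv_{wtt}$... no, we need $A_B$ itself nonrecursive.

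So the correct design: the diagonalization requirements must be met \emph{by the coded bits}, i.e. by $A_B$ itself, not by the filler bits. I would therefore let the $\ell$-levels carry the entire content of $B$ that matters and make the non-$\ell$ levels the ``padding'' that merely spaces things out: precisely, put $B(m)=0$ for all $m\notin\range\ell$, and let $T=\{x: (\forall m<|x|)\ m\notin\range\ell\Rightarrow x(m)=0,\ \text{and the subsequence }(x(\ell(n)))_n\text{ lies on a fixed recursive tree }S\text{ of nonrecursive-forcing paths}\}$. Here $S$ is a recursive subtree of $\{0,1\}^{<\w}$, with no recursive infinite path, having an infinite path (e.g. take $S$ so that $[S]$ is a nonempty $\Pi^0_1$-class with no recursive members — these exist). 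Then $T$ is recursive, $[T]\ne\emptyset$, $[T]$ has no recursive element (since $B\in[T]$ recursive would make $A_B\in[S]$ recursive), and for each $B\in[T]$ the map $n\mapsto\ell(n)$ is a recursive, tiny-use many-one reduction of $A_B$ to $B$: $n\in A_B\iff B(\ell(n))=1\iff \ell(n)\in B$, i.e. $A_B=\ell^{-1}(B)$, and $\ell$ is dominated by every order function if we chose $\ell$ growing fast enough (e.g. $\ell(n)=2^n$ works against every order function via its inverse $\log$, but to beat \emph{every} order we actually need: for each order $h$ there is a reduction dominated by $h$ — here the single recursive function $\ell$ with $\ell(n)\to\infty$ fast is \emph{one} reduction; we need $\ell$ dominated by every order $h$, which is impossible for a single $\ell$! so we need, for each $h$, a different reduction).

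Hence the genuine subtlety — and the main obstacle — is exactly this: a single $m$-reduction cannot have use dominated by \emph{every} order function, so $A\le_{m(tu)}B$ really requires a \emph{family} of reductions $f_h$, one per order $h$, and the coding must be robust enough that $A_B$ can be recovered from $B$ with arbitrarily slow use. The fix is to code each bit $A_B(n)$ not at a single location but to make it recoverable from $B\rest k$ for all large $k$ relative to $n$ — i.e., spread the information so that given any order $h$, reading $B\rest h(n)$ suffices to compute $A_B(n)$ for almost all $n$, and patch the finitely many exceptions. Concretely I would \emph{repeat} each coded bit: reserve infinitely many locations for $A_B(n)$, say the locations $\ell(\langle n,j\rangle)$ for all $j$, all forced to equal $A_B(n)$; then for a given order $h$, the reduction on input $n$ searches for the least $j$ with $\ell(\langle n,j\rangle)<h(n+c)$ and returns $B(\ell(\langle n,j\rangle))$; for almost all $n$ such a $j$ exists since $h$ is unbounded and the $n$-th block of locations is infinite, giving use below $h$ up to a shift, which suffices by the order-shifting argument in Lemma \ref{lem:order_doesnt_matter}. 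The tree $T$ enforces all these equalities, stays recursive, and $[T]$ remains a nonempty $\Pi^0_1$-class with no recursive members by the same $[S]$-argument (now $A_B$ is read off any one copy). The steps, in order: (1) fix a recursive tree $S$ with $[S]$ a nonempty $\Pi^0_1$-class of nonrecursive sets; (2) fix a recursive bijection $\langle\cdot,\cdot\rangle$ and a recursive injection $\ell$ listing the ``coding locations'' block-by-block with each block infinite and the blocks spread out; (3) define $T$ as the set of finite strings that are $0$ off the coding locations and whose restriction to the $n$-th block is constantly $A$-consistent with some string on $S$'s $n$-th coordinate; verify $T$ recursive and $[T]$ is a $\Pi^0_1$-class with no recursive member; (4) for each order $h$ exhibit $f_h$ with $A_B=f_h^{-1}B$ and $f_h$ dominated by $h$, handling the finite exceptional set; conclude $A_B\le_{m(tu)}B$ with $A_B$ nonrecursive. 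The main work is step (4)'s bookkeeping and confirming the block structure in (2) is compatible with keeping $T$ recursive.
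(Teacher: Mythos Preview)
Your step (4) has a genuine gap. The coding blocks $\{\ell(\langle n,j\rangle):j<\w\}$ for distinct $n$ are pairwise disjoint, so the function $p(n)=\min_j\ell(\langle n,j\rangle)$ is a recursive \emph{injection}. Any many-one reduction of $A_B$ to $B$ definable from your layout must send $n$ into the $n$-th block, hence $f_h(n)\ge p(n)$ for all $n$. But no injection is dominated by every order: if $p(n)\le h(n)$ for all $n\ge n_0$, then the $N-n_0+1$ distinct values $p(n_0),\dots,p(N)$ all lie in $[0,h(N)]$, forcing $h(N)\ge N-n_0$; so already $h(N)=\lfloor N/2\rfloor$ fails. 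Repeating each bit of $A_B$ infinitely often only supplies arbitrarily \emph{large} locations for $A_B(n)$, never a small one, and the sentence ``for almost all $n$ such a $j$ exists since $h$ is unbounded and the $n$-th block of locations is infinite'' is simply false. The same obstruction blocks even $A_B\le_{T(tu)}B$ for your $B$, since the Turing use needed to recover $A_B(n)$ is still at least $p(n)$.

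What is missing is a mechanism that makes the \emph{input} to the reduction large relative to the oracle query, rather than the other way round. The paper does this by bringing in an $\w$-r.e.\ dominant function $f$ (as in Theorem \ref{thm:r.e._range}): the set $A$ lives on numbers $t$ that are approximations $\pi(n,k)$ to $f(n)$, and the reduction sends such a $t$ back to the pair $(n,k)$, which is bounded by a recursive function of $n$. Since $f$ is dominant, $t=f(n)$ eventually exceeds any recursive function of $n$, so $(n,k)$ is tiny compared with $t$ --- exactly what tiny use requires. For the non-final approximations ($k<m(n)$), the value $B(n,k)$ is already determined by an r.e.\ condition, so $B$ need not be consulted at all; the $\Pi^0_1$-class is then the class of separators of a recursively inseparable pair derived from this setup. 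Your padding moves in the wrong direction; a dominant function (or something playing its role) appears to be essential.
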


\noindent
(We remark that it is already known that there is a $\Pi^0_1$-class
with no recursive elements which consists of anti-complex sets; for
example, there is one which consists of Schnorr trivial sets
\cite{FS08} and one which consists of sets $A$ such that $\deg_T(A)$
is r.e.\ traceable.)

\begin{proof}
 We imitate part of the the proof of Theorem \ref{thm:r.e._range}.
Again, let $\seq{f_s}$ be an $\w$-r.e.\ approximation for a dominant
function $f$ with the properties discussed above; say $g$ is a
recursive function which bounds the number of possible values
$m(n)=|\{f_{s}(n) : s<\w\} |$.

 For $n<\w$ and $k\le m(n)$, let $\pi(n,k)$ be the $k\tth$ value of
$f_s(n)$. Thus $\pi(n,m(n))=f(n)$. Now let $D = \{(n,k)\,:\, k<m(n)\}$
and $E =  \pi[D]$; both are r.e.\ sets. Furthermore, $E$ is nonrecursive
as $f\le_T D\le_T E$: for each $n$, we find $\pi(n,k)$ recursively in
$k$ and so, consulting $E$, determine if $k<m(n)$ or not. 

 We can thus split $E$ into a pair $E_0$ and $E_1$ of recursively
inseparable r.e.\ sets. Let $\mathcal P$ be a $\Pi^0_1$-class of sets
which separate $D_0 = \pi^{-1}E_0$ from $D_1 = \pi^{-1} E_1$ (sets
that contain $D_0$ and are disjoint from $D_1$). Let $B$ be any
element of $\mathcal P$ and let $A = \pi[B]$. Then $A$ separates
$E_0$ and $E_1$, so $A$ is not recursive. We claim that
$A\le_{T(tu)}B$, indeed that $A\le_{m(tu)} B$. 

 The argument is similar to that of the proof of Theorem
\ref{thm:r.e._range}. For any $t$, $t\in A$ if and only if $t$ is the
$k\tth$ value of $f_s(n)$ for some (unique) $n$ which can be
effectively obtained from $t$; the question is whether $(n,k)\in B$.
As before, if $h$ is any order function, then for all but finitely
many $t$, either $(n,k)<h(t)$ or $k<m(n)$. In the latter case,
$(n,k)\in D$, so we do not need to consult $B$ about $(n,k)$, as the
value $B(n,k)$ is decided by which one of $D_0$ or $D_1$ the pair
$(n,k)$ is enumerated into, a fact which is revealed to us with
sufficient patience.  
\end{proof}

%
%

\end{document}